\def\namedlabel#1#2{\begingroup
    #2%
    \def\@currentlabel{#2}%
    \phantomsection\label{#1}\endgroup
}
\renewcommand\[{\begin{equation}}\renewcommand\]{\end{equation}} 
\renewcommand\epsilon\varepsilon 
\renewcommand\phi\varphi 
\newenvironment{customcor}[1]
  {\innercustomcor}
  {\endinnercustomcor}
\newenvironment{customclm}[1]
  {\innercustomclm}
  {\endinnercustomclm}
\newenvironment{customthm}[1]
  {\innercustomthm}
  {\endinnercustomthm}
\newcommand\NN{\mathcal{N}}
\newcommand\ZZ{\mathbb{Z}} 
\newcommand\ab\allowbreak 
\newcommand\VV{\mathcal{V}}
\newcommand\Spec{\operatorname{Spec}}
\newcommand\colim{\operatorname{colim}}
\newcommand\CHt{\widetilde{\operatorname{CH}}}
\newcommand\Hom{\operatorname{Hom}}
\newcommand\id{\operatorname{id}}
\newcommand\res{\operatorname{res}}
\newcommand\cores{\operatorname{cores}}
\newcommand\Id{\operatorname{Id}}
\newcommand\Det{\operatorname{Det}}
\newcommand\Flag{\operatorname{Flag}}
\newcommand\eeta{\boldsymbol{\eta}}
\newcommand\Om{\operatorname{\Omega}}
\newcommand\Ab{\mathcal{A}b}
\newcommand\KMW{\underline{\operatorname{K}}^{MW}}
\newcommand\TT{\mathcal{T}}
\newcommand\AAA{\mathbb{A}}
\newcommand\rk{\operatorname{rk}}
\newcommand\CatM{\mathfrak{M}^{\operatorname{M}}}
\newcommand\CatMW{\mathfrak{M}^{\operatorname{MW}}}
\newcommand\kMW{\mathbf{K}^{\text{MW}}}
\providecommand{\keywords}[1]
{
  \small	
  \textbf{\textit{Keywords---}} #1
}
\providecommand{\Codes}[1]
{
  \small	
  \textbf{\textit{MSC---}} #1
}
\newcommand\PP{\mathbb{P}}
\newcommand\codim{\operatorname{codim}}
\theoremstyle{definition} 
\newtheorem{Def}{Definition}[section] 
\theoremstyle{plain} 
\newtheorem{Pro}[Def]{Proposition} 
\newtheorem{Lem}[Def]{Lemma} 
\newtheorem{Claim}[Def]{Claim} 
\newtheorem{The}[Def]{Theorem} 
\newtheorem{Cor}[Def]{Corollary} 
\theoremstyle{remark} 
\newtheorem{Exe}[Def]{Example} 
\newtheorem{Rem}[Def]{Remark} 
\newtheorem{Par}[Def]{} 
\title{Milnor-Witt Cycle Modules} 
\author{\sc Niels FELD\footnote{Address: Institut Fourier, 100 Rue des Mathématiques, Grenoble, France.}
\footnote{E-mail address: <niels.feld@univ-grenoble-alpes.fr>.}  } 
\date{Q4 2018} 
\begin{document} 

\maketitle 


\begin{abstract}
We generalize Rost's theory of cycle modules \cite{Rost96} using the Milnor-Witt K-theory instead of the classical Milnor K-theory. We obtain a (quadratic) setting to study general cycle complexes and their (co)homology groups. The standard constructions are developed: proper pushfoward, (essentially) smooth pullback, long exact sequences, spectral sequences and products, as well as the homotopy invariance property; in addition, Gysin morphisms for lci morphisms are constructed. We prove an
adjunction theorem linking our theory to Rost's. This work extends Schmid's thesis \cite{Schmid98}.
\end{abstract}

\keywords{Cycle modules, Milnor-Witt K-theory, Chow-Witt groups, A1-homotopy}

\Codes{14C17, 14C35, 11E81}
\tableofcontents

\section{Introduction}

\subsection*{Historical approach}
Back in the nineties, Rost developed the theory of cycle modules \cite{Rost96} which was used in the famous proof of Milnor conjecture by Voevodsky (see \cite{SusVoe00} or \cite{Voe03}). Indeed, a crucial step in the proof was the construction of the motive associated to some quadratic forms \cite{Rost98}. In \cite{Rost96}, a cycle module $M$ over a perfect field $k$ is the data of a $\ZZ$-graded abelian group $M(E)$ for every finitely generated field extension $E/k$, equipped with restriction maps, (finite) corestriction maps, a Milnor K-theory module action and residue maps $\partial$. These data are subject to some axioms $(r1a),\dots, (r3e)$ such that Milnor K-theory groups form a Rost cycle module. From the formal definitions, Rost produced a general theory that encompassed previous ones (Quillen K-theory and étale cohomology, for instance). Moreover, Rost's theory is fundamentally linked with the theory of motives: cycle modules can be realized geometrically. This can be illustrated by the following theorem.
\begin{customthm}{1}[Déglise]\cite{Deg03}\label{theseDeglise}
Let $k$ be a perfect field. The category of Rost cycle modules over $k$ is equivalent to the heart of the category of Voevodsky’s motives $\operatorname{DM}(k,\ZZ)$ with respect to the homotopy t-structure.
\end{customthm}
The idea of $\AAA^1$-homotopy theory, due to Morel and Voevodsky, was to apply techniques from algebraic topology to the study of schemes (the affine line $\AAA^1$ playing the role of the unit interval $[0,1]$). This idea gave rise to many important results and new categories, such as $\operatorname{DM}(k,\ZZ)$ mentioned above and $\operatorname{SH}(k)$, the stable homotopy category \cite{Morel99}. 
\par In his study of $\operatorname{SH}(k)$, Morel (in joint work with Hopkins) defined for a field $E$ the Milnor-Witt K-theory $\kMW_*(E)$ (see \cite[Definition 3.1]{Mor12}). This $\ZZ$-graded abelian group behaves in positive degrees like Milnor K-theory groups $\mathbf{K}_n^M(E)$, and in non-positive degrees like Grothendieck-Witt and Witt groups of quadratic forms $\operatorname{GW}(E)$ and $\operatorname{W}(E)$. The Milnor-Witt K-theory was used for solving some splitting problems for projective modules. For instance, generalizing ideas from the theory of Chow groups, one can use the Rost-Schmid complex of Morel to define the Chow-Witt groups $\CHt^*(X)$ for a smooth $k$-scheme $X$ (recalled in \ref{ClassicalChowWittGroups}), and the Euler class of a vector bundle of rank $r$ over $X$ (with a given trivialization of its determinant) as an element in $\CHt^r(X)$. When $X= \Spec A$ is a $2$-dimensional smooth affine variety and $r=2$, Barge and Morel proved in \cite{BargeMorel} that the Euler class associated to a projective module $P$ of rank $2$ over $A$ vanishes if and only if $P\simeq P'\oplus A$ for some projective module $P'$.

\subsection*{Current and future work}
In this paper, we develop a conjectured\footnote{See \cite[Remark 2.6]{Rost96} and \cite[Remark 5.37]{Mor12}.} theory that studies general cycle complexes $C^*(X,M,\VV_X)$ and their (co)homology groups $A^*(X,M,\VV_X)$ (called {\em Chow-Witt groups with coefficients}) in a quadratic\footnote{Or rather: symmetric bilinear, since we allow fields of characteristic 2.} setting. The general coefficient systems $M$ for these complexes are called Milnor-Witt cycle modules. The main example of such a cycle module is given by the Milnor-Witt K-theory (see Theorem \ref{KMWisModule}); other examples can be deduced from Claim \ref{ThmDeg}, Example \ref{ExampleColimit} or Theorem \ref{AdjunctionTheorem} (e.g. the representability of hermitian K-theory in $\operatorname{SH}(k)$ will lead to a MW-cycle module, associated with hermitian  K-theory). A major difference with Rost's theory is that the grading to be considered is not $\ZZ$ but the category of virtual bundles (or, equivalently, the category of virtual vector spaces), where a virtual bundle $\VV$ is, roughly speaking, the data of an integer $n$ and a line bundle $\mathcal{L}$ (see Appendix \ref{VirtualObj}).
\par  Intuitively, Milnor-Witt cycle modules are given by (twisted) graded abelian groups equipped with extra data (restriction, corestriction, $\KMW$-action and residue maps). The difficult part was to find good axioms mimicking Rost's and taking into account the twists naturally arising in the non-oriented setting (virtual vector bundles play a major role). One important difference with the construction of Rost cycle modules \cite[Definition 1.1]{Rost96} is that we only need a weakened rule \ref{itm:R1c}: we do not consider multiplicities (we will show in future work that formulas involving multiplicities do hold as a consequence of our axioms). Moreover, we had to add another rule in our definition in order to link our theory with the classical one later (see \ref{itm:R4a} and Section \ref{Adjunction}).
\par  Following \cite[§3]{Rost96}, we define the necessary operations needed  on: pushforwards, pullbacks, multiplication with units and the motivic Hopf map $\eeta$, and boundary maps. Since our rules do not handle multiplicities, we first define pullbacks only for (essentially) smooth morphisms. We prove the usual functoriality theorems and show how to compute the (co)homology groups in special cases.

\par We can summarize the results as follows:
\begin{customthm}{2}(See Section \ref{FiveBasic} and Section \ref{Compatibilities})
For any scheme $X$, any virtual bundle $\VV_X$ and any Milnor-Witt cycle module $M$, there is a complex $C^*(X,M,\VV_X)$ equipped with pushforwards, pullbacks, a Milnor-Witt K-theory action and residue maps satisfying the standard functoriality properties.
\end{customthm} 
This formalism generalizes Schmid's thesis \cite{Schmid98} and provides more details to Morel's work \cite{Mor12}, regarding twists by virtual vector bundles.
\par An example of an application is as follows. Consider the localization sequence:
\begin{center}

$\CHt_p(Z) \to \CHt_p(X) \to \CHt_p(X\setminus Z)$,
\end{center}
where $Z\subset  X$ is a closed subvariety. Milnor-Witt cycle modules can be used to extend this exact sequence on the left and right, this is done in Section \ref{MWComplexe}.
\par In Section \ref{Acyclicity}, we prove a result similar to the Gersten Conjecture: the cohomology groups $A^p(X,M,\VV_X)$ are trivial when $X$ is a smooth semi-local scheme, $\VV_X$ a virtual bundle over $X$ and $p>0$. The proof is akin to Rost's Theorem 6.1 \cite{Rost96} and follows the classical ideas of Gabber and Panin.


\par Moreover, we prove the much expected homotopy invariance property, framing our work in $\AAA^1$-homotopy theory:
\begin{customthm}{3}(see Theorem \ref{HomotopyInvariance})
Let $X$ be a scheme, $V$ a vector bundle over $X$, $\pi:V\to X$ the canonical projection and $\VV_X$ a virtual vector bundle over $X$. Then, for every $q\in \ZZ$, the canonical morphism
\begin{center}
$\pi^*:A^q(X,M,\VV_X)\to A^q(V,M,-\TT_{V/X}+\VV_V)$

\end{center}
is an isomorphism.
\end{customthm}
Our proof of this theorem does not follow Rost's ideas. Indeed, in his original paper \cite[§7]{Rost96}, Rost constructs a cycle module $A^q[\rho,M]$ from any cycle module $M$ and any scheme morphism $\rho:Q\to B$, then uses a spectral sequence involving this new cycle module to prove the theorem. We could not easily extend the construction to our setting for some reasons related to twists. However, following ideas of Déglise (see \cite[§2]{Deg14}), we define a coniveau spectral sequence that helps us to reduce to the known case (see \ref{itm:(H)}). Note that we could also prove the result by giving a homotopy inverse on the complex level (as Rost did in \cite[§8]{Rost96}; see also \cite[Theorem 4.38]{Mor12}).

\par In \cite[§12]{Rost96}, Rost defines pullback maps $f^*$ (or {\em Gysin morphisms}) for morphisms $f:X\to Y$ between smooth schemes. In Section \ref{GysinMorphisms}, we do the same but for a more general set of maps, the classical local complete intersection (lci) morphisms, between possibly singular schemes. We prove functoriality and base change theorems which will prove to be useful later when considering multiplicities.
\par In Section \ref{ProductSection}, we construct products in our setting. Using the Gysin morphisms, we define then an intersection product on the homology groups of smooth schemes. In particular, we recover 	the intersection product already defined for Chow-Witt groups (see \cite[§3.4]{Fasel18bis}). We will use these notions to study the multiplicities naturally arising. Note however that we do not define a tensor product between Milnor-Witt cycle modules (but it can be obtained from Claim \ref{ThmDeg} below).

\par In future work, we will prove that Milnor-Witt cycle modules have a geometric interpretation in $\operatorname{SH}(k)$ as expected:

\begin{customclm}{4} \label{ThmDeg}
Let $k$ be a perfect field. The category of Milnor-Witt cycle modules is equivalent to the heart of the Morel-Voevodsky stable homotopy category (with respect to the homotopy t-structure):
\begin{center}

$ \mathfrak{M}^{MW}_k \simeq {\operatorname{SH}(k)}^\heartsuit$.
\end{center}
\end{customclm}
This result generalizes Theorem \ref{theseDeglise} and answers affirmatively an old conjecture of Morel (see \cite[Remark 2.49]{Mor12}). Note that we already have a description of the heart ${\operatorname{SH}(k)}^\heartsuit$ in terms of homotopy modules (which are strictly $\AAA^1$-invariant Nisnevich sheaves with an additional structure defined over the category of smooth schemes, see \cite[§5.2]{Mor03} or \cite[§1]{Deg10}). Cycle modules may be easier to work with since they are defined over fields. An important corollary of Claim \ref{ThmDeg}\footnote{This result seems closely related to the work of Bachmann and Yakerson. Indeed, in \cite{BachmannYakerson18}, the authors construct (for a strictly homotopy invariant sheaf $M$, a smooth $k$-scheme $X$ and a natural number $q$) a cycle complex $C^*(X,M,q)$ which may be compared to the complex defined in Section \ref{FiveBasic}. 
} is the following result (which was proved independently by Ananyevskiy and Neshitov in \cite[Theorem 8.12]{Neshitov2018}):
\begin{customcor}{5}
Let $k$ be a perfect field. The heart of the Morel-Voevodsky stable homotopy category is equivalent to the heart of the category of MW-motives \cite{DegFas18} (both equipped with their respective homotopy t-structures):
\begin{center}
${\operatorname{SH}(k)}^\heartsuit \simeq {\widetilde{\operatorname{DM}}(k)}^\heartsuit$.

\end{center}
\end{customcor}

In a forthcoming work, we will generalize the theory of Milnor-Witt cycle modules over $k$ to any base scheme $S$. We will also define general flat pullbacks and study the associated multiplicities.

\subsection*{Outline of the paper}

\par In Section \ref{MainExemple}, we recall some known
 properties of the Milnor-Witt K-theory (residue maps, specialization maps, transfer maps, homotopy invariance). Our main references are the original work of Morel \cite[§2]{Mor12} and also work of Calmès-Fasel (see \cite{Fasel18bis}). This will constitute our central example of a Milnor-Witt cycle module.

\par In Section \ref{Premodules} and Section \ref{Modules}, we define the main objects of our theory: Milnor-Witt cycle modules. These are Milnor-Witt cycle premodules satisfying two axioms \ref{itm:FD} and \ref{itm:C} which allow the associated complexes to have well-defined differentials.
\par In Section \ref{FiveBasic} and Section \ref{Compatibilities}, we define the basic operations needed further on: pushforwards, pullbacks, multiplication by units and the motivic Hopf map $\eeta$, and boundary maps. We prove the basic compatibility properties for these operations.
\par In Section \ref{MWComplexe}, we define homology groups called {\em Chow-Witt groups with coefficients} and describe how they are related to the classical Chow-Witt groups. We show how to compute these groups in some special cases in Section \ref{Acyclicity} and Section \ref{HomotopyInvarianceSection}.

\par In Section \ref{GysinMorphisms}, we define Gysin pullbacks for regular closed immersions and lci morphisms. We then proceed to construct products on the level of complexes in Section \ref{ProductSection}.
\par In Section \ref{Adjunction}, we define a pair of functors between our category of Milnor-Witt cycle modules and Rost's category of classical cycle modules. We can infer from Claim \ref{ThmDeg} that this is an adjunction but, for completeness, we give an elementary proof of this fact (conditionally to Claim \ref{StrongR1c} whose proof is postponed to another paper).
\par Finally, in Appendix \ref{VirtualObj}, we show how to define the category of virtual objects associated to an exact category and recall its basic properties (see \cite[§4]{Deligne87}).

\section*{Notation}\label{Conventions}
Throughout the paper, we fix a (commutative) field $k$ and we assume moreover that $k$ is perfect (of arbitrary characteristic). We consider only schemes that are noetherian and essentially of finite type\footnote{That is, isomorphic to a limit of finite type schemes with affine \'etale transition maps.} over $k$. All schemes and morphisms of schemes are defined over $k$.
\par By a field $E$ over $k$, we mean {\em a finitely generated extension of fields $E/k$}.
\par We denote by $\mathbb{F}_k$ the category of fields over $k$ (with obvious morphisms). 
\par If $A$ is a commutative ring, denote by $\mathbb{V}(A)$ the category of projective $A$-modules of finite type and $\mathfrak{V}(A)$ the category of virtual projective $A$-modules of finite type (see \cite[§4]{Deligne87} which uses the notation $\underline{K}(A)$; see also Appendix \ref{VirtualObj} for more details). Recall that there is a {\em contravariant} equivalence functor from $\mathbb{V}(A)$ to the category of vector bundles over $X=\Spec A$ (we follow the conventions of \cite[§9]{EGA1}). We will sometimes go from one category to the other without mentioning this functor.
\par Let $f:X\to S$ be a morphism of schemes and $\VV_S$ be a virtual bundle over $S$. We denote by $\VV_X$ or by $f^*\VV_S$  or by $\VV_S\times_S X$ the pullback of $\VV_S$ along $f$.
\par Now let $\mathfrak{F}_k$ be the category whose objects are couples $(E, \mathcal{V}_E)$ where $E$ is a field over $k$ and $\mathcal{V}_E\in \mathfrak{V}(E)$ is a virtual vector space (of finite dimension over $E$). A morphism $(E,\mathcal{V}_E)\to (F, \mathcal{V}_F)$ is the data of a morphism $E\to F$ of fields over $k$ and an isomorphism $\mathcal{V}_E \otimes_E F \simeq \mathcal{V}_F$ of virtual $F$-vector spaces\footnote{This category satisfies a universal property over $\mathbb{F}_k$ (one could generalize \cite[§4]{Deligne87} for fibred categories).}.
\par A morphism $(E,\mathcal{V}_E)\to (F, \mathcal{V}_F)$ in $\mathfrak{F}_k$ is said to be finite (resp. separable) if the field extension $F/E$ is finite (resp. separable).
\par Let $F/E$ be a field extension, we denote by $\Om_{F/E}$ the $F$-vector space of relative (Kähler) differentials. We use the same notation to denote its canonical image in the category of virtual vector spaces. Dually, let $X/S$ be a scheme morphism, we denote by $\TT_{X/S}$ the sheaf of modules of differentials.
\par Let $E$ be a field (over $k$) and $v$ a (discrete) valuation on $E$. We denote by $\mathcal{O}_v$ its valuation ring, by $\mathfrak{m}_v$ its maximal ideal and by $\kappa(v)$ its residue field. We consider only valuations on $E$ of {\em geometric type}, that is we assume: $k\subset \mathcal{O}_v$, the residue field $\kappa(v)$ is finitely generated over $k$ and satisfies $\operatorname{tr.deg}_k(\kappa(v))+1=\operatorname{tr.deg}_k(E)$ (in particular, $E$ cannot be $k$).
\par Let $E$ be a field and $v$ be a valuation on $E$. We denote by $\NN_v$ the $\kappa(v)$-vector space $\mathfrak{m}_v/\mathfrak{m}_v^2$ and call it the {\em normal bundle of $v$}.
\par For $E$ a field (resp. $X$ a scheme), denote by $\AAA^1_E$ (resp. $\AAA^1_X$) the virtual vector space of dimension $1$ over $E$ (resp. the virtual affine space of rank one over $X$).

\section*{Acknowledgment}
I deeply thank my two PhD advisors Frédéric D\'eglise and Jean Fasel. I would also like to thank the referee for their many insightful comments and suggestions. This work received support from the French "Investissements d'Avenir" program, project ISITE-BFC (contract ANR-lS-IDEX-OOOB).

\section{Main Example: the Milnor-Witt K-theory}\label{MainExemple}

We describe the Milnor-Witt K-theory, as defined by Morel (see \cite[§3]{Mor12} or \cite[§1.1]{Fasel18bis}).

\begin{Def}\label{kMWdefinition}

Let $E$ be a field. The Milnor-Witt K-theory algebra of $E$ is defined to be the quotient of the free $\ZZ$-graded algebra generated by the symbols $[a]$ of degree $1$ for any $a\in E^\times$ and a symbol $\eeta$ in degree $-1$ by the following relations:
\begin{itemize}
\item $[a][1-a]=0$ for any $a\in E^{\times}\setminus\{1\}$.
\item $[ab]=[a]+[b]+\eeta[a][b]$ for any $a,b\in E^\times$.
\item $\eeta[a]=[a]\eeta$ for any $a\in E^\times$.
\item $\eeta(\eeta[-1]+2)=0$.
\end{itemize}
The relations being homogeneous, the resultant algebra is $\ZZ$-graded. We denote it by $\kMW_*(E)$.
\end{Def}

\begin{Par}
{\sc Notation} \label{NotationsKMW}  We will use the following notations.
\begin{itemize}
\item $[a_1,\dots, a_n]=[a_1]\dots [a_n]$ for any $a_1,\dots, a_n\in E^\times$.
\item $\langle a \rangle = 1 + \eeta[a]$ for any $a\in E^\times$.
\item $\epsilon=-\langle-1 \rangle$.
\item $n_\epsilon=\sum_{i=1}^n \langle (-1)^{i-1}\rangle$ for any $n\geq 0$, and $n_\epsilon=\epsilon(-n)_\epsilon$ if $n<0$.
\end{itemize}
\end{Par}

\begin{Par}{\sc Twisted Milnor-Witt K-theory} \label{TwistedMWKTheory}
Let $E$ be a field and $\VV_E$ a virtual vector bundle over $E$ with rank $n$ and determinant $\mathcal{L}_E$. The group $E^\times$ of invertible elements of $E$ acts naturally on $\mathcal{L}_E^\times$, the set of non-zero elements in $\mathcal{L}_E$ ; hence the free abelian group $\ZZ[\mathcal{L}^{\times}_E]$ is a $\ZZ[E^\times]$-module. Define
\begin{center}

$\KMW(E,\VV_E)=\kMW_{n}(E)\otimes_{\ZZ[E^\times]} \ZZ[\mathcal{L}_E^\times]$.
\end{center}

By abuse of notation, we might denote by $\eeta$ the element $(\eeta\otimes 1)\in \KMW(E,-\AAA^1)$ and by $[u]$ the element $([u]\otimes 1)\in \KMW(E,\AAA^1)$ (where $u\in E^\times$).
\par 
Let $\VV_E$ and $\mathcal{W}_E$ be two virtual bundles over $E$. The product of the Milnor-Witt K-theory groups induces a product
\begin{center}

$\KMW(E,\mathcal{V}_E)\otimes \KMW(E,\mathcal{W}_E)\to \KMW(E,\mathcal{V}_E+\mathcal{W}_E)$
\\ $(x\otimes l,x'\otimes l') \mapsto (xx')\otimes (l\wedge l')$,
\end{center}
so that $\KMW(E,-)$ is a lax monoidal functor from the category of virtual bundles over $E$ to the category of abelian groups.
\end{Par}
\begin{Par}{\sc Residue morphisms}  \label{KMWresidue} (see \cite[Theorem 3.15]{Mor12}) Let $E$ be a field endowed with a discrete valuation $v$. We choose a uniformizing parameter $\pi$. As in the classical Milnor K-theory, we can define a residue morphism
\begin{center}

$\partial^{\pi}_v:\kMW_*(E)\to \kMW_{*-1}(\kappa(v))$
\end{center}
commuting with the multiplication by $\eeta$ and satisfying the following two properties:
\begin{itemize}
\item $\partial^{\pi}_v([\pi,a_1,\dots, a_n])=[\overline{a_1},\dots, \overline{a_n}]$ for any $a_1,\dots,a_n\in \mathcal{O}_v^{\times}$.
\item $\partial^{\pi}_v([a_1,\dots, a_n])=0$ for any $a_1,\dots,a_n\in \mathcal{O}_v^{\times}$.
\end{itemize}
This morphism does depend on the choice of $\pi$. Indeed, if we consider another uniformizer $\pi'$ and write $\pi'=u\pi$ where $u$ is a unit, then we have $\partial^{\pi}_v(x)=\langle u \rangle \partial^{\pi'}_v(x)$ for any $x\in \kMW_*(E)$. Nevertheless, by making a choice of isomorphism $\NN_v\simeq \AAA^1_{\kappa(v)}$ (where $\NN_v=\mathfrak{m}_v/\mathfrak{m}_v^2$ is the normal cone of $v$), we can define a twisted residue morphism that does not depend on $\pi$:
\begin{center}

$\partial_v:\KMW(E,\mathcal{V}_E)\to \KMW(\kappa(v),-\NN_v+ \VV_{\kappa(v)})$
\\ $x\otimes l \mapsto \partial^{\pi}_v(x) \otimes (\bar{\pi}^*\wedge l)$
\end{center}
where $\VV$ is a virtual vector bundle over $\mathcal{O}_v$, $\bar{\pi}$ is the canonical projection of $\pi$ modulo $\mathfrak{m}_v$ and $\bar{\pi}^*$ its canonical associated linear form (see also Example \ref{ExGradedLines}).
\par We have the following proposition (see \cite[Lemma 3.19]{Mor12}).

\end{Par}
\begin{Pro} \label{KMWRuleR3a}

Let $j:E\subset F$ be a field extension, and let $w$ be a discrete valuation on $F$ which restricts to a discrete valuation $v$ on $E$ with ramification index $e$. Let $i:\kappa(v)\to \kappa(w)$ be the field extension of the residue fields. Let $\VV$ be a virtual vector bundle over $\mathcal{O}_v$. Then, we have a commutative diagram
\begin{center}
$\xymatrix{
\KMW(E,\VV_E) \ar[r]^-{\partial_v} \ar[d]_{j_*} & \KMW(\kappa(v),-\NN_v+\VV_{\kappa(v)}) \ar[d]^{e_\epsilon\cdot i_*} \\
\KMW(F,\VV_F) \ar[r]^-{\partial_w} & \KMW(\kappa(w),-\NN_w+\VV_{\kappa(w)}) 
}$
\end{center}
where the right vertical map ${e_\epsilon\cdot i_*}$ is defined by
\begin{center}

$x\otimes \bar{\rho}^* \otimes l \mapsto e_\epsilon i(x)\otimes \bar{\pi}^* \otimes l$
\end{center}
where $\pi$ is a uniformizer for $w$ and $\rho=\pi^e$ is a uniformizer for $v$.
\end{Pro}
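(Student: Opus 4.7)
The proposition is the twisted analogue of Morel's Lemma 3.19 in \cite{Mor12}, specialized by the simplifying choice $\rho=\pi^e$. My plan is to chase an arbitrary element $x \otimes l \in \KMW(E, \VV_E)$ around the square, separate the Milnor-Witt K-theory factor from the line-bundle (twist) factor, and reduce the whole statement to the untwisted identity
$$\partial_w^\pi \circ j_* \;=\; e_\epsilon \cdot i_* \circ \partial_v^\rho$$
in $\kMW_{*-1}(\kappa(w))$.

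Pushing $x \otimes l$ through the upper-right path yields $\partial_v^\rho(x) \otimes (\bar\rho^* \wedge l_{\kappa(v)})$ by the formula of \ref{KMWresidue}, then $e_\epsilon \cdot i_*(\partial_v^\rho(x)) \otimes (\bar\pi^* \wedge l_{\kappa(w)})$ by definition of the right vertical map (which, by construction, rewrites $\bar\rho^*$ as $\bar\pi^*$). Through the lower-left path one obtains $j_*(x) \otimes l_F$, then $\partial_w^\pi(j_*(x)) \otimes (\bar\pi^* \wedge l_{\kappa(w)})$. The twist factors therefore agree tautologically, by design of the right vertical map and the choice $\rho = \pi^e$. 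Hence commutativity of the full square is equivalent to the untwisted identity displayed above.

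To prove that identity I would check it on multiplicative generators of $\kMW_*(E)$. Since both sides commute with multiplication by $\eeta$, only pure symbols $[a_1, \ldots, a_n]$ with $a_i \in E^\times$ need be considered. Symbols entirely supported on units of $\mathcal{O}_v$ are killed by both residues. The essential case is $[\rho, a_1, \ldots, a_n]$ with $a_i \in \mathcal{O}_v^\times$, and the key algebraic input is the identity $[\pi^e] = e_\epsilon \cdot [\pi]$ in $\kMW_1(F)$; this follows by induction on $e$ from the defining relation $[ab] = [a] + [b] + \eeta[a][b]$ and the recursive formula for $n_\epsilon$ of Paragraph \ref{NotationsKMW}. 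Since $\rho = \pi^e$, one obtains $j_*([\rho, a_1, \ldots, a_n]) = e_\epsilon \cdot [\pi, a_1, \ldots, a_n]$, and taking the residue $\partial_w^\pi$ yields $e_\epsilon \cdot [\overline{a_1}, \ldots, \overline{a_n}]$ (using that $e_\epsilon$ commutes with $\partial_w^\pi$, because $e_\epsilon$ is a sum of $\langle u \rangle = 1 + \eeta[u]$ with $u$ a unit of $\mathcal{O}_w$, and residues vanish on symbols of units and commute with $\eeta$). This matches $e_\epsilon \cdot i_*(\partial_v^\rho([\rho, a_1, \ldots, a_n])) = e_\epsilon \cdot [\overline{a_1}, \ldots, \overline{a_n}]$ on the other side. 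More general symbols involving $\rho$ together with units reduce to this case by expanding $[\rho \cdot u] = [\rho] + [u] + \eeta[\rho][u]$ and applying the Steinberg-type manipulations of $\kMW_*$.

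The main obstacle is the conceptual coherence between the twist data and the scalar $e_\epsilon$. The identification $\bar\rho^* \mapsto \bar\pi^*$ on normal bundles is not the naive geometric map $\NN_v \otimes \kappa(w) \to \NN_w^{\otimes e}$ induced by $\mathfrak{m}_v \subset \mathfrak{m}_w^e$ (which would instead send $\bar\rho^*$ to $(\bar\pi^*)^{\otimes e}$); rather, it is a renormalization that collapses the $e$-th tensor power into the quadratic multiplicity $e_\epsilon$ on the K-theory side. The compatibility of these two conventions, precisely governed by $[\pi^e] = e_\epsilon \cdot [\pi]$, is the heart of the proof and explains geometrically why the factor $e_\epsilon$ appears in the right vertical map.
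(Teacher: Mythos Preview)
The paper does not give its own proof here: it simply records the proposition as the twisted form of Morel's Lemma~3.19 and cites \cite[Lemma~3.19]{Mor12}. Your proposal is therefore being compared against Morel's argument, and it matches it closely. The reduction of the twisted square to the untwisted identity $\partial_w^\pi \circ j_* = e_\epsilon \cdot i_* \circ \partial_v^\rho$ is exactly the right first move, and the key algebraic input $[\pi^e] = e_\epsilon[\pi]$ (Morel's Lemma~3.14) together with the defining properties of $\partial^\pi$ on symbols of units is precisely how Morel proceeds.

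One point worth tightening: when you say it suffices to check on $[u_1,\dots,u_n]$ and $[\rho,u_1,\dots,u_n]$, you are implicitly using that $\kMW_*(E)$ is additively generated by $\eeta^m$ times such symbols. This is true but not entirely formal; it is established in the course of Morel's proof of the existence/uniqueness of $\partial_v^\pi$ (Theorem~3.15), using $[\rho]^2 = [\rho][-1]$ to eliminate higher powers of $[\rho]$. Invoking that uniqueness clause directly is cleaner than the hand-wave ``Steinberg-type manipulations'': both sides of your untwisted identity are additive, commute with $\eeta$, kill symbols of units, and send $[\rho,u_1,\dots,u_n]$ to $e_\epsilon[\bar u_1,\dots,\bar u_n]$, and Theorem~3.15 says there is a \emph{unique} such map. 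Your final paragraph on the non-canonicity of the normal-bundle identification is a correct and useful remark; indeed, if one drops the hypothesis $\rho=\pi^e$ and writes $\rho=u\pi^e$, the formula for the right vertical map acquires an extra factor $\langle\bar u\rangle$.
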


\begin{Par}{\sc Specialization map} (see \cite[Lemma 3.16]{Mor12})
Keeping the previous notations, we also have a specialization map
\begin{center}

$s^{\pi}_v:\kMW_*(E)\to \kMW_*(\kappa(v))$
\end{center}
commuting with the multiplication by $\eeta$ and satisfying
\begin{center}
$s^{\pi}_v([\pi^{m_1}a_1,\dots,\pi^{m_n}a_n])=[\overline{a_1},\dots, \overline{a_n}]$
\end{center}
 for any $a_1,\dots, a_n\in \mathcal{O}_v^{\times}$ and any integers $m_1,\dots, m_n$. If $\pi'$ is another uniformizer, write $\pi'=\pi u$ where $u$ is a unit. Then, for any $x\in \kMW_*(E)$, we have $s^{\pi'}_v(x)=s^{\pi}_v(x)-[\bar{u}]\partial^{\pi'}_v(x)$.
\end{Par}
It is related with the residue map as follows:
\begin{center}
$s^{\pi}_v(\alpha)=\langle - 1 \rangle \partial^{\pi}_v([-\pi]\cdot \alpha)$

\end{center}  for any $\alpha \in \kMW_*(E)$. One can define a twisted specialization map (also denoted by $s^\pi_v$) as follows:
\begin{center}

$s^{\pi}_v:\KMW(E,\VV_E)\to \KMW(E,\VV_E)$
\\ $x\otimes l \mapsto s^{\pi}_v(x)\otimes l$.

\end{center}
It satisfies:
\begin{center}

$s^{\pi}_v=\Theta_\pi \circ \partial_v \circ \gamma_{[-\pi]}$
\end{center}
	where $\gamma_{[-\pi]}:\KMW(E,\VV_E)\to \KMW(E,\AAA_E^1+\VV_E)$ is the multiplication by $([-\pi]\otimes 1)$ and where $\Theta_\pi$ is induced by the composite of two isomorphisms $-\NN_v+\AAA^1_E\simeq \AAA^1_E+(-\NN_v)\simeq 0$ of virtual vector bundles (the first one is the canonical switch isomorphism; the second one comes from the choice of uniformizer $\pi$).

\begin{Par}{\sc The homotopy exact sequence} \label{KMWFD} Let $E$ be a field with a virtual vector bundle $\VV_E$, and let $E(t)$ be the function field of $E$ in the variable $t$. Any point $x$ of codimension 1 in $\AAA^1_E$ corresponds to a maximal ideal $\mathfrak{m}_x$ in $E[t]$, generated by a unique irreducible monic polynomial $\pi_x$ which is a uniformizing parameter of the $\mathfrak{m}_x$-adic valuation $v_x$ on $E(t)$. Additionally, the valuation $v_\infty$ at $\infty$ is given by $v_\infty(f/g)=\deg(g)-\deg(f)$ and we can choose (as in \cite[§4.2]{Mor12}) the rational function $-1/t$ as uniformizing parameter. For any $a \in E(t)^\times$, there is only a finite number of $x$ such that $v_x(a)\neq 0$. Furthermore, since the Milnor-Witt K-theory of $E$ is generated by elements of degree $1$ (and $\eeta$ in degree $-1$), it follows that the total residue map
\begin{center}

$d:\KMW(E(t),\Om_{E(t)/k}+ \VV_{E(t)})\to \bigoplus_{x\in (\AAA^1_E)^{(1)}} \KMW(E(x),\Om_{E(x)/k}+\VV_{E(x)})$
\end{center}
with $d=\sum_x \Theta_x\circ \partial_{v_x}$ is well-defined (where $\Theta_x$ comes from the canonical isomorphism $-\NN_{v_x}+\Om_{E(t)/k}\otimes_{E(t)}E(x)\simeq \Om_{E(x)/k}$ of virtual vector bundles).
The following theorem (see \cite[Theorem 3.24]{Mor12} for a non-twisted statement) is one of the most fundamental results of Milnor-Witt K-theory. Let ${i_*:\KMW(E,\AAA^1_E+\Om_{E/k}+\VV_E)\to \KMW(E(t),\Om_{E(t)/k}+\VV_{E(t)}})$ be the morphism induced by the field extension $E\subset E(t)$ and the canonical isomorphism $\AAA^1_{E(t)}+\Om_{E/k}\otimes_E E(t) \simeq \Om_{E(t)}$ of virtual vector bundles.
\end{Par}
\begin{The}[Homotopy invariance] 

 \label{KMWhmtpyInv} 
With the previous notations, the following sequence is split exact

\begin{center}

$\xymatrix@C=10pt@R=20pt{
0 \ar[r] &  \KMW(E,\AAA^1_E+\Om_{E/k}+\VV_E) \ar[r]^-{i_*}   & \KMW (E(t),\Om_{E(t)/k}+ \VV_{E(t)}) \ar[r]^-d  &  \bigoplus_{x\in {(\AAA_E	^1)}^{(1)}} \KMW(\kappa(x), \Om_{\kappa(x)/k}+\VV_{\kappa(x)}) \ar[r] & 0.
}
$

\end{center}

%
%
%

\end{The}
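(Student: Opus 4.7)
The plan is to reduce this twisted statement to Morel's untwisted Theorem 3.24 in \cite{Mor12} by choosing compatible trivializations of the determinant line bundles appearing in the twists. Recall that for any field $F$ over $k$ and any virtual bundle $\mathcal{W}_F$ of rank $r$ with determinant $\mathcal{M}_F$, the choice of a generator $m$ of $\mathcal{M}_F^\times$ yields an isomorphism $\KMW(F, \mathcal{W}_F) \cong \kMW_r(F)$ via $x \otimes m \mapsto x$; changing the generator multiplies this isomorphism by $\langle u \rangle$ for some $u \in F^\times$, which is a unit of $\kMW_0(F)$, so the choice does not affect exactness.

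First I would fix generators $\lambda$ of $(\det \VV_E)^\times$ and $\omega$ of $(\det \Om_{E/k})^\times$. The canonical decomposition $\Om_{E(t)/k} \simeq E(t)\,dt \oplus (\Om_{E/k} \otimes_E E(t))$ produces a generator $dt \wedge \omega$ of $\det \Om_{E(t)/k}$; together with the base-changed $\lambda_{E(t)}$, this trivializes the middle twist. At each closed point $x \in (\AAA^1_E)^{(1)}$ the chosen uniformizer $\pi_x$, combined with the canonical isomorphism $-\NN_{v_x} + \Om_{E(t)/k} \otimes_{E(t)} \kappa(x) \simeq \Om_{\kappa(x)/k}$ defining $\Theta_x$, gives a generator of $\det \Om_{\kappa(x)/k}$ out of $dt \wedge \omega$ and $\bar{\pi}_x^*$. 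Under these choices the three groups of the sequence are identified with the untwisted Milnor-Witt K-theory groups $\kMW_{n+d+1}(E)$, $\kMW_{n+d+1}(E(t))$ and $\kMW_{n+d}(\kappa(x))$ respectively, where $n = \rk \VV_E$ and $d = \operatorname{tr.deg}_k(E) = \rk \Om_{E/k}$.

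Next I would check that under these identifications, $i_*$ corresponds (up to anticommutation factors $\epsilon$ in $\kMW_0$ arising from reordering the $\AAA^1_E$ factor with $\Om_{E/k}$ inside $\Om_{E(t)/k}$) to the untwisted restriction $\kMW_{n+d+1}(E) \to \kMW_{n+d+1}(E(t))$, and that each component of $d$ at $x$ corresponds to the untwisted residue $\partial^{\pi_x}_{v_x}$ followed by the canonical extension to $\kappa(x)$. After these identifications the twisted sequence becomes the pre- and post-composition by diagonal unit scalars $\langle u \rangle$ of Morel's sequence $0 \to \kMW_{n+d+1}(E) \to \kMW_{n+d+1}(E(t)) \to \bigoplus_x \kMW_{n+d}(\kappa(x)) \to 0$, which is split exact by Theorem 3.24 of \cite{Mor12}; a splitting of the twisted sequence is transported back from Morel's splitting at $v_\infty$.

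The main obstacle will be the careful bookkeeping of the sign factors $\epsilon = -\langle -1 \rangle$ and the units $\langle u \rangle$ that appear each time one anticommutes two factors of a virtual determinant or passes from one generator of a determinant line to another, especially when comparing the canonical isomorphism used in defining $\Theta_x$ with the trivializations chosen on $\det \Om_{E(t)/k}$ and $\det \VV_{E(t)}$. Since all these factors are units in $\kMW_0$ and hence act invertibly on every $\kMW_*$-module, none of them affects exactness or splitness; nevertheless their tracking is the only nontrivial aspect of the argument.
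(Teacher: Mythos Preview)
Your approach is correct and is exactly what the paper intends: the paper's own proof consists solely of the citations ``See \cite[Theorem~3.24]{Mor12}, \cite[Proposition~1.20]{Fasel18bis}'', and your argument spells out the routine reduction of the twisted statement to Morel's untwisted one via trivializations of the determinant lines. Your observation that all discrepancies are units of the form $\langle u\rangle\in\kMW_0$, hence irrelevant to exactness and splitness, is precisely the point.
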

\begin{proof}
See \cite[Theorem 3.24]{Mor12}, \cite[Proposition 1.20]{Fasel18bis}.
\end{proof}

\begin{Par}{\sc Transfer maps}
\label{kMWTransfert}
Let $\phi:E\to F$ be a monogeneous finite field extension with $\VV_E$ a virtual vector bundle over $E$ and choose $x\in F$ such that $F=E(x)$. The homotopy exact sequence implies that for any $\beta\in \KMW(F,\Om_{F/k}+\VV_{F})$ there exists ${\gamma\in \KMW(E(t),\Omega_{E(t)/k}+\VV_{E(t)})}$ with the property that $d(\gamma)=\beta$. Now the valuation at $\infty$ yields a morphism
  ${\partial_\infty^{-1/t}:\KMW(E(t),\Omega_{E(t)/k}+\VV_{E(t)})\to
   \KMW(E,\Omega_{E/k}+\VV_E)}$ which vanishes on the image of $i_*$. We denote by $\phi^*(\beta)$ or by $\operatorname{Tr}^F_E(\beta)$ the element $-\partial^{-1/t}(\gamma)$; it does not depend on the choice of $\gamma$. This defines a group morphism
  \begin{center}
  
  $\phi^*:\KMW(F,\Omega_{F/k}+\VV_F)\to \KMW(E,\Omega_{E/k}+\VV_E)$
  \end{center}
  called the {\em transfer map} and also denoted by $\operatorname{Tr}^F_E$. The following result completely characterizes the transfer maps.

\end{Par}
\begin{Lem} Keeping the previous notations, let
  \begin{center}
  
  $d:\KMW(E(t),\Omega_{F(t)/k}+\VV_{E(t)})\to
   (\bigoplus_x\KMW(E(x),\Omega_{E(x)/k}+\VV_{E(x)}))\oplus \KMW(E,\Omega_{E/k}+\VV_E)$
  \end{center}
  be the total twisted residue morphism (where $x$ runs through the set of monic irreducible polynomials in $E(t)$). Then, the transfer maps $\operatorname{Tr}^{E(x)}_E$ are the unique morphisms such that $\sum_x ( \operatorname{Tr}^{E(x)}_E\circ d_x)+d_\infty=0$.
  \end{Lem}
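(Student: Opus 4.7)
The plan is to verify both existence (the stated equation holds for the transfer maps defined earlier) and uniqueness (the equation determines them) using the homotopy exact sequence of Theorem~\ref{KMWhmtpyInv} together with the property that $\partial_\infty^{-1/t}$ vanishes on the image of $i_*$.

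For existence, I take an arbitrary $\gamma \in \KMW(E(t),\Om_{E(t)/k}+\VV_{E(t)})$ and let $S$ be the finite set of closed points $x\in(\AAA^1_E)^{(1)}$ where $d_x(\gamma)\neq 0$. For each such $x$, the surjectivity in Theorem~\ref{KMWhmtpyInv} allows me to choose a lift $\gamma_x \in \KMW(E(t),\Om_{E(t)/k}+\VV_{E(t)})$ with $d(\gamma_x) = d_x(\gamma)$ (viewed as an element of the direct sum supported at $x$). By the very definition of the transfer map, $\operatorname{Tr}^{E(x)}_E(d_x(\gamma)) = -\partial_\infty^{-1/t}(\gamma_x)$. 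Summing over $x$ gives
\[
\sum_{x\in S} \operatorname{Tr}^{E(x)}_E(d_x(\gamma)) \;=\; -\partial_\infty^{-1/t}\Bigl(\sum_{x\in S} \gamma_x\Bigr).
\]
Now $\sum_x \gamma_x - \gamma$ is annihilated by the total finite-residue map $d$ (checking component by component), so by exactness in Theorem~\ref{KMWhmtpyInv} it lies in the image of $i_*$; since $\partial_\infty^{-1/t}\circ i_*=0$, one concludes $\partial_\infty^{-1/t}(\sum_x\gamma_x)=\partial_\infty^{-1/t}(\gamma)=d_\infty(\gamma)$. Substituting gives precisely $\sum_x \operatorname{Tr}^{E(x)}_E(d_x(\gamma))+d_\infty(\gamma)=0$.

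For uniqueness, suppose $T_x$ is any family of homomorphisms satisfying $\sum_x(T_x\circ d_x)+d_\infty=0$. Given any $\beta \in \KMW(E(x),\Om_{E(x)/k}+\VV_{E(x)})$, the surjectivity clause of Theorem~\ref{KMWhmtpyInv} produces a $\gamma\in \KMW(E(t),\Om_{E(t)/k}+\VV_{E(t)})$ with $d_x(\gamma)=\beta$ and $d_y(\gamma)=0$ for $y\neq x$. Applying the identity $\sum_y(T_y\circ d_y)+d_\infty=0$ to $\gamma$ collapses all but the $x$-term, yielding $T_x(\beta)=-d_\infty(\gamma)$. The same formula is satisfied by $\operatorname{Tr}^{E(x)}_E$, so $T_x=\operatorname{Tr}^{E(x)}_E$.

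The one step that requires genuine care (rather than formal manipulation) is the compatibility of the twists: the residue $\partial_{v_x}$ and the map $\partial_\infty^{-1/t}$ land in groups twisted by $-\NN_{v_x}$ and $-\NN_\infty$ respectively, which must be identified with the $\Om_{E(x)/k}$- and $\Om_{E/k}$-twists via the isomorphisms $\Theta_x$ and the canonical trivialisation coming from the uniformizer $-1/t$ at infinity. I would check once and for all that these identifications are already built into the statement of Theorem~\ref{KMWhmtpyInv} and the definition of the transfer map, so that the argument above goes through verbatim; I do not expect any further obstruction.
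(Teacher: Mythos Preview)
Your argument is correct and is exactly the natural proof from the definitions: existence is forced by the exactness of the homotopy sequence together with $\partial_\infty^{-1/t}\circ i_*=0$, and uniqueness follows from surjectivity of $d$ onto each summand. The paper itself gives no proof beyond citing \cite[\S4.2]{Mor12}; what you have written is essentially the argument one finds there, so there is nothing to compare.
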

  \begin{proof}
  
  See \cite[§4.2]{Mor12}.
  \end{proof}
  Let $\phi:E\to F$ be a finite field extension. We can find a factorization
  \begin{center}
  
  $E=F_0\subset F_1\subset \dots \subset F_n=F$
  \end{center}
  such that $F_i/F_{i-1}$ is monogeneous for any $i=1,\dots,n$. We set $\operatorname{Tr}^F_E=\operatorname{Tr}^{F_1}_E\circ \dots \circ \operatorname{Tr}^F_{F_{n-1}}$. Morel proved (see \cite[Theorem 4.27]{Mor12}) that the definition does not depend on the choice of the factorization (thanks to the twists).


The pullback $\operatorname{Tr}^F_E$ is functorial by definition. It satisfies the usual projection formulae and the following base change theorem.

\begin{Pro}[Base change] \label{KMWR1c}

 Let $\phi:(E,\VV_E)\to (F,\VV_F)$ and $\psi:(E,\VV_E)\to (L,\VV_L)$ with $\phi$ finite and $\psi$ separable. Let $R$ be the ring $F\otimes_E L$. For each $p\in \Spec R$, let $\phi_p:(L,\VV_L)\to (R/p, \VV_{R/p})$ and $\psi_p:(F,\VV_F)\to (R/p, \VV_{R/p})$ be the morphisms induced by $\phi$ and $\psi$. One has
\begin{center}
$\psi_*\circ \phi^*=\displaystyle \sum_{p\in \Spec R} (\phi_p)^*\circ \Theta_p \circ (\psi_p)_*$
\end{center}
where $\Theta_p$ is induced by the canonical isomorphism $\Om_{F/E}\otimes_F (R/p)\simeq \Om_{(R/p)/L}$ (since $\psi$ is separable).
\end{Pro}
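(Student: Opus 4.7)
The plan is to adapt the classical Bass--Tate / Kato argument for the analogous base change formula in Milnor K-theory to the twisted Milnor--Witt setting. The fundamental input is the characterization of transfer maps via the total residue morphism on $\AAA^1_E$ (the lemma immediately preceding the proposition), together with the ramification formula for residues (Proposition \ref{KMWRuleR3a}).

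First I would reduce to the case where $\phi:E\to F$ is monogeneous. A general finite extension factors as a tower of monogeneous extensions, and both $\psi_*\circ\phi^*$ and the right-hand side are multiplicative along such towers: the tensor product $F\otimes_E L$ decomposes iteratively, and the twist $\Theta_p$ for a composite coincides with the composite of the intermediate $\Theta$'s via the cotangent exact sequence, which splits thanks to the separability of $\psi$.

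In the monogeneous case, write $F=E(x)=E[t]/(f)$ for $f\in E[t]$ irreducible monic. Then $R=L[t]/(f)$, and the primes $p\in\Spec R$ correspond bijectively to the monic irreducible factors $f_p$ of $f$ in $L[t]$, with $R/p=L[t]/(f_p)$. For $\alpha\in\KMW(F,\Om_{F/k}+\VV_F)$, the homotopy exact sequence (Theorem \ref{KMWhmtpyInv}) produces a lift $\gamma\in\KMW(E(t),\Om_{E(t)/k}+\VV_{E(t)})$ with $\partial_{v_f}(\gamma)=\alpha$ (modulo the canonical twist) and all other finite residues zero; by definition, $\phi^*(\alpha)=-\partial_\infty^{-1/t}(\gamma)$. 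Now push $\gamma$ forward along the separable extension $E(t)\subset L(t)$ via the natural map $j_*:\KMW(E(t),\ldots)\to\KMW(L(t),\ldots)$ induced by $\psi$. Since $\psi$ is separable, every valuation on $L(t)$ is unramified over its restriction to $E(t)$, so Proposition \ref{KMWRuleR3a} (with ramification index $e=1$, $e_\epsilon=1$) gives $\partial_{v_{f_p}}(j_*\gamma)=(\psi_p)_*(\alpha)$ up to the canonical twist, while all other residues (other than at $\infty$) vanish, and $\partial_\infty(j_*\gamma)=j_*\partial_\infty(\gamma)=-\psi_*\phi^*(\alpha)$. Applying the characterization lemma to $j_*\gamma\in\KMW(L(t),\Om_{L(t)/k}+\VV_{L(t)})$ yields
\[
\sum_{p}(\phi_p)^*\circ\Theta_p\circ(\psi_p)_*(\alpha)+\partial_\infty(j_*\gamma)=0,
\]
which rearranges to the desired identity $\psi_*\phi^*(\alpha)=\sum_p(\phi_p)^*\circ\Theta_p\circ(\psi_p)_*(\alpha)$.

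The main obstacle is the precise bookkeeping of the twist isomorphisms. The $\Theta_p$ in the statement is defined by the canonical identification $\Om_{F/E}\otimes_F(R/p)\simeq\Om_{(R/p)/L}$, available because $\psi$ is separable. One has to verify that this identification agrees with the composite of the normal-bundle twists $-\NN_{v_f}+\Om_{E(t)/k}\simeq\Om_{F/k}$ and $-\NN_{v_{f_p}}+\Om_{L(t)/k}\simeq\Om_{(R/p)/k}$ with the flat base change $\Om_{E(t)/k}\otimes_{E(t)}L(t)\simeq\Om_{L(t)/k}$ (again using the separability of $\psi$ to trivialize $\Om_{L/E}$). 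Once this diagram of virtual bundles is checked to commute, the equality of the scalar maps upgrades to an equality of twisted maps, which is precisely what is needed.
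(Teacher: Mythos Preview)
Your argument is correct, and it is more detailed than what the paper itself provides: the paper's proof consists entirely of a citation to \cite[Corollaire 12.3.7]{Fasel13} and gives no argument of its own. What you sketch is the standard Bass--Tate/Kato d\'evissage, carried out in the twisted Milnor--Witt setting, and it is a legitimate self-contained route to the result.

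A few remarks on the details. Your claim that every geometric valuation on $L(t)$ over $L$ is unramified over its restriction to $E(t)$ is in fact correct (not just for $v_f$): for any irreducible $g\in E[t]$, the ring $(E[t]/(g))\otimes_E L=L[t]/(g)$ is reduced because $L/E$ is separable, so $g$ is squarefree in $L[t]$. Thus Proposition~\ref{KMWRuleR3a} applies with $e_\epsilon=1$ throughout, exactly as you use it. For the reduction to the monogeneous case, the only point worth making explicit is that if $E\subset E'\subset F$ and $L_j$ is a residue field of $E'\otimes_E L$, then $F\otimes_{E'}L_j$ is a direct factor of $F\otimes_E L$ and is therefore again reduced; this is what makes the inductive step go through on the right-hand side, and it also ensures that the $\Theta_p$'s compose correctly via the transitivity triangle for relative differentials. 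Your final paragraph correctly identifies the one genuinely delicate verification, namely that the canonical isomorphism $\Om_{F/E}\otimes_F(R/p)\simeq\Om_{(R/p)/L}$ matches the composite of the normal-bundle identifications appearing in the residue maps; this is a diagram chase in $\mathfrak{V}(\kappa)$ using the conormal exact sequences for $v_f$ and $v_{f_p}$ together with the base-change isomorphism $\Om_{E(t)/E}\otimes_{E(t)}L(t)\simeq\Om_{L(t)/L}$.
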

\begin{proof}

This follows from general base change theorem on the Milnor-Witt K-theory (see \cite[Corollaire 12.3.7]{Fasel13}). 
\end{proof}
Finally, transfer maps are compatible with residue maps in the following sense.
\begin{Pro} \label{KMWRuleR3b}
 Let $E\to F$ be a finite extension of fields over $k$, let $v$ be a valuation on $E$ and let $\VV$ be a virtual $\mathcal{O}_v$-module. For each extension $w$ of $v$, we denote by ${\phi_w: (\kappa(v), \VV_{\kappa(v)})} \to ( \kappa(w), \VV_{\kappa(w)})$ the morphism induced by $\phi:(E,\VV_E)\to (F,\VV_F)$. We have:
\begin{center}
$\partial_v \circ \phi^*= \sum_w \phi_w^* \circ \partial_w$.
\end{center}

\end{Pro}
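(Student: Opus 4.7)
The strategy is to reduce to the monogenic case and then exploit the definition of the transfer through the homotopy exact sequence of Theorem \ref{KMWhmtpyInv}. First, one uses the factorization $E = F_0 \subset F_1 \subset \dots \subset F_n = F$ into monogenic extensions appearing in the construction of $\operatorname{Tr}^F_E$, noting that every extension $w$ of $v$ to $F$ restricts along the chain to a tower of valuations $w = w_n \mid w_{n-1} \mid \dots \mid w_1 \mid v$. Summing over all such towers, the compatibility for $\phi$ follows from the compatibility for each monogenic step $F_{i-1} \subset F_i$ iterated, together with functoriality of $\partial$ at each level. So we may assume $F = E(\bar x)$ for a root $\bar x$ of some monic irreducible $p(t) \in E[t]$.

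In this monogenic case, pick any $\beta \in \KMW(F, \Om_{F/k} + \VV_F)$ and a lift $\gamma \in \KMW(E(t), \Om_{E(t)/k} + \VV_{E(t)})$ such that $d(\gamma)$ is concentrated at the closed point $\bar x \in (\AAA^1_E)^{(1)}$ and equals $\beta$ there; by definition $\phi^*(\beta) = -\partial_\infty^{-1/t}(\gamma)$. The plan is to work on the relative affine line $\AAA^1_{\mathcal{O}_v}$: its codimension-one points are on the one hand the horizontal divisors (given by the closed points of $\AAA^1_E$, whose local rings carry extensions of $v$ to $E(t)$), and on the other hand the single vertical divisor given by the generic point of the special fiber $\AAA^1_{\kappa(v)}$, which corresponds to the Gauss extension $\tilde v$ of $v$ to $E(t)$ with residue field $\kappa(v)(t)$. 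Applying the homotopy exact sequence over $\kappa(v)$ to the image $s^{\pi}_{\tilde v}(\gamma) \in \KMW(\kappa(v)(t), \Om_{\kappa(v)(t)/k} + \VV_{\kappa(v)(t)})$ and computing its residue at $\infty$ will produce $\partial_v(\phi^*(\beta))$ on the one side; on the other side, the residues of $\gamma$ at the closed points of $\AAA^1_{\mathcal{O}_v}$ lying above $\bar x$ correspond exactly (via factorization of $p(t)$ modulo $\mathfrak m_v$) to the extensions $w$ of $v$ to $F = E(\bar x)$, and after invoking the ramification formula of Proposition \ref{KMWRuleR3a} and the characterization of the transfer via $-\partial_\infty$, they contribute $\sum_w \phi_w^* \circ \partial_w(\beta)$. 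The identity then follows from the reciprocity-type relation coming from the homotopy exact sequence on $\AAA^1_{\kappa(v)}$ applied to the specialization of $\gamma$.

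The main obstacle will not be the Milnor K-theoretic skeleton of the argument (which is classical, following Bass--Tate and Kato) but rather the careful tracking of the virtual vector bundle twists throughout: each residue is twisted by $-\NN$ of the corresponding valuation, each specialization and each isomorphism $\Theta$ depends on a choice of uniformizer, and one must verify that the composition of these canonical identifications on $\AAA^1_{\mathcal{O}_v}$ produces consistent orientation data on both sides of the equation. Concretely, one has to check that the wedge product of the images modulo maximal ideals of the chosen uniformizers (for $v$ on $E$, for $\tilde v$ and $v_{\bar x}$ on $E(t)$, and for $w$ on $F$) compose into the same element of $\det(\NN_v \otimes_{\mathcal{O}_v} \mathcal{O}_w)$ up to the unit accounted for by the ramification index $e_\epsilon$. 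Once these bookkeeping points are settled, the statement follows directly, as shown in \cite[Corollaire 12.3.8]{Fasel13}.
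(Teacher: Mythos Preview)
Your proposal is a correct outline, but it takes a genuinely different route from the paper's own proof. The paper does not compute anything directly: it simply invokes Morel's result that the transfer maps induce morphisms of Rost--Schmid complexes (\cite[\S5]{Mor12}), from which the identity $\partial_v \circ \phi^* = \sum_w \phi_w^* \circ \partial_w$ follows formally, since both sides are just two components of the same differential in the complex associated to the normalization of a curve over $\mathcal{O}_v$. By contrast, you reduce to the monogenic case and then run the classical Bass--Tate/Kato argument on $\AAA^1_{\mathcal{O}_v}$, using the Gauss extension $\tilde v$ and the homotopy exact sequence over $\kappa(v)$ to identify both sides with residues of the specialization $s^{\pi}_{\tilde v}(\gamma)$.

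Interestingly, the paper's footnote explicitly points to your kind of argument as the alternative ``more elementary proof'' (following \cite{Suslin80}). What each approach buys: Morel's route is a one-line citation but imports the full Rost--Schmid machinery (in particular the well-definedness of transfers as complex morphisms, which itself is nontrivial in \cite{Mor12}); your route is more self-contained and stays close to the definition of $\operatorname{Tr}^F_E$ via $-\partial_\infty$, but---as you correctly flag---the real work lies in the twist bookkeeping, i.e.\ checking that the various $\Theta$-isomorphisms attached to the uniformizers of $v$, $\tilde v$, $v_{\bar x}$ and $w$ compose consistently (up to the $e_\epsilon$ from Proposition~\ref{KMWRuleR3a}). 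Your sketch does not carry out this verification, and it is the one place where the Milnor--Witt version genuinely diverges from the Milnor K-theory argument, so if you want a complete elementary proof you should write that step out explicitly rather than defer it to \cite{Fasel13}.
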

\begin{proof}
This follows from a more general result of Morel: transfer maps induce morphims of the corresponding Rost-Schmid complexes \cite[§5]{Mor12}.\footnote{The reader looking for a more elementary proof of the proposition may try to follow the ideas of \cite{Suslin80}.} 
\end{proof}
\section{Milnor-Witt Cycle Premodules} \label{Premodules}


We now define the main object of this paper.
See the subsequent remarks for more details.

\begin{Def} \label{defMWmodules}
A Milnor-Witt cycle premodule $M$ (also written: MW-cycle premodule) is a functor from $\mathfrak{F}_k$ to the category $\Ab$ of abelian groups with the following data \ref{itm:D1},\dots, \ref{itm:D4} and the following rules \ref{itm:R1a},\dots, \ref{itm:R4a}.
\begin{description}
\item[\namedlabel{itm:D1}{D1}] Let $\phi : (E,\mathcal{V}_E)\to (F, \mathcal{V}_F)$ be a morphism in $\mathfrak{F}_k$. The functor $M$ gives a morphism $\phi_*:M(E,\mathcal{V}_E) \to M(F,\mathcal{V}_F)$.
\item [\namedlabel{itm:D2}{D2}] Let $\phi : (E,\mathcal{V}_E)\to (F, \mathcal{V}_F)$ be a morphism in $\mathfrak{F}_k$ where the morphism $E\to F$ is {\em finite}. There is a morphism  $\phi^*:M(F,\Om_{F/k}+\mathcal{V}_F) \to M(E,\Om_{E/k}+\mathcal{V}_E)$.
\item [\namedlabel{itm:D3}{D3}] Let $(E,\mathcal{V}_E)$ and $(E,\mathcal{W}_E)$ be two objects of $\mathfrak{F}_k$. For any element $x$  of $\KMW(E,\mathcal{W}_E)$, there is a morphism 
\begin{center}
$\gamma_x : M(E,\mathcal{V}_E)\to M(E,\mathcal{W}_E+\mathcal{V}_E)$
\end{center}
so that the functor $M(E,-):\mathfrak{V}(E)\to \Ab$ is a left module over the lax monoidal functor $\KMW(E,-):\mathfrak{V}(E)\to \Ab$ (see \cite[Definition 39]{Yetter03}; see also remarks below).

\item [\namedlabel{itm:D4}{D4}] Let $E$ be a field over $k$, let $v$ be a valuation on $E$ and let $\mathcal{V}$ be a virtual projective {$\mathcal{O}_v$-module} of finite type. Denote by $\mathcal{V}_E=\VV \otimes_{\mathcal{O}_v} E$ and $\VV_{\kappa(v)}=\VV \otimes_{\mathcal{O}_v} \kappa(v)$. There is a morphism
\begin{center}
$\partial_v : M(E,\VV_E) \to M(\kappa(v), - \NN_v+\VV_{\kappa(v)}).$
\end{center}
\item [\namedlabel{itm:R1a}{R1a}] Let $\phi$ and $\psi$ be two composable morphisms in $\mathfrak{F}_k$. One has
\begin{center}
 $(\psi\circ \phi)_*=\psi_*\circ \phi_*$.
\end{center}
\item [\namedlabel{itm:R1b}{R1b}]  Let $\phi$ and $\psi$ be two composable finite morphisms in $\mathfrak{F}_k$. One has
\begin{center}
 $(\psi\circ \phi)^*=\phi^*\circ \psi^*$.
\end{center}
\item [\namedlabel{itm:R1c}{R1c}] Consider $\phi:(E,\VV_E)\to (F,\VV_F)$ and $\psi:(E,\VV_E)\to (L,\VV_L)$ with $\phi$ finite and $\psi$ separable. Let $R$ be the ring $F\otimes_E L$. For each $p\in \Spec R$, let $\phi_p:(L,\VV_L)\to (R/p, \VV_{R/p})$ and $\psi_p:(F,\VV_F)\to (R/p, \VV_{R/p})$ be the morphisms induced by $\phi$ and $\psi$. One has
\begin{center}
$\psi_*\circ \phi^*=\displaystyle \sum_{p\in \Spec R} (\phi_p)^*\circ (\psi_p)_*$.
\end{center}

\item [\namedlabel{itm:R2}{R2}] Let $\phi : (E,\VV_E)\to (F,\VV_F)$ be a morphism in $\mathfrak{F}_k$, let $x$ be in $\KMW (E,\mathcal{W}_E)$ and $y$ be in $\KMW (F,\Om_{F/k}+\mathcal{W'}_F)$ where $(E,\mathcal{W}_E)$ and $(F,\mathcal{W}'_F)$ are two objects of $\mathfrak{F}_k$.
\item [\namedlabel{itm:R2a}{R2a}] We have $\phi_* \circ \gamma_x= \gamma_{\phi_*(x)}\circ \phi_*$.
\item [\namedlabel{itm:R2b}{R2b}] Suppose $\phi$ finite. We have $\phi^*\circ \gamma_{\phi_*(x)}=\gamma_x \circ \phi^*$.
\item [\namedlabel{itm:R2c}{R2c}] Suppose $\phi$ finite. We have $\phi^*\circ \gamma_y \circ \phi_*= \gamma_{\phi^*(y)}$.
\item [\namedlabel{itm:R3a}{R3a}] 
  Let $E\to F$ be a field extension and $w$ be a valuation on $F$ which restricts to a non trivial valuation $v$ on $E$ with ramification $e$. Let $\VV$ be a virtual $\mathcal{O}_v$-module so that we have a morphism $\phi:(E, \VV_E)\to (F,\VV_F)$ which induces a morphism
 ${\overline{\phi}:(\kappa(v),-\NN_v+\VV_{\kappa(v)})\to ( \kappa(w),-\mathcal{N}_w+\VV_{\kappa(w)}})$. We have 
 
 \begin{center}
 
$\partial_w \circ \phi_*=\gamma_{e_\epsilon} \circ \overline{\phi}_* \circ \partial_v$. 
 \end{center}
\item [\namedlabel{itm:R3b}{R3b}] Let $E\to F$ be a finite extension of fields over $k$, let $v$ be a valuation on $E$ and let $\VV$ be a $\mathcal{O}_v$-module. For each extension $w$ of $v$, we denote by ${\phi_w: (\kappa(v), \VV_{\kappa(v)}) \to ( \kappa(w), \VV_{\kappa(w)}})$ the morphism induced by ${\phi:(E,\VV_E)\to (F,\VV_F)}$. We have
\begin{center}
$\partial_v \circ \phi^*= \sum_w (\phi_w)^* \circ \partial_w$.
\end{center}
\item [\namedlabel{itm:R3c}{R3c}] Let $\phi : (E,\VV_E)\to (F,\VV_F)$ be a morphism in $\mathfrak{F}_k$ and let $w$ be a valuation on $F$ which restricts to the trivial valuation on $E$. Then 
\begin{center}
$\partial_w \circ \phi_* =0$.
\end{center}
\item [\namedlabel{itm:R3d}{R3d}] Let $\phi$ and $w$ be as in \ref{itm:R3c}, and let $\overline{\phi}:(E,\VV_E)\to (\kappa(w),\VV_{\kappa(w)})$ be the induced morphism. For any uniformizer $\pi$ of $v$, we have
\begin{center}
$\partial_w \circ \gamma_{[-\pi]}\circ \phi_*= \overline{\phi}_*$.
\end{center}

\item [\namedlabel{itm:R3e}{R3e}] Let $E$ be a field over $k$, $v$ be a valuation on $E$ and $u$ be a unit of $v$. Then
\begin{center}
$\partial_v \circ \gamma_{[u]}=\gamma_{\epsilon[\overline{u}]} \circ \partial_v$ and
\\ $\partial_v \circ \gamma_\eta =\gamma_\eta \circ \partial_v$.
\end{center}

 \item [\namedlabel{itm:R4a}{R4a}] Let $(E,\VV_E)\in \mathfrak{F}_k$ and let $\Theta$ be an endomorphism of $(E,\VV_E)$ (that is, an automorphism of $\VV_E$). Denote by $\Delta$ the canonical map\footnote{See \ref{RemarkOnR4a} for more details.} from the group of automorphisms of $\VV_E$ to the group $\kMW(E,0)$. Then
 \begin{center}
 
 $\Theta_*=\gamma_{\Delta(\Theta)}:M(E,\VV_E)\to M(E,\VV_E)$.
 \end{center}
 
\end{description}
\end{Def}

Here is a series of remarks about our definition.
\begin{Par}
One could expand the definition and work with coefficients in an arbitrary abelian category instead of $\mathcal{A}b$.
\end{Par}
\begin{Par}\label{RmkDefMWmodules1}

 The maps $\phi_*$ and $\phi^*$ are respectively called the restriction and corestriction morphisms. We also use the notations $\phi_*= \res_{F/E}$ and $\phi^*=\cores_{F/E}$.
 \par The map $\partial_v$ is called the residue map.

\end{Par}
\begin{Par} \label{RmkDefMWmodules2}

 \ref{itm:D1} and \ref{itm:R1a} are redundant, given the fact that $M$ is a (covariant) functor.

\end{Par}

\begin{Par} \label{RingStructure} Intuitively, the data \ref{itm:D3} state that  we have a left $\bigoplus_{\mathcal{W}_E\in \mathfrak{V}(E)}\KMW(E,\mathcal{W}_E)$-module structure on the abelian group
\begin{center}
$\bigoplus_{\VV_E\in \mathfrak{V}(E)} M(E, \VV_E)$.
\end{center}
This statement is inaccurate for the following reasons. First, one should avoid the use of infinite direct sums (our theory ought to work if we replace $\Ab$ by any abelian category). Moreover, we have implied that $$\bigoplus_{\mathcal{W}_E\in \mathfrak{V}(E)}\KMW(E,\mathcal{W}_E)$$
is a ring which is, rigorously speaking, not true. Among other reasons, the multiplicative structure on this abelian group does not have an identity element because we do not have strict equality between a virtual vector bundle $\mathcal{W}_E$ and $0+\mathcal{W}_E$ (there is only a canonical isomorphism). Instead, we should say that $\KMW(E,-):\mathfrak{V}(E)\to \Ab$ is a lax monoidal functor (see \cite[Definition 5]{Yetter03}) and that $M(E,-):\mathfrak{V}(E)\to \Ab$ is a left module over $\KMW(E,-)$ (see \cite[Definition 39]{Yetter03}). 
\par Finally, we add that the functor $\KMW(E,-)$ is skewed or antisymetric in the sense that for any virtual vector bundles $\VV_E$ and $\mathcal{W}_E$ of rank $m,n$ (respectively), and for any  $x\in \KMW(E,\VV_E)$ and $y\in \KMW(E,\mathcal{W}_E)$, we have $xy=(-1)^{nm}\Theta (yx)$ where $\Theta$ is induced by the canonical switch isomorphism $\mathcal{W}_E+\VV_E\simeq \VV_E+\mathcal{W}_E$. Intuitively, the switch isomorphism $\Theta$ corresponds to multiplication by $\langle -1 \rangle^{nm} = (-\epsilon)^{nm}$ hence we say that the abelian group equipped with the obvious multiplication maps

\begin{center}
$\bigoplus_{\mathcal{W}_E\in \mathfrak{V}(E)}\KMW(E,\mathcal{W}_E)$ 
\end{center}
is $\epsilon$-commutative.
\end{Par}

\begin{Par}
A MW-cycle premodule $M$ can be equipped with a right $\KMW$-module action as follows. For $x\in \KMW(E,\mathcal{W}_E)$, define 
\begin{center}

$\rho_x:M(E,\VV_E)\to M(E,\VV_E+\mathcal{W}_E)$ 
\end{center}
to be the composite $(-1)^{nm}\Theta \circ \gamma_x$ where $\Theta$ is induced by the canonical switch isomorphism $\VV_E+\mathcal{W}_E\simeq \mathcal{W}_E+\VV_E$ and where $m,n$ are the ranks of $\mathcal{W}_E,\VV_E$, respectively.
\end{Par}

\begin{Par}\label{RmkDefMWmodules5}

 With the notations of \ref{itm:D4}, note that we have a short exact sequence of $\kappa(v)$-vector space
\begin{center}
$0\to \NN_v \to \Om_{\mathcal{O}_v/k}\otimes_{\mathcal{O}_v} \kappa(v) \to \Om_{ \kappa(v)/k} \to 0$
\end{center}
and that we have a canonical isomorphism
\begin{center}
$\Om_{\mathcal{O}_v/k}\otimes_{\mathcal{O}_v} \kappa(v) \simeq \Om_{E/k}\otimes_{\mathcal{O}_v} \kappa(v)$
\end{center}
so that the data \ref{itm:D4} is equivalent to having a map
\begin{center}
$\delta_v:M(E,  \Om_{E/k}+\VV_E)\to 
M(\kappa(v),\Om_{ \kappa(v)/k}+\VV_{ \kappa(v)}
)$.
\end{center}
	\end{Par}
	\begin{Par}
	We assume that \ref{itm:R1b} also contains the rule $\phi^*=\Id$ when $\phi$ is the identity map.

	\end{Par} 

\begin{Par}

In \ref{itm:R1c}, the fact that the extension is separable means that there are no multiplicities to consider. This rule is weaker than the corresponding one used by Rost (see \cite[Definition 1.1.(R1c)]{Rost96}) but it is sufficient for our needs. Indeed, we will (in a future paper) prove the following theorem (see also Theorem \ref{BaseChangelci}).
\end{Par}
\begin{Claim}[Strong R1c] \label{StrongR1c} Let $\phi:(E,\VV_E)\to (F,\VV_F)$ and $\psi:(E,\VV_E)\to (L,\VV_L)$ with $\phi$ finite. Let $R$ be the ring $F\otimes_E L$. For each $p\in \Spec R$, let $\phi_p:(L,\VV_L)\to (R/p, \VV_{R/p})$ and ${\psi_p:(F,\VV_F)\to (R/p, \VV_{R/p})}$ be the morphisms induced by $\phi$ and $\psi$. One has
\begin{center}
$\psi_*\circ \phi^*=\displaystyle \sum_{p\in \Spec R} m_p \cdot (\phi_p)^*\circ (\psi_p)_*$
\end{center}

where $m_p$ are some quadratic forms.

\end{Claim}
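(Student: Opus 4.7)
The plan is to reduce the claim to the case of Milnor-Witt K-theory itself, and then propagate the formula to arbitrary MW-cycle premodules via the $\KMW$-module action \ref{itm:D3}. Once the identity
\begin{center}
$\psi_* \circ \phi^* = \displaystyle\sum_{p \in \Spec R} \gamma_{m_p} \circ (\phi_p)^* \circ (\psi_p)_*$
\end{center}
is established for $M = \KMW$, the analogous formula for a general $M$ follows by composing with the module action and using \ref{itm:R2a}, \ref{itm:R2b} and \ref{itm:R2c} to slide the multiplication by $m_p$ past restrictions and corestrictions. In particular, the quadratic forms $m_p$ produced at the level of $\KMW$ serve as the correct multiplicities for every MW-cycle premodule simultaneously.

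To establish the formula for $\KMW$, I would first factor $\psi:E\to L$ as $E\to E_{\mathrm{sep}}\to L$ where $E_{\mathrm{sep}}/E$ is separable and $L/E_{\mathrm{sep}}$ is purely inseparable. Proposition \ref{KMWR1c} (the separable base change that is already known) handles the first extension with trivial multiplicities $m_q = \langle 1 \rangle$, reducing everything to the case where $\psi$ itself is purely inseparable. In this remaining case $\Spec(F\otimes_E L)$ has a single point $p$, the Artinian local ring $R_p$ has residue field $\kappa(p) = R/p$, and one expects $m_p \in \operatorname{GW}(\kappa(p)) \subset \kMW_0(\kappa(p))$ to be the class of a trace form built from $R_p$ (this is the quadratic refinement of Rost's classical multiplicity, which is the integer $\operatorname{length}(R_p)$). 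The form itself can be constructed by choosing a presentation of $R_p$ as a relative complete intersection over $L$ and taking the associated trace pairing; equivalently, it arises naturally from the base change formula for Gysin morphisms along lci maps (Theorem \ref{BaseChangelci}), whose proof does not rely on Strong R1c.

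The formula in the purely inseparable case should then be verified by the explicit construction of transfer maps recalled in \ref{kMWTransfert}: realize both sides as residues at infinity on a suitable rational function field, and compute the finite residues using \ref{itm:R3b} together with the homotopy exact sequence of Theorem \ref{KMWhmtpyInv}. The main obstacle is definitional rather than computational: the canonical twist $\Om_{L/E}$ that governs transfers in the separable framework degenerates here, so one must track all virtual bundle twists through a chosen presentation of $R_p$ and then prove that the resulting $m_p$ is independent of the presentation. This independence, together with the compatibility of the two reduction steps above, is the technical heart of the argument and is presumably why a complete treatment is deferred to a subsequent paper.
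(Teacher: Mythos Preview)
The paper does not prove this statement: it is labelled a \emph{Claim}, and the text explicitly says the proof ``is postponed to another paper'' and ``we will (in a future paper) prove the following theorem''. So there is no proof in the paper to compare your sketch against; I can only assess the sketch on its own terms and against the hints the paper gives.

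Your opening reduction --- prove the identity for $M=\KMW$ and then ``propagate'' it to an arbitrary MW-cycle premodule via \ref{itm:R2a}--\ref{itm:R2c} --- does not work. The projection formulas let you slide an element of $\KMW$ past a given $\phi_*$ or $\phi^*$, but they say nothing about the relation between the structural maps $\psi_*$ and $\phi^*$ of $M$ themselves. Concretely: knowing that $\psi_*\phi^*(x)=\sum_p m_p\,(\phi_p)^*(\psi_p)_*(x)$ for every $x\in\KMW(F,\cdot)$ gives no handle on $\psi_*\phi^*(\rho)$ for $\rho\in M(F,\cdot)$, because $\rho$ is not of the form $x\cdot(\text{something})$ with $x$ running over a generating set of $\KMW$. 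The corestriction $\phi^*$ for $M$ is an independent datum (\ref{itm:D2}), not determined by the $\KMW$-action, so an identity in $\KMW$ cannot be transported to $M$ by module-action tricks alone.

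The paper's own hint points in a different direction: it refers to Theorem~\ref{BaseChangelci}, and in the outline promises to ``define general flat pullbacks and study the associated multiplicities'' in later work. That suggests the intended argument is geometric and uniform in $M$: build the missing pullbacks out of the five basic maps (deformation to the normal cone, Gysin morphisms), for which all compatibilities are already proved for arbitrary $M$ in Sections~\ref{FiveBasic}--\ref{Compatibilities} and~\ref{GysinMorphisms}, and then read off the quadratic multiplicities $m_p$ from that construction. You do mention Theorem~\ref{BaseChangelci}, but only to guess what $m_p$ should be, not as the engine of the proof. Your separable/purely-inseparable factorisation and the expectation that $m_p$ is a trace-type form in $\operatorname{GW}(\kappa(p))$ are reasonable heuristics, but they would have to be run through the cycle-complex machinery for general $M$, not only for $\KMW$.
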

The multiplicities $m_p$ remain to be defined. One expected property is that the rank of $m_p$ is the usual multiplicity as defined in \cite[§1]{Rost96} (that is, the length of the localized ring $R_{(p)}$).

\begin{Par}
 In the rules \ref{itm:R2a}, \ref{itm:R2b} and \ref{itm:R2c}, $\phi_*(x)$ and $\phi^*(y)$ are defined as in \ref{kMWTransfert}. Moreover, these three rules are sometimes called {\em projection formulae} and could be illustrated with the following identities:
\begin{center}
$\phi_*(x \cdot \rho)=\phi_*(x)\cdot \phi_*(\rho)$,
\\ $\phi^*(\phi_*(x)\cdot \mu)=x\cdot \phi^*(\mu)$,
\\ $\phi^*(y\cdot \phi_*(\rho))=\phi^*(y)\cdot \rho$.
\end{center}
Rigorously, we should write \ref{itm:R2b} as $\Theta\circ \phi^*\circ \Theta' \circ \gamma_{\phi_*(x)}=\gamma_x\circ \phi^*$ where $\Theta,\Theta'$ are induced by the canonical isomorphisms $\mathcal{W}_F+\Om_{F/k}\simeq \Om_{F/k}+\mathcal{W}_F$ and $\Om_{E/k}+\mathcal{W}_E \simeq \mathcal{W}_E+\Om_{E/k}$, respectively.

\end{Par}

\begin{Par}
With the notation of \ref{itm:R3a}, notice that we do have an isomorphism $\NN_v \otimes_{\kappa(v)} \kappa(w) \simeq \NN_w$ of virtual vector spaces defined as in \ref{KMWRuleR3a}. Moreover, we consider $e_\epsilon$ to be an element of $\KMW(\kappa(w),0)$. We will show in a future paper that we could weaken this rule by considering only unramified extensions. 

\end{Par} 

%

\begin{Par}
The rule \ref{itm:R3b} could be understood as
\begin{center}
$\delta_v \circ \phi^{*} = \sum_w (\phi_w)^{*} \circ  \delta_w $.
\end{center}

%
%

\end{Par}
\begin{Par}
In the rule \ref{itm:R3d}, the element $[-\pi]$ is considered to be in $\KMW(F, \AAA_F^1)$ so that the map $\partial_w \circ \gamma_{[-\pi]} \circ \phi_*$ takes value in $M(\kappa(w), -\NN_w+\AAA_{\kappa(w)}^1+ \VV_{\kappa(w)})$ (not in $M(\kappa(w), \VV_{\kappa(w)})$, as needed). In order to fix the issue, we make (as in the previous Section) a choice of isomorphism between $\AAA^1_{\kappa(w)}$ and $\NN_w$ using the uniformizer $\pi$ so that we can consider the composite of the following two\footnote{Beware of the switch isomorphism $-\NN_w+\AAA^1_{\kappa(w)}\simeq \AAA^1_{\kappa(w)}+(-\NN_w)$ which corresponds (in some sense) to multiplication by $\langle -1 \rangle$.}  isomorphisms $-\NN_w+\AAA^1_{\kappa(w)}\simeq \AAA^1_{\kappa(w)}+(-\NN_w)\simeq 0$. Thanks to the data \ref{itm:D1}, this induces an isomorphism:
\begin{center}
$\Theta_\pi:M(\kappa(w), -\NN_w+\AAA^1_{\kappa(w)}+\VV_{\kappa(w)}) \to M(\kappa(w), \VV_{\kappa(w)})$. 

\end{center}
The attentive reader might be pleased to see the rule \ref{itm:R3d} stated as:
\begin{center}
$\Theta_\pi \circ \partial_w \circ \gamma_{[-\pi]} \circ \phi_*= \overline{\phi}_*$.
\end{center}
Actually, we want more flexibility regarding the grading so that we may worry less about hidden isomorphisms. Indeed, keeping the previous notations, consider an isomorphism $\mathcal{W}\simeq \AAA^1_{\mathcal{O}_w}+\VV$ of virtual $\mathcal{O}_w$-modules. It induces (via \ref{itm:D1}) two group isomorphisms:
\begin{center}

$\Theta_{\kappa(w)}:M(\kappa(w),-\NN_v+\mathcal{W}_{\kappa(w)})\to M(\kappa(w), -\NN_v+\AAA^1_{\kappa(w)}+\VV_{\kappa(w)})$ and 
\\ 
$\Theta_{F}:M(F,\mathcal{W}_{F})\to M(F,\AAA^1_{F}+\VV_{F})$.

\end{center}
The rule \ref{itm:R3d} should state that we have the identity:
\begin{center}
$\Theta_\pi \circ \Theta_{\kappa(w)}^{-1} \circ \partial_w \circ \Theta_F \circ \gamma_{[-\pi]} \circ \phi_*= \overline{\phi}_*$.
\end{center}
 \par In this paper, we will encounter a number of similar cases where equality holds only up to a canonical isomorphism. It will always be an isomorphism $\Theta$ coming from the data \ref{itm:D1} and an isomorphism between virtual spaces.
\end{Par} 
\begin{Def}
With the previous notations, we denote by $s^\pi_w$ the morphism 
\begin{center}
$ \Theta_\pi \circ \partial_w \circ \gamma_{[-\pi]}:M(F,\VV_F)\to M(\kappa(w),\VV_{\kappa(w)})$

\end{center}
and we call it the {\em specialization map}.
\end{Def}

\begin{Par} A more rigorous way to state $\ref{itm:R3e}$ would be to write:
\begin{center}
$\partial_v \circ \gamma_{[u]\otimes 1}=-\Theta \circ  \gamma_{[\overline{u}]\otimes 1} \circ \partial_v$ and
\\ $\partial_v \circ \gamma_{\eta\otimes 1} =-\Theta' \circ \gamma_{\eta\otimes 1} \circ \partial_v$,
\end{center}
where $\Theta$ (resp. $\Theta'$) is induced by the isomorphism $\AAA^1_F+(-\NN_v)\simeq -\NN_v+\AAA^1_F$ (resp. $-\AAA^1_F+(-\NN_v)\simeq -\NN_v + (-\AAA^1_F)$) of virtual vector bundles. As in the previous remark concerning \ref{itm:R3d}, we actually want to be flexible regarding the grading and allow isomorphisms of virtual vector bundles in the axioms.

\end{Par}
\begin{Par} \label{RemarkOnR4a}
The rule \ref{itm:R4a} does not appear in the classical theory of \cite{Rost96}, obviously. We mainly need this rule to prove the adjunction theorem (see \ref{AdjunctionTheorem}). Keeping the notations of this rule, we note that the automorphism $\Theta:\VV_E \to \VV_E$ of virtual vector spaces induces an automorphism ${\det(\Theta):\det(\VV_E)\to \det(\VV_E)}$ (of dimension $1$ vector spaces) which corresponds to a unit $\delta_{\Theta}\in F^{\times}$; by definition $\Delta(\Theta)=\langle \delta_{\Theta} \rangle$ (recall the notation defined in \ref{NotationsKMW}).

\end{Par} 
 
\begin{Par}
As in \cite[§1.1]{Rost96}, the rule \ref{itm:R3e} implies:
\begin{description}
\item [\namedlabel{itm:R3f}{R3f}] Let $(E,\VV_E)$ and $(E,\mathcal{W}_E)$ be two objects in $\mathfrak{F}_k$, let $v$ be a valuation on $E$ and $\pi$ be a uniformizer of $v$. For
 any $x \in \KMW (E, \mathcal{W}_E)$ and
  $\rho \in M(E,\VV_E)$, we have
\begin{center}
$\partial_v(x\cdot \rho)=\partial_v(x)\cdot s^{\pi}_v(\rho)+(-1)^n\Theta_s(s^\pi_v(x)\cdot \partial_v(\rho))+(-1)^n\Theta_{\pi}([-1]\cdot \partial_v(x) \cdot \partial_v(\rho))$,
\\ $s^\pi_v(x\cdot \rho)=s^\pi_v(x)\cdot s^\pi_v(\rho)$,
\end{center}
where $n=\rk(\mathcal{W}_E)$, $\Theta_s$ is induced by the switch isomorphism ${\mathcal{W}_E+(-\NN_v)\simeq -\NN_v+\mathcal{W}_E}$ and $\Theta_\pi$ is induced by the composite isomorphism

\begin{center}
 $(\AAA^1_{\kappa(v)}+(-\NN_v))+\mathcal{W}_{\kappa(v)}+(-\NN_v)+\VV_{\kappa(v)} \simeq 0+\mathcal{W}_{\kappa(v)}+(-\NN_v)+\VV_{\kappa(v)}\simeq -\NN_v+\mathcal{W}_{\kappa(v)}+\VV_{\kappa(v)}$.

\end{center}
\end{description} 
\end{Par}
\begin{Par}

With the previous notations, if $\pi'$ is another uniformizer of $v$, put $\pi'=u\pi$. Then (by \ref{itm:R3e} and \ref{itm:R4a})
\begin{center}
$s^{\pi'}_v(x)=s^\pi_v(x) - \Theta_{\pi'} ([\overline{u}] \cdot \partial_v(x))$,
\end{center} 
where $\Theta_{\pi'}$ is induced by the isomorphism $\AAA^1_{\kappa(v)}+(-\NN_v)\simeq 0$ defined by $\pi'$.

\end{Par}
The following theorem follows from  the results of Section \ref{MainExemple}.
\begin{The}
The functor $\KMW$ is a MW-cycle premodule.
\end{The}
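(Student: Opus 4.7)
The strategy is to match each piece of data D1--D4 with a structure from Section \ref{MainExemple}, and then verify each rule by invoking the corresponding result proved there or by a short computation using the presentation of $\kMW_*(E)$ from Definition \ref{kMWdefinition}.

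\textit{The data.} For D1, the restriction $\phi_*$ is the evident map $\KMW(E,\VV_E)\to \KMW(F,\VV_F)$ induced by the field extension together with the isomorphism $\VV_E\otimes_E F\simeq\VV_F$. For D2, $\phi^*$ is the transfer map $\operatorname{Tr}^F_E$ constructed in \ref{kMWTransfert} from the homotopy exact sequence (Theorem \ref{KMWhmtpyInv}). For D3, the multiplication $\gamma_x$ is the lax monoidal product of \ref{TwistedMWKTheory}. For D4, $\partial_v$ is the twisted residue morphism of \ref{KMWresidue}.

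\textit{Rules R1 and R2.} Rule R1a is tautological once D1 is identified as a restriction. Rule R1b is the functoriality of transfers built into \ref{kMWTransfert} via Morel's independence-of-factorization theorem \cite[Theorem 4.27]{Mor12}. Rule R1c is exactly Proposition \ref{KMWR1c}. The projection formulae R2a, R2b, R2c are classical: R2a follows from the fact that the product in \ref{TwistedMWKTheory} is natural in the field, and R2b, R2c follow from the defining characterization of $\operatorname{Tr}^F_E$ via the total residue map in the homotopy exact sequence, as established in \cite[§4.2]{Mor12}.

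\textit{Rules R3.} Rule R3a is Proposition \ref{KMWRuleR3a} and R3b is Proposition \ref{KMWRuleR3b}. For R3c, restriction sends units to units, so $\phi_*(\alpha)$ is a sum of products of symbols $[a]$ with $a\in\mathcal O_w^\times$ and of $\eeta$; the two defining relations $\partial^\pi_w([a_1,\ldots,a_n])=0$ on units and $\partial_w\circ\eeta=\eeta\circ\partial_w$ in \ref{KMWresidue} give $\partial_w\circ\phi_*=0$. Rule R3d is a reformulation of the specialization map of \ref{kMWTransfert}: writing $s^\pi_w=\Theta_\pi\circ\partial_w\circ\gamma_{[-\pi]}$, and using that $s^\pi_w([a])=[\bar a]$ for every $a\in E^\times$ (which is a $w$-unit since $w$ is trivial on $E$), one gets $s^\pi_w\circ\phi_*=\bar\phi_*$, which is the identity claimed in R3d once the canonical isomorphism $\Theta_\pi$ is allowed in the grading as explained after Definition R3d. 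Rule R3e reduces to a direct Leibniz computation: for $u$ a unit, $\partial_v([u]\cdot x)=\gamma_{\epsilon[\bar u]}\circ\partial_v(x)$ because $\partial_v([u])=0$ and $s^\pi_v([u])=[\bar u]$, and the commutation with $\eeta$ is built into \ref{KMWresidue}.

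\textit{Rule R4a.} This is the axiom absent from Rost's theory and will be the main delicate point. Given an automorphism $\Theta:\VV_E\to\VV_E$ of virtual vector spaces, its determinant is multiplication on the line $\det(\VV_E)$ by a unit $\delta_\Theta\in E^\times$, and by definition $\Delta(\Theta)=\langle\delta_\Theta\rangle\in\kMW_0(E)$. Using the presentation $\KMW(E,\VV_E)=\kMW_n(E)\otimes_{\ZZ[E^\times]}\ZZ[\mathcal L_E^\times]$ of \ref{TwistedMWKTheory}, the induced map $\Theta_*$ acts on the second tensor factor by multiplication by $\delta_\Theta$; transferring this scalar across the tensor product over $\ZZ[E^\times]$ (on which $E^\times$ acts by $u\mapsto\langle u\rangle$) rewrites it as multiplication by $\langle\delta_\Theta\rangle$ on the first factor, which is precisely $\gamma_{\Delta(\Theta)}$.

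\textit{Main obstacle.} No single step is hard once the preparatory material in Section \ref{MainExemple} is invoked; the genuine work lies in keeping track of the canonical isomorphisms of virtual vector spaces (in particular in R3a, R3d and R3e) and in verifying R4a, which cements the compatibility between the tensor-product description of twisted Milnor-Witt K-theory and the determinantal action of automorphisms.
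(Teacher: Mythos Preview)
Your proposal is correct and follows essentially the same approach as the paper: you identify D1--D4 with the structures of Section \ref{MainExemple}, invoke Propositions \ref{KMWRuleR3a}, \ref{KMWR1c}, \ref{KMWRuleR3b} for R3a, R1c, R3b, treat R2 and the remaining R3 rules as direct computations, and verify R4a by moving the unit $\delta_\Theta$ across the tensor product $\kMW_n(E)\otimes_{\ZZ[E^\times]}\ZZ[\mathcal L_E^\times]$ exactly as the paper does. If anything, you supply more detail on R3c--R3e than the paper, which dispatches them in one line with a reference to \cite[Theorem 3.15 and Proposition 3.17]{Mor12}; one small slip is that the specialization map you cite in R3d lives in the paragraph before \ref{kMWTransfert}, not in \ref{kMWTransfert} itself.
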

\begin{proof}
The Milnor-Witt K-theory is indeed equipped with the data \ref{itm:D1} (see \ref{TwistedMWKTheory}), \ref{itm:D2} (see \ref{kMWTransfert}), \ref{itm:D3} (see \ref{TwistedMWKTheory}) and \ref{itm:D4} (see \ref{KMWresidue}). Rule \ref{itm:R1a} and rule \ref{itm:R1b} are obvious from the definitions. Rule \ref{itm:R1c} is Theorem \ref{KMWR1c}. The projection formulae \ref{itm:R2a}, \ref{itm:R2b} and \ref{itm:R2c} are straightforward computations. Rule \ref{itm:R3a} is Theorem \ref{KMWRuleR3a}. Rule \ref{itm:R3b} is Theorem \ref{KMWRuleR3b}. Rule \ref{itm:R3c}, rule \ref{itm:R3d} and rule \ref{itm:R3e} are straightforward computations (see \cite[Theorem 3.15 and Proposition 3.17]{Mor12}). 
\par We prove Rule \ref{itm:R4a}. Let $(E,\VV_E)\in \mathfrak{F}_k$ and let $\Theta$ be an endomorphism of $(E,\VV_E)$. The map $\Theta$ induces a morphism of dimension $1$ vector spaces
\begin{center}
$\det(\VV_E)\to \det(\VV_E)$\\
$l\mapsto \delta_{\Theta}\cdot l$
\end{center} 
where $\delta_{\Theta}\in E^{\times}$. We need to prove that
 \begin{center}
 
 $\Theta_*=\gamma_{\Delta(\Theta)}:\kMW_{n}(E)\otimes_{\ZZ[E^{\times}]} \det(\VV_E)\to \kMW_{n}(E)\otimes_{\ZZ[E^{\times}]} \det(\VV_E)$
 \end{center}
where $n$ is the rank of $\VV_E$ and $\Delta(\Theta)$ is the quadratic form $\langle {\delta_{\Theta}} \rangle $. Let $x\in \kMW_{n}(E)$ and $l\in \det(\VV_E)$, we have $\Theta_*(x\otimes l)=x\otimes (\delta_{\Theta}\cdot l)=(x\cdot \langle {\delta_{\Theta}} \rangle) \otimes l=\gamma_{\Delta(\Theta)}(x\otimes l)$, hence the result.
 
\end{proof}

%

\begin{Def}

A pairing $M\times M'\to M''$ of MW-cycle premodules over $k$ is given by bilinear maps for each $(E,\VV_E),(E,\mathcal{W}_E)$ in $\mathfrak{F}_k$
\begin{center}

$M(E,\VV_E)\times M'(E,\mathcal{W}_E)\to M''(E,\VV_E+\mathcal{W}_E)$
\\ $(\rho, \mu)\to \rho \cdot \mu$
\end{center}
which respect the $\KMW$-module structure and which have the properties \ref{itm:P1}, \ref{itm:P2}, \ref{itm:P3} stated below.
\begin{description}
\item [\namedlabel{itm:P1}{P1}] For $x\in \KMW(E,\mathcal{W}_E), \rho \in M(E,\VV_E), \mu \in M'(E,\VV_E')$,
\item [\namedlabel{itm:P1a}{P1a}] $(x\cdot \rho) \cdot \mu= x\cdot (\rho \cdot \mu),$
\item [\namedlabel{itm:P1b}{P1b}] $ (\rho \cdot x)\cdot \mu= \rho \cdot (x \cdot \mu)$.
\end{description}
\begin{description}
\item [\namedlabel{itm:P2}{P2}] Let $\phi :E\to F$ (finite in P2b, P2c), $\lambda \in M(E,\VV_E)$, $\nu\in M(F,\Omega_{F/k}+\mathcal{W}_F)$, ${\rho \in M'(E,\VV_E')}$, $\mu\in M'(F,\Omega_{F/k}+\mathcal{W}_E')$,
\item [\namedlabel{itm:P2a}{P2a}] $\phi_*(\lambda\cdot \rho)=\phi_*(\lambda)\cdot \phi_*(\rho)$ ,
\item [\namedlabel{itm:P2b}{P2b}] $\Theta(\phi^*(\Theta'(\phi_*(\lambda)\cdot \mu)))=\lambda\cdot \phi^*(\mu)$ where $\Theta,\Theta'$ are induced by the canonical isomorphisms $\mathcal{W}_F+\Om_{F/k}\simeq \Om_{F/k}+\mathcal{W}_F$ and $\Om_{E/k}+\mathcal{W}_E \simeq \mathcal{W}_E+\Om_{E/k}$, respectively,
\item [\namedlabel{itm:P2c}{P2c}] $\phi^*(\nu\cdot \phi_*(\rho))=\phi^*(\nu)\cdot \rho$.
\end{description}
\begin{description}
\item [\namedlabel{itm:P3}{P3}] For a valuation $v$ on $E$, $x\in M(E,\VV_E)$, $\rho \in M'(E,\mathcal{W}_E)$ and a uniformizer $\pi$ of $v$, one has

\begin{center}
$\partial_v(x\cdot \rho)=\partial_v(x)\cdot s^{\pi}_v(\rho)+(-1)^n\Theta_s(s^\pi_v(x)\cdot \partial_v(\rho))+(-1)^n\Theta_\pi([-1]\cdot \partial_v(x) \cdot \partial_v(\rho))$

\end{center}
where $n$ is the rank of $\VV_E$, where $\Theta_s$ is induced by the switch isomorphism ${\mathcal{W}_E+(-\NN_v)\simeq -\NN_v+\mathcal{W}_E}$ and where $\Theta_\pi$ is induced by the composite isomorphism

\begin{center}
 $(\AAA^1_{\kappa(v)}+(-\NN_v))+\mathcal{W}_{\kappa(v)}+(-\NN_v)+\VV_{\kappa(v)} \simeq 0+\mathcal{W}_{\kappa(v)}+(-\NN_v)+\VV_{\kappa(v)}\simeq -\NN_v+\mathcal{W}_{\kappa(v)}+\VV_{\kappa(v)}$.

\end{center}
\end{description}
\par A ring structure on a MW-cycle premodule $M$ is a pairing $M\times M\to M$ which induces on $\bigoplus_{\VV_E\in \mathfrak{V}(E)}M(E,\VV_E)$ an associative and $\epsilon$-commutative ring structure (see also \ref{RingStructure}).
\end{Def}
\begin{Exe}
By definition, a Milnor-Witt cycle premodule $M$ comes equipped with a pairing $\KMW \times M \to M$. When $M=\KMW$, this defines a ring structure on $M$.

\end{Exe}

\section{Milnor-Witt Cycle Modules} \label{Modules}

\begin{Par}\label{2.0.1}

Throughout this section, $M$ denotes a Milnor-Witt cycle premodule over $k$.
\par Let $X$ be a scheme over $k$ and $\VV_X$ be a virtual bundle over $X$.
\par For $x$ in $X$ the virtual bundle $\VV_X \times_X \Spec \kappa(x)$ over $\Spec \kappa(x)$ corresponds to a virtual vector space $\VV_x$ over $\kappa(x)$ via the equivalence between vector bundles over $\Spec \kappa(x)$ and $\kappa(x)$-vector spaces.
\par Throughout this paper, we write 
\begin{center}
$M(x, \VV_X)=M(\kappa(x), \Om_{\kappa(x)/k}+\VV_x)$.
\end{center}
\par  If $X$ is irreducible, we write $\xi_X$ or $\xi$ for its generic point. 
\par If $X$ is normal, then for any $x\in X^{(1)}$ the local ring of $X$ at $x$ is a valuation ring so that \ref{itm:D4} gives us a map $\partial_x: M(\xi, \VV_X) \to M(x, \VV_X)$.

Now suppose $X$ is an arbitrary scheme over $k$ and let $x,y$ be two points in $X$. We define a map
\begin{center}
$\partial^x_y:M(x,\VV_X) \to M(y,\VV_X)$
\end{center}
as follows. Let $Z=\overline{ \{x\}}$. If $y\not \in Z$, then put $\partial^x_y=0$. If $y\in Z$, let $\tilde{Z}\to Z$ be the normalization and put
\begin{center}
$\partial^x_y=\displaystyle \sum_{z|y} \cores_{\kappa(z)/\kappa(y)}\circ \, \partial_z$
\end{center}
with $z$ running through the finitely many points of $\tilde{Z}$ lying over $y$.

\end{Par}

\begin{Def}

A Milnor-Witt cycle module $M$ over $k$ is a Milnor-Witt cycle premodule $M$ which statisfies the following conditions \ref{itm:FD} and \ref{itm:C}.
\begin{description}
\item [\namedlabel{itm:FD}{(FD)}] {\sc Finite support of divisors.} Let $X$ be a normal scheme, $\VV_X$ be a virtual vector bundle over $X$ and $\rho$ be an element of $M(\xi_X,\VV_X)$. Then $\partial_x(\rho)=0$ for all but finitely many $x\in X^{(1)}$.
\item [\namedlabel{itm:C}{(C)}] {\sc Closedness.} Let $X$ be integral and local of dimension 2 and $\VV_X$ be a virtual bundle over $X$. Then
\begin{center}
$0=\displaystyle \sum_{x\in X^{(1)}} \partial^x_{x_0} \circ \partial^{\xi}_x: M(\xi_X,\VV_X)\to M(x_0, \VV_X)$
\end{center}
where $\xi$ is the generic point and $x_0$ the closed point of $X$.
\end{description}
\end{Def}
\begin{Par}

Of course \ref{itm:C} makes sense only under presence of \ref{itm:FD} which guarantees finiteness in the sum. More generally, note that if \ref{itm:FD} holds, then for any scheme $X$, any virtual bunde $\VV_X$ over $X$, any $x\in X$ and any $\rho\in M(x, \VV_X)$ one has $\partial^x_y(\rho)=0$ for all but finitely many $y\in X$.
\end{Par}

\begin{Par} \label{DefDifferential}
If $X$ is integral and \ref{itm:FD} holds for $X$, we put
\begin{center}
$d=(\partial^\xi_x)_{x\in X^{(1)}}:M(\xi, \VV_X) \to \displaystyle \bigoplus_{x\in X^{(1)}} M(x, \VV_X)$.
\end{center}

\end{Par}

\begin{Def} \label{DefMWmorphisms}
\begin{description}
\item 
A morphism $\omega:M\to M'$ of Milnor-Witt cycle modules over $k$ is a natural 
transformation which commutes\footnote{In particular, it commutes with multiplication by $\eeta$ (up to canonical isomorphisms). This fact will be important for Theorem \ref{AdjunctionTheorem}.} with the data \ref{itm:D1},\dots, \ref{itm:D4} (in other words, it is a left map in the sense of \cite[Definition 40]{Yetter03}).

\end{description}

\end{Def}
\begin{Rem}\label{MWCyleAbelian}

We denote by $\CatMW_k$ the category of Milnor-Witt cycle modules (where arrows are given by morphisms of MW-cycle modules). This is an abelian category.
\end{Rem}
\begin{Exe}\label{ExampleColimit}

Let $M$ be a Milnor-Witt cycle module. For any integer $n$, we define a Milnor-Witt cycle module $M\{n\}$ by 
\begin{center}

$M\{n\}(E,\VV_E)=M(E,n\cdot \AAA^1_E+\VV_E)$
\end{center}
for any (finitely generated) field $E/k$ and any virtual bundle $\VV_E$ over $E$.
\par Now, we define a Milnor-Witt cycle module $M[\eeta^{-1}]$ as follows. Let $I$ be the category of finite ordinal numbers (objects are natural numbers $n$ and arrows are given by the relation $n\leq m$). We consider the functor $F_M:I\to \CatMW_k$ that takes a natural number $n$ to $M\{-n\}$ and an arrow $n\leq m$ to the multiplication by $\eeta^{m-n}$. By definition, the Milnor-Witt cycle module $M[\eeta^{-1}]$ is the colimit of the diagram $F_M$. In particular, for $M=\KMW$ and for any field $E$, we obtain the following group isomorphism which respects the multiplication in some obvious sense:
\begin{center}

$\bigoplus_{n\in \ZZ} \KMW [\eeta^{-1}](E,n\cdot \AAA^1_E)\simeq \kMW_*(E)[\eeta^{-1}]$.
\end{center}  
The ring $\kMW_*(E)[\eeta^{-1}]$ is known to be isomorphic to $ W(E)[\eeta^{\pm 1}]$.
\end{Exe}

\begin{Par}

In the following, let $F$ be a field over $k$, $\VV_F$ be a virtual bundle over $\Spec F$ and ${\AAA_F^1=\Spec F[t]}$ be the affine line over $\Spec F$ with function field $F(t)$. 
\end{Par}

\begin{Pro} \label{Prop2.2}

Let $M$ be a Milnor-Witt cycle module over $k$. With the previous notations, the following properties hold.
\begin{description}
\item [\namedlabel{itm:(H)}{(H)}]  {\sc  Homotopy
 property for $\AAA^1$}. We have a short exact sequence

\begin{center}
$\xymatrix@C=10pt@R=20pt{
0 \ar[r] &  M(F,\AAA^1_F+\Om_{F/k}+\VV_F) \ar[r]^-{\res}   & M(F(t),\Om_{F(t)/k}+ \VV_{F(t)}) \ar[r]^-d  &  \bigoplus_{x\in {(\AAA_F^1)}^{(1)}} M(\kappa(x), \Om_{\kappa(x)/k}+\VV_{\kappa(x)}) \ar[r] & 0
}
$
\end{center}
where the map $d$ is defined in \ref{DefDifferential}.

%
\item [\namedlabel{itm:(RC)}{(RC)}] {\sc Reciprocity for curves}. Let $X$ be a proper curve over $F$ and $\VV_F$ a virtual bundle\footnote{We also write $\VV_F$ for the corresponding virtual $F$-vector space.} over $\Spec F$. Then
\begin{center}

$\xymatrix{
M(\xi_X, \VV_X) \ar[r]^-d & \displaystyle \bigoplus_{x\in X^{(1)}} M(x, \VV_X) \ar[r]^c & M(F,  \Om_{F/k}+\VV_F)
}$

\end{center}
is a complex, that is $c\circ d=0$ (where $c={\sum_x \cores_{\kappa(x)/F}}$).
\end{description}
\end{Pro}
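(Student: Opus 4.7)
The proof will follow Rost's strategy \cite[§2]{Rost96}, carefully adapted to the Milnor-Witt setting. That the sequence is a complex is immediate: for any closed point $x \in (\AAA^1_F)^{(1)}$, the valuation $v_x$ restricts trivially to $F$, so rule \ref{itm:R3c} forces $\partial_{v_x} \circ \res_{F(t)/F} = 0$. For the injectivity of $\res$, I construct a retraction via the specialization map $s^t_{v_0}$ at the $F$-rational point $t = 0$. Using rule \ref{itm:R3d} for the inclusion $\phi\colon F \hookrightarrow F(t)$ with uniformizer $t$ (so that the induced map $\overline{\phi}\colon F \to \kappa(v_0) = F$ is the identity), one obtains
\[ s^t_{v_0} \circ \res = \Theta_t \circ \partial_{v_0} \circ \gamma_{[-t]} \circ \res = \Theta_t \circ \overline{\phi}_* = \Theta_t, \]
which is the canonical identification of twists.

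\textbf{Plan for exactness and surjectivity.} For the remaining content of (H), I will argue by induction on the maximal degree of irreducible polynomials in the support of $d(m)$, using Theorem \ref{KMWhmtpyInv} (the homotopy sequence for $\KMW$ itself) and the $\KMW$-module structure of $M$ as input. Given a class in $\bigoplus_x M(\kappa(x), \ldots)$ supported at a single point $x$, one exhibits an explicit preimage in $M(F(t), \ldots)$ by combining the $\KMW$-action of a symbol supplied by Theorem \ref{KMWhmtpyInv} (a class in $\KMW(F(t), \ldots)$ whose $v_x$-residue is $1 \in \kMW_0(\kappa(x))$ and whose other residues vanish) with the $\KMW$-residue formalism encoded in \ref{itm:R3e} and \ref{itm:R3f}. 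Subtracting such a lift from any prospective element of $\ker d$ strictly reduces the support, and iteration terminates thanks to the finite support axiom \ref{itm:FD}.

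\textbf{Plan for (RC).} I first reduce to the case $X = \PP^1_F$ by choosing a finite projection $\pi\colon X \to \PP^1_F$. Rule \ref{itm:R3b}, applied to the finite extension $F(t) \hookrightarrow \kappa(X)$ at each valuation $v_y$ of $F(t)$, yields
\[ \partial_y\bigl(\cores_{\kappa(X)/F(t)}(m)\bigr) = \sum_{\pi(x)=y} \cores_{\kappa(x)/\kappa(y)}\bigl(\partial^{\xi_X}_x(m)\bigr), \]
and functoriality of corestrictions (\ref{itm:R1b}) converts the double sum on $\PP^1_F$ into the single sum over $X^{(1)}$ appearing in (RC); so it suffices to treat $X = \PP^1_F$. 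By part (H), every $m \in M(F(t), \ldots)$ decomposes modulo $\res(M(F, \ldots))$: the restricted component contributes zero to all residues (including the one at $\infty$) by \ref{itm:R3c}, and the complementary contribution is controlled by transferring the $\KMW$-level reciprocity identity of the lemma following \ref{kMWTransfert}, namely $\sum_x \Tr^{\kappa(x)}_F \circ \partial_{v_x} + \partial^{-1/t}_{v_\infty} = 0$ on $\KMW(F(t), \ldots)$, to $M$ through the projection formula \ref{itm:R2c} applied to the explicit lifts constructed in part (H).

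\textbf{Main obstacle.} The principal difficulty throughout is the systematic bookkeeping of virtual bundle twists. Each residue, specialization, and corestriction map is accompanied by a canonical isomorphism of virtual bundles involving the normal bundles $\NN_v$, cotangent sheaves $\Om$, switch isomorphisms, and uniformizer-dependent splittings; verifying that these isomorphisms compose coherently requires systematic appeal to rule \ref{itm:R4a}. The conceptual thrust is a direct adaptation of Rost's proof in the untwisted case, but the twist accounting is delicate and pervasive, especially in matching the twist on $\partial_{v_\infty}$ (via the uniformizer $-1/t$) to the twist on $\cores_{\kappa(x)/F} \circ \partial_{v_x}$ for the reciprocity step.
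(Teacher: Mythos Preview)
Your proposal is correct and follows essentially the same approach as the paper: the paper's proof is simply a reference to Rost's argument \cite[Theorem~2.3]{Rost96} together with the remarks that (i) all twists match up to canonical isomorphisms and (ii) the Milnor relation $[ab]=[a]+[b]$ is replaced by $[ab]=[a]+[b]+\eeta[a][b]$, which is harmless since $\eeta$ commutes with the data via \ref{itm:R3e}. Your outline is a faithful unpacking of Rost's strategy in this setting, and your identification of the twist bookkeeping (via \ref{itm:R4a}) as the principal obstacle matches the paper's remark~(i) exactly.
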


\begin{Par}
Axiom \ref{itm:FD} enables one to write down the differential $d$ of the soon-to-be-defined complex $C_*(X,M,\VV_X)$, axiom \ref{itm:C} guarantees that $d\circ d = 0$, property \ref{itm:(H)} yields the homotopy invariance of the Chow groups $A_*(X,M,\VV_X)$ and finally \ref{itm:(RC)} is needed to establish proper pushforward.

\end{Par}

\begin{Par}

For an integral scheme $X$ and $\VV_X$ a virtual bundle over $X$, we put
\begin{center}
$A^0(X,M,\VV_X)=\ker d= \displaystyle \bigcap_{x\in X^{(1)}} \ker \partial^\xi_x \subset M(\xi_X,\VV_X)$.
\end{center}
\end{Par}

\begin{The} \label{Thm2.3}

Let $M$ be a Milnor-Witt cycle premodule over the perfect field $k$. Then $M$ is a cycle module if and only if the following properties \ref{itm:(FDL)} and \ref{itm:(WR)} hold for all fields $F$ over $k$ and all virtual $F$-vector space $\VV_F$.
\begin{description}
\item  [\namedlabel{itm:(FDL)}{(FDL)}] {\sc Finite support of divisors on the line}. Let $\rho \in M(F(t),\VV_{F(t)})$. Then $\partial_v(\rho)=0$ for all but finitely many valuations $v$ of $F(t)$ over $F$.
\item [\namedlabel{itm:(WR)}{(WR)}] {\sc Weak Reciprocity}. Let $\partial_\infty$ be the residue map for the valuation of $F(t)/F$ at infinity. Then
\begin{center}
$\partial_\infty(A^0(\AAA_F^1, M , \VV_{\AAA_F^1}))=0$.
\end{center}
\end{description}
\end{The}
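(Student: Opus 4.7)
Assume $M$ is a cycle module. Then \ref{itm:(FDL)} follows immediately by applying \ref{itm:FD} to the normal scheme $\AAA^1_F$. For \ref{itm:(WR)}, I would invoke Proposition \ref{Prop2.2}(\ref{itm:(RC)}) applied to the proper curve $\PP^1_F$: any $\rho \in A^0(\AAA^1_F, M, \VV)$ has vanishing residues at all finite points, so reciprocity on $\PP^1_F$ collapses to $\cores_{\kappa(\infty)/F}\partial_\infty(\rho) = 0$; since $\kappa(\infty) = F$, the corestriction is the identity and $\partial_\infty(\rho) = 0$.

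\textbf{Sufficiency, first stage: homotopy and curve reciprocity.} Assume (FDL) and (WR). Following the strategy of \cite[Theorem 2.3]{Rost96}, I would first establish \ref{itm:(H)} and then \ref{itm:(RC)}. For (H), the relation $d \circ \res = 0$ is immediate from \ref{itm:R3c}, since every valuation centered at a finite point of $\AAA^1_F$ restricts trivially to $F$. Surjectivity of $d$ and injectivity of $\res$ transfer from the Milnor-Witt K-theory case (Theorem \ref{KMWhmtpyInv}) through the $\KMW$-module structure \ref{itm:D3}, with specialization maps $s^{\pi}_v$ furnishing splittings. The crucial exactness in the middle is where (WR) plays its role: an element of $\ker d$ lies in $A^0(\AAA^1_F, M, \VV)$, so (WR) kills its residue at $\infty$, whereupon a specialization/lifting argument (parallel to Theorem \ref{KMWhmtpyInv}) recovers a preimage under $\res$. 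With (H) in hand, reciprocity for $\PP^1_F$ is obtained by a direct comparison using $\partial_\infty$, and (RC) for an arbitrary proper curve $X/F$ follows by choosing a finite morphism $\pi:X \to \PP^1_F$ and applying \ref{itm:R3b} together with functoriality of corestriction.

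\textbf{Sufficiency, second stage: (FD) and (C).} Next I would prove (FD) by induction on $\dim X$. For $X = \AAA^n_F$, the projection $\AAA^n_F \to \AAA^{n-1}_F$ combined with \ref{itm:R3a} (controlling ramification) and (FDL) applied to the generic fibre yields finite support after a standard noetherian accumulation argument. For a general normal integral $X$, Noether normalization produces a finite surjective morphism $X \to \AAA^n$, and \ref{itm:R3b} transports the finite support property. Finally, (C) for an integral local scheme of dimension $2$ with closed point $x_0$ follows from (RC): each $x \in X^{(1)}$ corresponds to a curve $\overline{\{x\}}$ whose normalization is proper over $\Spec \kappa(x_0)$, and the double residue sum $\sum_x \partial^x_{x_0} \circ \partial^\xi_x$ factors as a reciprocity sum across these normalizations, vanishing by (RC) already established.

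\textbf{Main obstacle.} The overall skeleton parallels Rost's argument, but the real difficulty lies in the systematic bookkeeping of virtual vector bundle twists that has no counterpart in the classical case. Every base-change identity, every projection and every specialization above introduces canonical switch isomorphisms among virtual bundles (of the form $\VV + \mathcal{W} \simeq \mathcal{W} + \VV$, $-\NN_v + \AAA^1 \simeq 0$, $\Omega_{F(t)/k} \simeq \AAA^1 + \Omega_{F/k}$) which must be composed coherently. The rule \ref{itm:R4a}, together with the $\epsilon$-commutativity described in \ref{RingStructure}, governs this bookkeeping, and its consistent use will be unavoidable whenever determinant bundles transform nontrivially---most prominently when reconciling the choices of uniformizer at $\infty$ in the derivation of (H), and in the finite pullbacks proving (FD) where normal bundles $\NN_v$ and $\NN_w$ must be compared under ramification via \ref{itm:R3a}.
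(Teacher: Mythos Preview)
Your outline follows the same route as the paper---both simply defer to Rost's argument in \cite[Theorem 2.3]{Rost96} and insist that the twist bookkeeping goes through---so on the architectural level you are aligned. The one point the paper singles out that you do not mention is the replacement of Milnor's relation $[ab]=[a]+[b]$ by the Milnor--Witt relation $[ab]=[a]+[b]+\eeta[a][b]$; this enters in the explicit construction of preimages under $d$ in the proof of \ref{itm:(H)}, and the extra $\eeta$-term is harmless precisely because $\eeta$ commutes with all the data (rule \ref{itm:R3e}). Your appeal to Theorem \ref{KMWhmtpyInv} glosses over this, but it is exactly where the Milnor--Witt setting differs from Rost's.

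There is, however, a genuine error in your sketch of (C). You claim that for $X$ integral local of dimension~$2$ with closed point $x_0$, each curve $\overline{\{x\}}$ for $x\in X^{(1)}$ has normalization \emph{proper} over $\Spec\kappa(x_0)$, so that \ref{itm:(RC)} applies directly. This is false: $\overline{\{x\}}$ is a one-dimensional local scheme (its normalization is semi-local), not a proper curve, and \ref{itm:(RC)} says nothing about it. Rost's actual reduction is different: one first passes, via a finite surjection and \ref{itm:R3b}, to the localization of $\AAA^2_{\kappa(x_0)}$ at the origin, and then exploits the fibration $\AAA^2\to\AAA^1$ together with \ref{itm:(H)} on the generic fibre and reciprocity on the (compactified) special fibres to obtain the vanishing. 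So \ref{itm:(RC)} does enter, but through the fibres of an auxiliary projection, not through the curves $\overline{\{x\}}$ themselves. A smaller point: your necessity argument for \ref{itm:(WR)} invokes Proposition~\ref{Prop2.2}\ref{itm:(RC)}, but the paper proves Proposition~\ref{Prop2.2} and Theorem~\ref{Thm2.3} simultaneously, so you should check that the derivation of \ref{itm:(RC)} from (FD)+(C) does not itself pass through \ref{itm:(WR)}.
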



%
%
%
%


The proofs of Proposition \ref{Prop2.2} and Theorem \ref{Thm2.3} are almost the same as Rost's (see \cite[Theorem 2.3]{Rost96}). Indeed, we can check that all involved twists match (up to canonical isomorphisms) and, of course, we use the identity ${[ab]=[a]+[b]+\eeta[a][b]}$ every time Rost use ${[ab]=[a]+[b]}$ (this causes no harm to the proof since $\eeta$ commutes with our data).

\begin{The}\label{KMWisModule}

The Milnor-Witt K-theory $\KMW$ is a MW-cycle module.
\end{The}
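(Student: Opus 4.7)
The plan is to invoke Theorem \ref{Thm2.3}: since Theorem \ref{KMWisModule} follows Theorem \ref{defMWmodules}'s verification that $\KMW$ is already a MW-cycle \emph{premodule}, it suffices to check the two conditions \ref{itm:(FDL)} and \ref{itm:(WR)} for all fields $F/k$ and all virtual $F$-vector spaces $\VV_F$.

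For \ref{itm:(FDL)}, the argument is essentially the one already sketched in paragraph \ref{KMWFD}. Any element $\rho \in \KMW(F(t),\Om_{F(t)/k}+\VV_{F(t)})$ is, after untwisting, a finite sum of products of symbols $[a_1,\dots,a_n]$ with $a_i \in F(t)^\times$ and powers of $\eeta$. For each such $a_i$, only finitely many valuations $v$ of $F(t)/F$ satisfy $v(a_i)\neq 0$, so outside a finite set of valuations every $a_i$ is a unit; by the defining formulas of \ref{KMWresidue} together with the Leibniz-type rule \ref{itm:R3f}, the residue $\partial_v$ then vanishes on $\rho$. This yields \ref{itm:(FDL)}.

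For \ref{itm:(WR)}, I would deduce the vanishing of $\partial_\infty$ on $A^0(\AAA^1_F,\KMW,\VV_{\AAA^1_F})$ from the homotopy exact sequence \ref{KMWhmtpyInv}. Let $\rho \in \KMW(F(t),\Om_{F(t)/k}+\VV_{F(t)})$ satisfy $\partial_{v_x}(\rho)=0$ for every closed point $x$ of $\AAA^1_F$; by exactness of that sequence, $\rho$ lies in the image of the restriction map
\[
i_*: \KMW(F,\AAA^1_F+\Om_{F/k}+\VV_F)\to \KMW(F(t),\Om_{F(t)/k}+\VV_{F(t)}).
\]
Now the construction of transfer maps recalled in \ref{kMWTransfert} relies precisely on the fact that $\partial_\infty^{-1/t}$ vanishes on $\operatorname{im}(i_*)$ (otherwise $\operatorname{Tr}^{F(x)}_E$ would not be well-defined on a choice of preimage). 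Hence $\partial_\infty(\rho)=0$, which is \ref{itm:(WR)}.

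Apart from these two verifications, nothing new is required, as the cycle premodule axioms were already checked. The only point demanding genuine care, as usual in the non-oriented setting, is to track the canonical isomorphisms $\Theta$ of virtual vector bundles that relate normal bundles, cotangent twists and the $\AAA^1$-direction; but these match by the very construction of the twisted residue and restriction maps in Section \ref{MainExemple}, so no new computation is needed beyond bookkeeping.
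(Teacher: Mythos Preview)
Your proposal is correct and takes essentially the same route as the paper: the paper's proof simply notes that $\KMW$ is already a premodule and then says it suffices to verify \ref{itm:FD} and \ref{itm:(H)}, referring to \S\ref{KMWFD} and Theorem~\ref{KMWhmtpyInv}; you instead invoke Theorem~\ref{Thm2.3} and check \ref{itm:(FDL)} and \ref{itm:(WR)}, but your \ref{itm:(FDL)} argument is exactly the finiteness observation of \S\ref{KMWFD}, and your \ref{itm:(WR)} argument unpacks the implication \ref{itm:(H)}$\Rightarrow$\ref{itm:(WR)} (via $\partial_\infty\circ i_*=0$, which is \ref{itm:R3c}). One minor slip: the premodule status of $\KMW$ is the content of the theorem immediately preceding Theorem~\ref{KMWisModule}, not of Definition~\ref{defMWmodules}.
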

\begin{proof}
We already know that it is a Milnor-Witt cycle premodule. For the two remaining axioms, it suffices to prove \ref{itm:FD} and \ref{itm:(H)} which are true (see Section §\ref{KMWFD} and Theorem \ref{KMWhmtpyInv}; see also \cite[Theorem 3.24]{Mor12}). 
\end{proof}

\section{The Five Basic Maps} \label{FiveBasic}

The purpose of this section is to introduce the cycle complexes and each operation on them needed further on. Note that the five basic maps defined below are analogous to those of Rost (see \cite[§3]{Rost96}); they are the basic foundations for the construction of more refined maps such as Gysin morphisms (see Section \ref{GysinMorphisms}).
\begin{Par}

Let $M$ and $N$ be two Milnor-Witt cycle modules over $k$, let $X$ and $Y$ be two schemes, let $\VV_X$ and $\VV_Y$ be two virtual bundles over $X$ and $Y$ respectively, and let $U\subset X$ and $V\subset Y$ be subsets. Given a morphism
\begin{center}
$\alpha:\displaystyle \bigoplus_{x\in U} M(x,\VV_X) \to \displaystyle \bigoplus_{y\in V} M(y, \VV_Y)$,
\end{center}
we write $\alpha^x_y:M(x,\VV_X) \to M(y, \VV_Y)$ for the components of $\alpha$.
\end{Par}

%

\begin{Par}
{\sc Milnor-Witt cycle complexes}.
Let $M$ be a Milnor-Witt cycle module, let $X$ be a scheme, $\VV_X$ be a virtual bundle over $X$ and $p$ be an integer. Put $X_{(p)}$ the set of $p$-dimensional points of $X$. Define
\begin{center}

$C_p(X,M,\VV_X)=\displaystyle \bigoplus_{x\in X_{(p)}}M(x,\VV_X)$
\end{center}
and
\begin{center}
$d:C_p(X,M,\VV_X)\to C_{p-1}(X,M,\VV_X)$
\end{center}
where $d^x_y=\partial^x_y$ as in \ref{2.0.1}. This definition makes sense by axiom \ref{itm:FD}.
\end{Par}

\begin{Pro} \label{dod=0}
With the previous notations, we have $d\circ d=0$.
\end{Pro}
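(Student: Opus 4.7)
The plan is to argue componentwise and reduce to axiom \ref{itm:C}. Fixing $x \in X_{(p)}$ and $z \in X_{(p-2)}$, the $(x,z)$-component of $d\circ d$ is the finite sum
$$(d \circ d)^x_z = \sum_{y \in X_{(p-1)}} \partial^y_z \circ \partial^x_y,$$
finiteness being supplied by axiom \ref{itm:FD} applied first to $\overline{\{x\}}$ and then to each relevant $\overline{\{y\}}$. I want, for each such pair $(x,z)$, to identify this sum with the one appearing in axiom \ref{itm:C} on an appropriate $2$-dimensional local scheme.

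First I would reduce to the case $z \in \overline{\{x\}}$: if $z$ does not lie in $\overline{\{x\}}$ then for every $y$ in the sum either $y \notin \overline{\{x\}}$ (so $\partial^x_y = 0$) or else $y \in \overline{\{x\}}$ and hence $z \notin \overline{\{y\}} \subseteq \overline{\{x\}}$ (so $\partial^y_z = 0$). After replacing $X$ by $\overline{\{x\}}$, which is integral with generic point $x$ and still contains $z$, I localize at $z$ to form $W = \Spec \mathcal{O}_{X,z}$. Since $X$ is essentially of finite type over $k$, this $W$ is an integral local scheme of dimension exactly $2$, with generic point $x$ and closed point $z$. By construction $W^{(1)}$ is the set of $y \in X_{(p-1)}$ specializing to $z$, and the remaining $y$ in the global sum automatically give $\partial^y_z = 0$.

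The only real technical point is to verify that each $\partial^x_y$ and $\partial^y_z$ is computed identically on $X$ and on $W$. This rests on the fact that normalization commutes with localization: the normalization $\widetilde{X} \to X$ pulled back to $W$ is the normalization of $W$, and likewise for the normalization of $\overline{\{y\}}$; hence the points of the normalizations lying over $y$ (respectively over $z$), and the corresponding valuation-theoretic residues $\partial_{z'}$ together with the corestrictions $\cores_{\kappa(z')/\kappa(y)}$, are literally the same data in both settings. Granted this compatibility, axiom \ref{itm:C} applied to $W$ and to the restriction of $\VV_X$ yields
$$\sum_{y \in W^{(1)}} \partial^y_z \circ \partial^x_y = 0,$$
which is exactly $(d\circ d)^x_z$. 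The main (indeed only) obstacle is the bookkeeping for the normalization–localization compatibility; everything else is a formal reduction.
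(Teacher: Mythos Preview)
Your proposal is correct and follows exactly the approach the paper invokes: the paper simply cites \cite[\S3.3]{Rost96} and notes that axiom \ref{itm:C} is needed, and Rost's argument there is precisely the componentwise reduction you describe --- restrict to $\overline{\{x\}}$, localize at $z$ to obtain a $2$-dimensional integral local scheme, and apply \ref{itm:C}. You have spelled out the details (including the normalization--localization compatibility) that the paper leaves implicit in its reference.
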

\begin{proof}
Same as in \cite[§3.3]{Rost96}. Axiom \ref{itm:C} is needed.
\end{proof}

\begin{Def} \label{DefinitionComplex}
The complex $(C_p(X,M,\VV_X),d)_{p\geq 0}$ is called the {\em Milnor-Witt complex of cycles on $X$ with coefficients in $M$}.
\end{Def}

\begin{Par}{\sc Pushforward}
Let $f:X\to Y$ be a $k$-morphism of schemes, let $\VV_Y$ be a virtual bundle over $Y$ and denote by $\VV_X$ its pullback along $f$. Define
\begin{center}

$f_*:C_p(X,M,\VV_X)\to C_p(Y,M, \VV_Y)$
\end{center}
as follows. If $y=f(x)$ and if $\kappa(x)$ is finite over $\kappa(y)$, then $(f_*)^x_y=\cores_{\kappa(x)/\kappa(y)}$. Otherwise, $(f_*)^x_y=0$.
\end{Par}

\begin{Par}{\sc Pullback} \label{pullbackBasicMap}
Let $f:X\to Y$ be an {\em essentially smooth} morphism of schemes. Let $\VV_Y$ a virtual bundle over $Y$ and $\VV_X$ be its pullback along $f$. Suppose $X$ connected and denote by $s$ the relative dimension of $f$. Define
\begin{center}
$f^*:C_p(Y,M,\VV_Y) \to C_{p+s}(X,M,- \TT_{X/Y}+\VV_X)$
\end{center}
as follows. If $f(x)=y$, then $(f^*)^y_x=\Theta\circ \res_{\kappa(x)/\kappa(y)}$, where $\Theta$ is the canonical isomorphism induced by $\TT_{\Spec\kappa(x)/\Spec\kappa(y)}\simeq \TT_{X/Y} \times_X \Spec \kappa(x)$. Otherwise, $(f^*)^y_x=0$. If $X$ is not connected, take the sum over each connected component.

\end{Par}

%
%
%
%
%
%
%
%
%
%
%

\begin{Rem}
The fact that the morphism $f$ is (essentially) smooth implies that there are no multiplicities to consider. We do not consider the case of flat morphisms in this paper (this can be done after studying the multiplicities mentioned in Claim \ref{StrongR1c}). However, we define Gysin morphisms for lci projective morphisms in Section \ref{GysinMorphisms}.
\end{Rem}

\begin{Par}{\sc Multiplication with units}
Let $a_1,\dots, a_n$ be global units in $\mathcal{O}_X^*$, let $\VV_X$ be a virtual bundle. Define
\begin{center}
$[a_1,\dots, a_n]:C_p(X,M,\VV_X) \to C_p(X,M,n\cdot \AAA^1_X+\VV_X)$
\end{center}
as follows. Let $x$ be in $X_{(p)}$ and $\rho\in M(\kappa(x),\Om_{ \kappa(v)/k}+\VV_x)$. We consider\footnote{Instead of multiplying by $\langle -1 \rangle$ and using \ref{itm:R4a}, we could make a canonical choice of isomorphism with determinant $(-1)$ and use \ref{itm:D1} (by \ref{itm:R4a}, any choice will work).} $\langle-1 \rangle^{np}[a_1(x),\dots, a_n(x)]$ as an element of ${\KMW (\kappa(x),n\cdot \AAA^1_{\kappa(x)})}$.
If $x=y$, then put $[a_1,\dots , a_n]^x_y(\rho)=\Theta(\langle-1 \rangle^{np}[a_1(x),\dots , a_n(x)]\cdot \rho) $ where $\Theta$ is induced by the canonical isomorphism $n\cdot \AAA^1_{\kappa(x)}+\Om_{\kappa(x)/k}\simeq \Om_{\kappa(x)/k}+n\cdot  \AAA^1_{\kappa(x)}$. Otherwise, put $[a_1,\dots , a_n]^x_y(\rho)=0$.

\end{Par}

\begin{Par}{\sc Multiplication with $\eeta$}
Define
\begin{center}

$\eeta:C_p(X,M,\VV_X)\to C_p(X,M,-\AAA^1_X+\VV_X)$
\end{center}
as follows. If $x=y$, 
then $\eeta^x_y(\rho)
=\gamma_{\eeta}(\rho)$. 
Otherwise, $\eeta^x_y(\rho)=0$.

\end{Par}

\begin{Par}{\sc Boundary maps} \label{BoundaryMaps}
Let $X$ be a scheme of finite type over $k$ with a virtual bundle $\VV_X$, let $i:Z\to X$ be a closed immersion and let $j:U=X\setminus Z \to X$ be the inclusion of the open complement. We will refer to $(Z,i,X,j,U)$ as a boundary triple and define
\begin{center}

$\partial=\partial^U_Z:C_p(U,M,\VV_U) \to C_{p-1}(Z,M,\VV_Z)$
\end{center}
by taking $\partial^x_y$ to be as the definition in \ref{2.0.1} with respect to $X$. The map $\partial^U_Z$ is called the boundary map associated to the boundary triple, or just the boundary map for the closed immersion $i:Z\to X$.

\end{Par}

\begin{Par}{\sc Generalized correspondences} \label{GeneralizedCorr}
We will use the notation
\begin{center}
$\alpha  : [X,\VV_X] \bullet\!\!\! \to [Y,\VV_T]$

\end{center}
or simply 
\begin{center}
$\alpha  : X \bullet\!\!\! \to Y$

\end{center}
to denote maps of complexes which are sums of composites of the five basics maps $f_*$, $ g^*$, $ [a]$, $\eeta$, $ \partial$ for schemes over $k$.
Unlike Rost in \cite[§3]{Rost96}, we look at these morphisms up to quasi-isomorphisms so that a morphism $\alpha  : X \bullet\!\!\! \to Y$ may be a weak inverse of a well-defined morphism of complexes.

\end{Par}

\section{Compatibilities} \label{Compatibilities}
In this section we establish the basic compatibilities for the maps considered in the last section. Fix $M$ a Milnor-Witt cycle module.

\begin{Pro} \label{Prop4.1}

\begin{enumerate}
\item Let $f:X\to Y$ and $f':Y\to Z$ be two morphisms of schemes. Then
\begin{center}
$(f'\circ f)_*=f'_* \circ f_*$.

\end{center} 
\item Let $g:Y\to X$ and $g':Z\to Y$ be two essentially smooth morphisms. Then (up to the canonical isomorphism given by $\TT_{Z/X}\simeq \TT_{Z/Y}+(g')^*\TT_{Y/Z}$):
\begin{center}

$(g\circ g')^*=g'^*\circ g^*$.
\end{center}
\item Consider a pullback diagram
\begin{center}

$\xymatrix{
U \ar[r]^{g'} \ar[d]_{f'} & Z \ar[d]^f \\
Y \ar[r]_{g} & X
}$
\end{center}
with $f,f',g,g'$ as previously. Then
\begin{center}

$g^*\circ f_* = \Theta \circ f'_* \circ g'^*$
\end{center}
where $\Theta$ is the canonical isomorphism induced by $\TT_{U/Z} \simeq \TT_{Y/X}\times_Y U$.
\end{enumerate}
\end{Pro}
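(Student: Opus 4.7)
The plan is to verify each of the three statements pointwise on matrix components, reducing to the corresponding rule in the Milnor-Witt cycle premodule axioms.

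For (1), I fix points $x \in X$ and $z \in Z$ and consider the component from $M(x,\VV_X)$ to $M(z,\VV_Z)$. Both $(f' \circ f)_*$ and $f'_* \circ f_*$ have this component equal to zero unless $z = f'(f(x))$ and $\kappa(x)/\kappa(z)$ is a finite extension. In the nontrivial case, writing $y = f(x)$, the multiplicativity of field degrees shows that $\kappa(x)/\kappa(z)$ is finite iff both $\kappa(x)/\kappa(y)$ and $\kappa(y)/\kappa(z)$ are finite, and then \ref{itm:R1b} gives the desired identity $\cores_{\kappa(x)/\kappa(z)} = \cores_{\kappa(y)/\kappa(z)} \circ \cores_{\kappa(x)/\kappa(y)}$.

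Part (2) proceeds analogously using \ref{itm:R1a} on each component. The only extra task is to match the twists: both sides land in a complex on $Z$, the left-hand side with twist $-\TT_{Z/X}+\VV_Z$ and the right-hand side with twist $-\TT_{Z/Y}+(g')^*(-\TT_{Y/X})+\VV_Z$. The short exact sequence of relative differentials for the smooth tower $Z \to Y \to X$ splits in the Grothendieck group of virtual bundles and yields the canonical isomorphism $\TT_{Z/X} \simeq \TT_{Z/Y} + (g')^*\TT_{Y/X}$, which provides the identification between the two twists; by \ref{itm:D1} (and consistency from \ref{itm:R4a}) this identification is canonical and compatible with restriction to each residue field $\kappa(z)$.

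For (3), I fix $z \in Z$ and $y \in Y$, set $x = f(z)$, and assume $g(y) = x$ (otherwise both sides have zero component at $(z,y)$). Writing $E = \kappa(x)$, $F = \kappa(z)$, $L = \kappa(y)$ and $R = F \otimes_E L$, the primes of $R$ are in bijection with the points $u \in U = Z \times_X Y$ lying simultaneously over $z$ and $y$, with $\kappa(u) \simeq R/p$. The hypothesis that $g$ is essentially smooth forces $L/E$ to be separable (smooth morphisms induce geometrically regular fibers, and this passes through the cofiltered limit defining essential smoothness), so the hypothesis of \ref{itm:R1c} is met. Applying \ref{itm:R1c} yields
\begin{center}
$\res_{L/E} \circ \cores_{F/E} = \displaystyle\sum_{p \in \Spec R} \cores_{\kappa(R/p)/L} \circ \res_{\kappa(R/p)/F}$,
\end{center}
which is exactly the component at $(z,y)$ of the identity $g^* \circ f_* = \Theta \circ f'_* \circ g'^*$, once the twists are identified via the base-change isomorphism $\TT_{U/Z} \simeq (f')^*\TT_{Y/X}$ coming from the cartesian square. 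The main obstacle is not any of the pointwise identities, which are direct from the premodule axioms, but rather bookkeeping: one must track carefully how every canonical isomorphism of virtual bundles introduced by the basic operations composes into the single identification $\Theta$ stated in the proposition, invoking \ref{itm:R4a} to absorb these coherence data into the definitions of pushforward and pullback.
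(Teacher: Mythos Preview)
Your proof is correct and follows the same approach as the paper's own proof, which simply cites \ref{itm:R1b}, \ref{itm:R1a}, and \ref{itm:R1c} for parts (1), (2), and (3) respectively (referring to \cite[Proposition 4.1]{Rost96} for details in (3)). You have essentially supplied the componentwise verification that the paper leaves implicit, including the observation that essential smoothness of $g$ forces separability of the residue field extension $\kappa(y)/\kappa(x)$ so that \ref{itm:R1c} applies.
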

\begin{proof}
\begin{enumerate}
\item This is clear from the definition and by \ref{itm:R1b}.

\item The claim is trivial by \ref{itm:R1a} (again, there are no multiplicities).
\item This reduces to the rule \ref{itm:R1c} (see \cite[Proposition 4.1]{Rost96}).
\end{enumerate}
\end{proof}

\begin{Pro} \label{Lem4.2}

Let $f:Y\to X$ be a morphism of schemes. If $a$ is a unit on $X$, then
\begin{center}

$f_*\circ [\tilde{f}^*(a)]=[a]\circ f_*$
\end{center} where $\tilde{f}^*:\mathcal{O}^*_X\to \mathcal{O}^*_Y$ is induced by $f$.
%

\end{Pro}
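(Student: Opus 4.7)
The plan is to verify the identity componentwise at each pair $(y,x)$ and reduce to the projection formula \ref{itm:R2b}. Since $(f_*)^y_x$ vanishes unless $x=f(y)$ with $\kappa(y)/\kappa(x)$ a finite extension, and since both composites preserve the dimension index $p$, it suffices to treat pairs $y\in Y_{(p)}$, $x=f(y)\in X_{(p)}$ with induced finite field extension $\phi\colon \kappa(x)\to \kappa(y)$. For such a pair, $(f_*)^y_x$ is the corestriction $\phi^*$, and the value of the unit $\tilde{f}^*(a)$ at $y$ equals $\phi_*(a(x))\in \kappa(y)^\times$, so that $[\tilde{f}^*(a)(y)]=\phi_*([a(x)])$ in $\KMW(\kappa(y),\AAA^1_{\kappa(y)})$.

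Unfolding the definitions, the two sides applied to $\mu\in M(y,\VV_Y)$ read
\begin{center}
$(f_*\circ [\tilde{f}^*(a)])^y_x(\mu) = \phi^*\bigl( \Theta\bigl( \langle -1\rangle^p \cdot \phi_*([a(x)]) \cdot \mu\bigr)\bigr)$, \\
$([a]\circ f_*)^y_x(\mu) = \Theta\bigl( \langle -1\rangle^p \cdot [a(x)] \cdot \phi^*(\mu)\bigr)$.
\end{center}
The scalar $\langle -1\rangle^p \in \KMW(\kappa(x),0)$ satisfies $\phi_*(\langle -1\rangle^p)=\langle -1\rangle^p$, so the key inner identity to check is
\begin{center}
$\phi^*\bigl(\phi_*(\langle -1\rangle^p \cdot [a(x)])\cdot \mu\bigr) = \langle -1\rangle^p \cdot [a(x)] \cdot \phi^*(\mu)$,
\end{center}
which is exactly the projection formula \ref{itm:R2b} applied to the class $\langle -1\rangle^p [a(x)] \in \KMW(\kappa(x),\AAA^1_{\kappa(x)})$.

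What remains is to move the twist isomorphism $\Theta$ (coming from the canonical swap $\AAA^1+\Om\simeq \Om+\AAA^1$) past $\phi^*$ on the left-hand side. This follows from the functoriality axiom \ref{itm:D1}: the analogous swap defined over $\kappa(x)$ pulls back along $\phi$ to the swap over $\kappa(y)$, so the two incarnations of $\Theta$ on either side correspond under $\phi^*$ by naturality. The main bookkeeping obstacle is therefore the coherent tracking of these canonical twist isomorphisms; once this is done, the algebraic content of the proposition is a single invocation of the projection formula \ref{itm:R2b}.
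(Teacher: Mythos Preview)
Your proof is correct and follows the same approach as the paper, which simply cites \ref{itm:R2b}; you have merely unfolded the componentwise argument explicitly. One small remark: the commutation of the swap isomorphism $\Theta$ with $\phi^*$ is actually already packaged into the rigorous form of \ref{itm:R2b} (see the paper's remark that \ref{itm:R2b} should be read as $\Theta\circ \phi^*\circ \Theta' \circ \gamma_{\phi_*(x)}=\gamma_x\circ \phi^*$), so you need not invoke \ref{itm:D1} separately.
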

\begin{proof}

This comes from \ref{itm:R2b}.
%
\end{proof}

\begin{Pro} \label{Lem4.3}

Let $a$ be a unit on a scheme $X$.
\begin{enumerate}
\item Let $g:Y\to X$ be an essentially smooth morphism. One has 
\begin{center}

$g^*\circ [a]=[\tilde{g}^*(a)]\circ g^*$.
\end{center}
\item Let $(Z,i,X,j,U)$ be a boundary triple. One has
\begin{center}

$\partial^U_Z \circ [\tilde{j}^*(a)]=\epsilon  [\tilde{i}^*(a)]\circ \partial^U_Z$.
\end{center}
Moreover,
\begin{center}
$\partial^U_Z \circ \eeta=\eeta \circ \partial^U_Z$.

\end{center}

\end{enumerate}
\end{Pro}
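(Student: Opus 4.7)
Both identities in the proposition should be established componentwise: fix a pair of points and verify the equality of the corresponding matrix entries of the two compositions, then sum. The underlying principle in every case is that each of the five basic maps commutes with the $\KMW$-action on $M$ in an appropriate sense, as governed by the projection formulae \ref{itm:R2a}, \ref{itm:R2b}, \ref{itm:R2c} and the residue-action relation \ref{itm:R3e}.

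For part (1), let $x \in X$ with $g(x) = y$, so $\dim(x) = \dim(y) + s$. By definition, $(g^*)^y_x = \Theta \circ \res_{\kappa(x)/\kappa(y)}$ and $[a]^y_y(\rho)$ is obtained by applying $\gamma_{\langle -1\rangle^{\dim(y)}[a(y)]}$ up to a switch isomorphism. Rule \ref{itm:R2a} gives $\res_{\kappa(x)/\kappa(y)} \circ \gamma_{[a(y)]} = \gamma_{[a(x)]} \circ \res_{\kappa(x)/\kappa(y)}$, and since the restriction of $a(y)$ to $\kappa(x)$ is exactly $\tilde{g}^*(a)(x)$, this yields the claimed commutation. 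What remains is to check that the twist isomorphisms $\Theta$ attached to $g^*$ and to $[a]$ assemble to the same total isomorphism on both sides, and that the sign factors $\langle -1\rangle^{\dim(y)}$ and $\langle -1\rangle^{\dim(x)}$ differ by an isomorphism accounted for via \ref{itm:R4a}.

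For the first equation of part (2), fix $x \in U_{(p)}$ and $y \in Z_{(p-1)}$. By \ref{BoundaryMaps} and \ref{2.0.1}, $\partial^x_y = \sum_z \cores_{\kappa(z)/\kappa(y)} \circ \partial_z$, where $z$ ranges over the points of the normalization of $\overline{\{x\}}$ lying above $y$. On each summand, apply \ref{itm:R3e}: since $a$ is a global unit on $X$, the element $a(x) \in \kappa(x)^\times$ is a unit at every such $z$, and $\partial_z \circ \gamma_{[a(x)]} = \gamma_{\epsilon [\overline{a(x)}]} \circ \partial_z$. Here $\overline{a(x)} = \tilde{i}^*(a)(y)$ viewed in $\kappa(z)$, and the projection formula \ref{itm:R2a} then lets one pull the multiplication by $[\tilde{i}^*(a)(y)]$ through $\cores_{\kappa(z)/\kappa(y)}$. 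Summing over $z$ yields $\partial^x_y \circ \gamma_{[\tilde{j}^*(a)(x)]} = \gamma_{\epsilon [\tilde{i}^*(a)(y)]} \circ \partial^x_y$, up to switch isomorphisms. The factors $\langle -1\rangle^p$ and $\langle -1\rangle^{p-1}$ appearing in the definitions of $[a]$ on $U_{(p)}$ and $Z_{(p-1)}$ differ by $\langle -1\rangle$, which, together with the switch isomorphism relating $-\NN+\AAA^1+\Om$ and $\Om+\AAA^1+(-\NN)$, provides exactly the factor $\epsilon$ (via \ref{itm:R4a}). The second equation is entirely parallel, using the other half of \ref{itm:R3e}, namely $\partial_v \circ \gamma_\eeta = \gamma_\eeta \circ \partial_v$. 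Because $\eeta$ carries no point-dependent datum, the definition of $\eeta$ on complexes involves no $\langle -1\rangle^{np}$ sign and consequently no extra factor appears.

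The principal obstacle is not conceptual but combinatorial: tracking the switch isomorphisms between factors of virtual vector bundles on each side. Because the rank of the twisting bundle differs by one when passing from $U_{(p)}$ to $Z_{(p-1)}$, and because switching adjacent factors introduces a sign on the level of $\KMW$-actions as explained in \ref{RingStructure}, one must verify that the net effect of reordering all the twist factors via the canonical isomorphisms matches the prescribed sign $\epsilon$ in the first equation and matches the identity in the second. This verification is routine but delicate, and \ref{itm:R4a} is the essential tool for translating the action of a switch of virtual bundles into multiplication by the corresponding element of $\KMW(\cdot,0)$.
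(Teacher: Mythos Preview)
Your approach is essentially the paper's own: reduce to components and invoke \ref{itm:R2a} for part (1) and \ref{itm:R3e} together with the projection formula for part (2). There is one concrete slip: in part (2), after applying \ref{itm:R3e} to each $\partial_z$, you want to push $\gamma_{[\overline{a}]}$ through $\cores_{\kappa(z)/\kappa(y)}$. That is an instance of \ref{itm:R2b} (corestriction commutes with multiplication by an element coming from the base), not \ref{itm:R2a}; the paper's one-line proof indeed cites \ref{itm:R2b} here. With that correction, your argument is complete and matches the paper's, only with the bookkeeping of the $\langle -1\rangle^{p}$ factors and switch isomorphisms made explicit.
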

\begin{proof}
The first result comes from \ref{itm:R2a}, the second from \ref{itm:R2b} and \ref{itm:R3e}.
\end{proof}

\begin{Pro} \label{Prop4.4}

Let $h:X\to X'$ be a morphism of schemes. Let $Z'\hookrightarrow X'$ be a closed immersion. Consider the induced diagram given by $U'=X'\setminus Z'$ and pullback:

\begin{center}
$\xymatrix{
Z \ar@{^{(}->}[r] \ar[d]^{f} & X \ar[d]^h & U \ar@{_{(}->}[l] \ar[d]^{g} \\
Z' \ar@{^{(}->}[r] & X'  & U'. \ar@{_{(}->}[l]
}$
\end{center}
\begin{enumerate}
\item If $h$ is proper, then
\begin{center}

$f_*\circ \partial^U_Z = \partial^{U'}_{Z'} \circ g_*.$
\end{center}
\item If $h$ is essentially smooth, then
\begin{center}

$f^*\circ \partial^{U'}_{Z'} = \partial^U_Z \circ g^*.$
\end{center}
\end{enumerate}

\end{Pro}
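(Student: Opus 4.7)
The plan is to verify both equalities pointwise on the components of the cycle complexes, unpacking the definitions of the five basic maps and reducing each side to an instance of the axioms \ref{itm:R3a} or \ref{itm:R3b}. Concretely, for a point $x$ of the source and $z'$ of the target, one writes out $(f_* \circ \partial^U_Z)^x_{z'} = \sum_y (f_*)^y_{z'} \circ (\partial^U_Z)^x_y$ and $(\partial^{U'}_{Z'} \circ g_*)^x_{z'} = \sum_{x'} (\partial^{U'}_{Z'})^{x'}_{z'} \circ (g_*)^x_{x'}$; both sides are zero unless $z'$ lies in $\overline{\{h(x)\}}$.

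For part (1), I would fix $x \in U$ and $z' \in Z'$, set $x' = h(x)$, and let $W = \overline{\{x\}}$, $W' = \overline{\{x'\}}$; properness of $h$ ensures $h(W) = W'$. If $\kappa(x)/\kappa(x')$ is infinite then $(g_*)^x_{x'} = 0$ by definition of pushforward, and the same residue-degree count shows $(f_*)^y_{z'} = 0$ for every specialization $y$ of $x$ above $z'$, so both sides vanish. Otherwise the restriction $W \to W'$ is generically finite, hence finite over an open neighborhood of $z'$ after passing to normalizations $\tilde W \to \tilde W'$. The equality then becomes a finite sum of instances of rule \ref{itm:R3b}, one for each valuation of $\kappa(x')$ centered at $z'$ together with its extensions to $\kappa(x)$, glued together by the functoriality \ref{itm:R1b} of corestriction through the normalizations.

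For part (2), essential smoothness of $h$ forces every valuation of $\kappa(x)$ coming from a codimension-one specialization in $Z$ to be unramified over its restriction to $\kappa(h(x))$: the fibers of an essentially smooth morphism are smooth, hence normal, so ramification indices along them are $1$. Rule \ref{itm:R3a} with $e = 1$ (so $e_\epsilon = \langle 1 \rangle = 1 \in \KMW(\kappa(w),0)$) then directly identifies the two composites on components. The twist by $-\TT_{X/Y}$ on both sides is matched via the canonical isomorphism $\TT_{U/Z} \simeq \TT_{Y/X} \times_Y U$ already invoked in Proposition \ref{Prop4.1}(3).

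The main obstacle will be the careful bookkeeping of virtual bundle twists: every residue introduces a $-\NN_v$ factor, every restriction or corestriction contributes an $\Om_{\kappa/k}$ piece, and to compare the twists on the two sides one must commute canonical switch isomorphisms through the composition of operations. Whenever an automorphism of a virtual vector bundle arises from this commutation, axiom \ref{itm:R4a} must be invoked to absorb it into multiplication by a unit $\langle \delta \rangle \in \KMW$. Once these identifications are laid out, the argument is structurally identical to Rost's proof of \cite[Proposition 4.6]{Rost96}, with the twist verifications providing the only new content.
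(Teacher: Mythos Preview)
Your componentwise approach is different from the paper's, which simply defers to Proposition~\ref{Prop4.6}: writing $d_X$ as a block matrix under the splitting $C_*(X)=C_*(Z)\oplus C_*(U)$, the boundary $\partial^U_Z$ is the off-diagonal block, and since $h_*$ (resp.\ $h^*$) preserves the $Z/U$ decomposition (because $Z=h^{-1}(Z')$), commutation with $d$ yields commutation with $\partial$ for free. What you are doing is essentially reproving that off-diagonal piece of Proposition~\ref{Prop4.6} directly; this is legitimate, and for part~(1) your sketch matches Rost's argument underlying Proposition~\ref{Prop4.6}(1).

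For part~(2), however, there is a genuine gap: invoking only \ref{itm:R3a} with $e=1$ is not sufficient. The residues $\partial^x_z$ and $\partial^{x'}_{z'}$ are defined via the normalizations of $\overline{\{x\}}$ and $\overline{\{x'\}}$ and carry corestrictions $\cores_{\kappa(\tilde z)/\kappa(z)}$ and $\cores_{\kappa(\tilde z')/\kappa(z')}$. After using \ref{itm:R3a} to exchange residue and restriction at the level of the normalizations, you must still commute $\cores_{\kappa(\tilde z)/\kappa(z)}\circ\res_{\kappa(\tilde z)/\kappa(\tilde z')}$ into $\res_{\kappa(z)/\kappa(z')}\circ\cores_{\kappa(\tilde z')/\kappa(z')}$, summed over the points $\tilde z$ lying over a given $\tilde z'$; this is exactly the base-change rule \ref{itm:R1c} (equivalently Proposition~\ref{Prop4.1}(3)), which the paper does list among the ingredients for Proposition~\ref{Prop4.6}(2). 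Your justification for $e=1$ is also imprecise: the closure $\overline{\{x\}}$ need not be smooth over $\overline{\{x'\}}$, so ``fibers smooth hence normal'' does not apply to the map you care about. What is true is that $\overline{\{x\}}$ is an irreducible component of $\overline{\{x'\}}\times_{X'}X$, and smooth base change preserves normalizations, so the normalization of $\overline{\{x\}}$ sits as a component of $\tilde W'\times_{X'}X$, which \emph{is} smooth over $\tilde W'$; that is what forces ramification index~$1$ at the relevant codimension-one points.
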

\begin{proof}
This will follow from Proposition \ref{Prop4.6}.
\end{proof}

\begin{Lem}

\label{Lem4.5}
Let $g:Y\to X$ be a smooth morphism of schemes of finite type over a field of constant fiber dimension $1$, let $\sigma:X\to Y$ be a section of $g$ and let $t\in \mathcal{O}_Y$ be a global parameter defining the subscheme $\sigma(X)$. Moreover, let $\tilde{g}:U \to X$ be the restriction of $g$ where $U=Y\setminus \sigma(X)$ and let $\partial$ be the boundary map associated to $\sigma$. Then
\begin{center}
 $\Theta \circ \partial \circ {[t]} \circ \tilde{g}^*=(\id_X)_*$ and $\partial \circ \tilde{g}^*=0$,
\end{center}
with $\Theta$ the canonical isomorphism given by $\TT_{U/X}\simeq \AAA^1_U$.

\end{Lem}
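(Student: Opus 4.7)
The plan is to verify both identities componentwise. Fix a point $x\in X_{(p)}$ and analyze which points of $U$ and of $\sigma(X)$ receive nonzero contributions from the indicated composition of basic maps.

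First I would set up the local picture. Since $g$ is smooth of relative dimension $1$ with section $\sigma$ cut out by the global function $t$, the fiber $g^{-1}(x)$ is smooth of dimension $1$ over $\kappa(x)$ and the unique component $C\subset g^{-1}(x)$ passing through $\sigma(x)$ has generic point $\eta\in U_{(p+1)}$ with $\kappa(\eta)\simeq \kappa(x)(t)$. The inclusion $\phi:\kappa(x)\hookrightarrow \kappa(\eta)$ realizes the pullback $\tilde g^*$ at the pair $(x,\eta)$ as the restriction $\phi_*$, up to the canonical twist coming from $\TT_{\eta/x}\simeq \TT_{U/X}\times_X \Spec\kappa(\eta)$. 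The codimension-one point $\sigma(x)\in \overline{\{\eta\}}$ corresponds to the discrete valuation $v$ on $\kappa(\eta)$ with uniformizer $\pi=t$ and residue field $\kappa(v)=\kappa(x)$; crucially, $v$ restricts to the trivial valuation on $\kappa(x)$ and the induced map $\overline{\phi}:\kappa(x)\to \kappa(v)$ is the identity. Generic points $u\ne \eta$ of other components of $g^{-1}(x)\cap U$ (if any) do not have $\sigma(x)$ in their closure, so their contributions to $\partial^U_{\sigma(X)}$ vanish by definition.

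For the second identity, axiom \ref{itm:R3c} applied to $\phi$ and the trivially restricted valuation $v$ gives $\partial_v\circ\phi_*=0$, proving $\partial\circ\tilde g^*=0$ componentwise.

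For the first identity, I would apply the Leibniz-type rule \ref{itm:R3f} to $[t]\in \KMW(\kappa(\eta),\AAA^1)$ and $\phi_*(\rho_0)$ for $\rho_0\in M(x,\VV_X)$. Since $\partial_v(\phi_*(\rho_0))=0$ by \ref{itm:R3c}, the only surviving term is
\[\partial_v([t]\cdot \phi_*(\rho_0)) = \partial_v([t])\cdot s^{\pi}_v(\phi_*(\rho_0)).\]
By the defining formula for residues in Milnor-Witt K-theory (see \ref{KMWresidue}), $\partial_v([t])=1\otimes \bar{t}^*\in \KMW(\kappa(v),-\NN_v)$. By rule \ref{itm:R3d} applied to $\phi$, the specialization $s^{\pi}_v(\phi_*(\rho_0))$ equals $\overline{\phi}_*(\rho_0)=\rho_0$ up to canonical twist. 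Multiplying yields $\rho_0$ up to a composite canonical isomorphism, which is precisely the $\Theta:\TT_{U/X}\simeq \AAA^1_U$ stated in the lemma. The $\langle-1\rangle^{np}$ normalization built into the basic map $[t]$ on $C_*$ should precisely match the switch isomorphism occurring when commuting the $\AAA^1$-factor past the $\Om_{\kappa(v)/k}$-factor so that both sides agree as twisted complexes.

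The main obstacle is the bookkeeping of the various canonical twists. Three different twists are in play---the $-\TT_{U/X}$ from pullback, the $\AAA^1_U$ from multiplication with units, and the $-\NN_v$ from the residue map---and they must be verified to compose into the single isomorphism $\Theta$ described in the statement. Once the pointwise identity $\partial_v([t]\cdot \phi_*(\rho_0))=\rho_0$ is established via \ref{itm:R3f}, \ref{itm:R3d}, and \ref{itm:R3c}, the rest is careful but routine tracking of these twists through the data \ref{itm:D1} and the sign convention $\langle-1\rangle^{np}$ in the definition of multiplication with units.
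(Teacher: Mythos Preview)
Your proof is correct and follows the same componentwise strategy as the paper. Two minor remarks:

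\begin{itemize}
\item The claim $\kappa(\eta)\simeq\kappa(x)(t)$ is not true in general: the fibre $g^{-1}(x)$ is an arbitrary smooth curve over $\kappa(x)$, and the component through $\sigma(x)$ need not be rational. Fortunately you never use this identification; all you need is that the valuation $v$ on $\kappa(\eta)$ corresponding to $\sigma(x)$ has uniformizer $t$, residue field $\kappa(x)$, and restricts trivially to $\kappa(x)$, which is indeed the case.
\item Your route to the key identity is slightly longer than the paper's. You invoke the Leibniz rule \ref{itm:R3f}, kill the extra terms with \ref{itm:R3c}, compute $\partial_v([t])$ explicitly in $\KMW$, and then apply \ref{itm:R3d} to evaluate the specialization $s^\pi_v(\phi_*(\rho_0))$. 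The paper instead applies \ref{itm:R3d} in one stroke: since $-t$ is also a uniformizer, \ref{itm:R3d} with $\pi=-t$ gives $\partial_v\circ\gamma_{[t]}\circ\phi_*=\overline{\phi}_*$ directly (up to the appropriate $\Theta$). Your argument is perfectly valid, just a small detour.
\end{itemize}

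On the other hand, your treatment of the points $u\neq\eta$ is cleaner than the paper's ``Otherwise'' case: you correctly observe that $\sigma(x)$ does not lie in the closure of any other component of the fibre, so $\partial^u_{\sigma(x)}=0$ by definition, with no need to appeal to \ref{itm:R3e}.
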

 \begin{proof} 
 In order to simplify the notations, we identify $X$ with $\sigma(X)$ through $\sigma$ and we forget the canonical isomorphism $\Theta$. By definition, the map $\partial \circ [t] \circ \tilde{g}^*$ is 
\begin{center}
$\displaystyle \sum_{x\in X} \sum_{y\in g^{-1}(x)\setminus \{x\}} \partial^y_x\circ [t]\circ \res_{\kappa(y)/\kappa(x)} :
\bigoplus_{x\in X_{}}M(x,\VV_X) \to\bigoplus_{x\in X_{}}M(x,\VV_X).
$
\end{center}
Fix $x\in X$ and let $y\in g^{-1}(x)\setminus \{x\}$. If $y=\xi_x$ is the generic point of $g^{-1}(x)$, then we have 
\begin{center}
${\partial^{\xi_x}_x\circ [t]\circ \res_{\kappa(\xi_x)/\kappa(x)}=\res_{\kappa(x)/\kappa(x)}=\Id}$
\end{center} according to \ref{itm:R3d}. Otherwise 
\begin{center}
${\partial^y_x\circ [t]\circ \res_{\kappa(y)/\kappa(x)}=\epsilon\cdot [t]\cdot \partial^y_x\circ \res_{\kappa(y)/\kappa(x)}=0}$
\end{center} according to \ref{itm:R3e} and \ref{itm:R3c}. The second equality is proved in a similar fashion.


 \end{proof}

\begin{Pro}\label{Prop4.6}
\begin{enumerate}
\item Let $f:X\to Y$ be a proper morphism of schemes. Then
\begin{center}

$d_Y\circ f_*= f_*\circ d_X$.
\end{center}
\item Let $g:Y\to X$ be an essentially smooth morphism. Then
\begin{center}

$g^*\circ d_X=d_Y \circ g^*$.
\end{center}
\item Let $a$ be a unit on $X$. Then
\begin{center}

$d_X \circ [a]=\epsilon[a]\circ d_X$.
\end{center}
Moreover,
\begin{center}

$d_X \circ \eeta=\eeta \circ d_X$.
\end{center}
\item Let $(Z,i,X,j,U)$ be a boundary triple. Then
\begin{center}

$d_Z\circ \partial^U_Z=-\partial^U_Z\circ d_U$.
\end{center}
\end{enumerate}

\end{Pro}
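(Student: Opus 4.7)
The plan is to prove each of the four statements pointwise: fix a pair of points and analyze the corresponding component of the relevant composite of maps, then invoke the appropriate axiom of Milnor-Witt cycle modules. Throughout, the twists by virtual bundles introduce canonical switch isomorphisms that must be tracked, but the ``algorithmic'' structure of the argument follows Rost's approach closely.

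\textbf{Parts (3) and (4) first, as they are the most direct.} For (3), fix $x \in X_{(p)}$ and $y \in X_{(p-1)}$ with $y \in \overline{\{x\}}$. On the normalization of $\overline{\{x\}}$, each point $z$ over $y$ gives a valuation $v_z$ with uniformizer and residue field. The identity $d \circ [a] = \epsilon [a] \circ d$ on the $(x,y)$-component follows from \ref{itm:R3e}, which states $\partial_v \circ \gamma_{[u]} = \gamma_{\epsilon[\overline{u}]} \circ \partial_v$ for $u$ a unit at $v$, together with \ref{itm:R3b} to handle corestriction. Similarly, $d \circ \eeta = \eeta \circ d$ uses the second clause of \ref{itm:R3e}. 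The only delicate point is the twist $n \cdot \AAA^1_X$ for $[a_1,\dots,a_n]$: one checks that the canonical switch isomorphism $n\cdot\AAA^1 + \Omega \simeq \Omega + n\cdot\AAA^1$ used in the definition of multiplication with units commutes with all other canonical identifications in play.

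For (4), we observe that on $C_*(X, M, \VV_X) = C_*(U, M, \VV_U) \oplus C_*(Z, M, \VV_Z)$ (as graded abelian groups), the differential $d_X$ has the block-triangular form
\[
d_X \;=\; d_U + d_Z + \partial^U_Z,
\]
since any point of $Z$ has its closure inside $Z$. Expanding $d_X \circ d_X = 0$ (Proposition \ref{dod=0}, which uses axiom \ref{itm:C}) and using $d_U^2 = 0$, $d_Z^2 = 0$ yields immediately $d_Z \circ \partial^U_Z + \partial^U_Z \circ d_U = 0$.

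\textbf{Parts (2) and (1) require localized reciprocity-type arguments.} For (2), fix $x \in X_{(p)}$ and $y \in Y_{(p+s-1)}$. The contributions to $g^* \circ d_X - d_Y \circ g^*$ supported at $(x,y)$ come from the fiber of $g$ over points of codimension one in $\overline{\{x\}}$, and a direct case analysis using the base change formula \ref{itm:R1c} (no multiplicities, since $g$ is essentially smooth) together with \ref{itm:R3b} yields the compatibility. The twist $-\TT_{X/Y}$ is tracked via the canonical isomorphism $\TT_{\Spec \kappa(y)/\Spec \kappa(x)} \simeq \TT_{Y/X} \times_Y \Spec \kappa(y)$, matching the one used in the definition of $g^*$ in \ref{pullbackBasicMap}.

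For (1), fix $y \in Y_{(p)}$ and $y' \in Y_{(p-1)}$. Contributions to $d_Y \circ f_* - f_* \circ d_X$ supported at $(y,y')$ come from pairs $(x, x')$ with $f(x) = y$, $f(x') = y'$. The key reduction is to replace $X$ by the proper curve $\overline{\{x\}} \cap f^{-1}(\text{neighborhood of } y')$: the sum of residues along this proper curve, followed by corestriction to $\kappa(y')$, vanishes by reciprocity for curves \ref{itm:(RC)}. Combining this with the description of $d$ via normalizations and axiom \ref{itm:R3b} for ramified extensions shows that the two composites agree.

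\textbf{Main obstacle.} The principal technical difficulty lies not in the structural arguments, which parallel Rost's, but in verifying that the canonical isomorphisms of virtual bundles introduced by twisting all commute correctly. In particular, the ramification factor $e_\epsilon$ from \ref{itm:R3a} appearing in (1), the switch isomorphisms from \ref{RingStructure} appearing in (3), and the various normal bundle identifications from \ref{RmkDefMWmodules5} appearing in (4) must all be reconciled. The use of \ref{itm:R4a} to reduce diagrams of virtual vector bundles to scalar units $\langle u \rangle$ is the main tool for handling this bookkeeping in a uniform way.
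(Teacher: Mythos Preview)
Your overall strategy---componentwise analysis following Rost---matches the paper's, and your treatments of (3) and (4) are correct and essentially identical to the paper's (the paper invokes Proposition~\ref{Lem4.3}.2 for (3) and $d\circ d=0$ for (4)).

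There is, however, a genuine gap in your handling of (2), and a related confusion between (1) and (2). For the pullback statement (2), the relevant residue axioms are \ref{itm:R3a} (ramified restriction: $\partial_w\circ\phi_*=\gamma_{e_\epsilon}\circ\overline{\phi}_*\circ\partial_v$) and \ref{itm:R3c} (trivial restriction: $\partial_w\circ\phi_*=0$), not \ref{itm:R3b}, since $g^*$ is built from restriction maps. Conversely, \ref{itm:R3b} is exactly what drives (1), and the ramification factor $e_\epsilon$ you attribute to (1) in your ``main obstacle'' paragraph actually belongs to (2). More importantly, the paper's proof of (2) uses the already-established part (1): one replaces $X$ by the normalization of $\overline{\{x\}}$, and the compatibility of $d$ with this finite (hence proper) normalization map is precisely part (1). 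Only after this reduction do \ref{itm:R3a} and \ref{itm:R3c} apply directly. Your sketch (``direct case analysis using \ref{itm:R1c} and \ref{itm:R3b}'') omits this reduction step, and without it the argument does not close: the differential $d_X$ is defined via normalization and corestriction, so comparing $g^*\circ d_X$ with $d_Y\circ g^*$ requires controlling how $g^*$ interacts with that normalization, which is a base-change-plus-pushforward problem.

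For (1) your invocation of \ref{itm:(RC)} is correct and is indeed what Rost uses in the transcendental case (the paper's terse ``Proposition~\ref{Prop4.1}.1 and \ref{itm:R3b}'' is pointing to Rost's argument, where \ref{itm:(RC)} handles the case $\kappa(x)/\kappa(f(x))$ not finite and \ref{itm:R3b} handles the finite case).
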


\begin{proof} Same as \cite[Proposition 4.6]{Rost96}. The first assertion comes from Proposition \ref{Prop4.1}.1 and \ref{itm:R3b}, the second  from \ref{itm:R3c}, Proposition \ref{Prop4.1}.3, Proposition \ref{Prop4.6}.1 and \ref{itm:R3a} (note that the proof is actually much easier in our case since there are no multiplicities to consider). The third assertion follows from the definitions and Proposition \ref{Lem4.3}.2, the fourth from the fact that $d\circ d=0$.
%
\end{proof}
\begin{Rem}

Note the quadratic form $\epsilon$ playing the role of $(-1)$ in the third formula but not in the fourth.
\end{Rem}

\section{Milnor-Witt Cycle Complexes and Chow-Witt Groups}\label{MWComplexe}
\begin{Par}
{\sc Milnor-Witt Cycle Complexes}
Let $M$ be a Milnor-Witt cycle module, let $X$ be a scheme with a virtual bundle $\VV_X$ and $p$ an integer. We have defined (see Definition \ref{DefinitionComplex})
\begin{center}

$C_p(X,M,\VV_X)=\displaystyle \bigoplus_{x\in X_{(p)}}M(x,\VV_X)$
\end{center}
with differential
\begin{center}

$d=d_X:C_p(X,M,\VV_X)\to C_{p-1}(X,M,\VV_X)$.
\end{center}
In the same way, we can define 
\begin{center}

$C^p(X,M,\VV_X)=\displaystyle \bigoplus_{x\in X^{(p)}}M(x,\VV_X)$
\end{center}
with differential
\begin{center}

$d=d_X:C^p(X,M,\VV_X)\to C^{p+1}(X,M,\VV_X)$
\end{center}
 and show that this gives us another complex.

\end{Par}
\begin{Def}

The {\em Chow-Witt group of $p$-dimensional cycles with coefficients in $M$} is defined as the $p$-th homology group of the complex $C_*(X,M,\VV_X)$ and denoted by $A_p(X,M,\VV_X)$. Similarly, we define a cohomology group $A^p(X,M,\VV_X)$ with $C^*(X,M,\VV_X)$.
\end{Def}
\begin{Par}

According to the previous section (see Proposition \ref{Prop4.6}), the morphisms $f_*$ for $f$ proper, $g^*$ for $g$ essentially smooth, multiplication by $[a_1,\dots , a_n]$ or $\eeta$, $\partial^U_Y$ (anti)commute with the differentials.

\end{Par}
\begin{Par}\label{LocalizationSequence}

Let $(Z,i,X,j,U)$ be a boundary triple. We can split the complex $C_*(X,M,\VV_X)$ as
\begin{center}

$C_*(X,M,\VV_X)=C_*(Z,M,\VV_Z)\oplus C_*(U,M,\VV_U)$
\end{center}so that there is a long exact sequence
\begin{center}

$\xymatrix{
\dots \ar[r]^-\partial &  A_p(Z,M,\VV_Z) \ar[r]^{i_*} & A_p(X,M,\VV_X) \ar[r]^{j^*} 
& A_p(U,M,\VV_U) \ar[r]^{\partial} & A_{p-1}(Z,M,\VV_Z) \ar[r]^-{i_*} & \dots.
}$
\end{center}
\end{Par}

\begin{Par}
{\sc Classical Chow-Witt Groups.} \label{ClassicalChowWittGroups}
Recall some definitions about the classical theory of Chow-Witt groups (see \cite{Fasel13}). We note that the results below are true in any characteristic.
\par Let $X$ be a smooth scheme and let $p,r\in \ZZ$. For $x\in X^{(p)}$, $\NN_x$ is a virtual $\kappa(x)$-vector space of dimension $p$. Denote $\Lambda_x=\bigwedge^p \NN_x$ its determinant and $\Lambda_x^*$ its dual.
\par The Rost-Schmid complex in Milnor-Witt K-theory $C(X, \mathbf{K}_r^{\text{MW}})$ is the complex
\begin{center}

$\dots \to \displaystyle \bigoplus_{x\in X^{(p)}} \kMW_{r-p}(\kappa(x),p,\Lambda^*_x)
\to  \displaystyle \bigoplus_{x\in X^{(p+1)}} \kMW_{r-p-1}(\kappa(x),p+1, \Lambda^*_x) \to \dots $
\end{center}
where $ \kMW_{r-p}(\kappa(x),p,\Lambda^*_x)=\KMW(\kappa(x), \Lambda_x^*+(r-p-1)\cdot \AAA^1_{\kappa(x)})$ (see also \cite[Definition 5.7]{Mor12}).
Denote by $\CHt^p(X)_r$ its $p$-th cohomology group. By definition, the classical Chow-Witt group is $\CHt^p(X)_p$ and is simply denoted by $\CHt^p(X)$. It is related with our previous constructions as follows.

\end{Par}

\begin{Pro} Let $X$ be a smooth scheme and let $p,r\in \ZZ$. Then we have a canonical isomorphism
\begin{center}

$A^p(X, \KMW, - \TT_{X/k}+r\cdot \AAA^1_X ) \simeq \CHt^p(X)_r$.
\end{center}
In particular with $r=p$,
\begin{center}

$A^p(X, \KMW, - \TT_{X/k}+p\cdot \AAA^1_X) \simeq \CHt^p(X)$
\end{center}

\end{Pro}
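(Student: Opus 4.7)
The plan is to exhibit a canonical isomorphism of the two complexes term by term and then observe that the differentials match by construction. Fix a codimension $p$ point $x\in X^{(p)}$. The $x$-component of $C^p(X,\KMW,-\TT_{X/k}+r\cdot \AAA^1_X)$ is, by the definition in Section \ref{FiveBasic},
\[
\KMW\bigl(\kappa(x),\;\Omega_{\kappa(x)/k}+(-\TT_{X/k})|_x+r\cdot \AAA^1_{\kappa(x)}\bigr).
\]
Since $X$ is smooth over the perfect field $k$, the local ring $\mathcal{O}_{X,x}$ is regular of dimension $p$, and the conormal short exact sequence
\[
0\to \NN_x\to \TT_{X/k}|_x\to \Om_{\kappa(x)/k}\to 0
\]
(with $\NN_x=\mathfrak{m}_x/\mathfrak{m}_x^2$) induces a canonical isomorphism $\TT_{X/k}|_x\simeq \NN_x+\Om_{\kappa(x)/k}$ of virtual $\kappa(x)$-vector spaces. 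Combining with the switch isomorphism one obtains the canonical identification
\[
\Om_{\kappa(x)/k}+(-\TT_{X/k})|_x+r\cdot\AAA^1_{\kappa(x)}\;\simeq\;-\NN_x+r\cdot\AAA^1_{\kappa(x)}.
\]

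The right-hand side has total rank $r-p$ and determinant $\Lambda_x^{\vee}$, so via \ref{itm:D1} one gets a canonical isomorphism
\[
\KMW\bigl(\kappa(x),\Om_{\kappa(x)/k}+(-\TT_{X/k})|_x+r\cdot\AAA^1_{\kappa(x)}\bigr)\;\simeq\;\kMW_{r-p}(\kappa(x),p,\Lambda_x^{\vee}),
\]
matching the $x$-term of the Rost--Schmid complex $C(X,\mathbf{K}_r^{\mathrm{MW}})$. Summing over $x\in X^{(p)}$ yields a canonical isomorphism of $p$-th terms of the two complexes.

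To conclude, I verify that the differentials agree. Both differentials decompose as $\sum \partial^x_y$ over pairs with $y\in\overline{\{x\}}^{(1)}$, and in both constructions the component $\partial^x_y$ is obtained from the twisted residue map $\partial_v$ of Milnor--Witt K-theory (introduced in \ref{KMWresidue}) applied along the normalisation of $\overline{\{x\}}$ at points above $y$, followed by the transfer $\cores_{\kappa(z)/\kappa(y)}$ of \ref{kMWTransfert}. The MW-cycle module structure on $\KMW$ constructed in Theorem \ref{KMWisModule} is defined precisely by these maps, so the only thing to check is that our identification of twists is compatible with the isomorphism $\NN_{x}\otimes_{\kappa(x)}\kappa(z)\simeq \NN_{v_z}+\NN_{y}\otimes_{\kappa(y)}\kappa(z)$ arising from the two steps of specialisation; this is a straightforward manipulation of short exact sequences of conormal bundles.

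The main obstacle is bookkeeping: all matching occurs only up to the canonical switch isomorphisms of virtual bundles, and one must check that the chosen isomorphism of twists intertwines the residue map of $\KMW(-,\Om_{\kappa(x)/k}+(-\TT_{X/k})|_x+\dots)$ with Morel's Rost--Schmid residue on $\kMW_{r-p}(\kappa(x),p,\Lambda_x^{\vee})$ without introducing a sign. This follows from the naturality of the conormal sequence with respect to a regular parameter defining $\overline{\{y\}}$ inside $\overline{\{x\}}$ near $y$: such a parameter simultaneously trivialises $\NN_{v_y}$ in the residue map $\partial_v$ and contributes the extra factor in the exact sequence $0\to \NN_{\overline{\{y\}}/X,y}\to \NN_y\to \NN_{v_y}\to 0$, so that the two conventions coincide after the canonical isomorphism is inserted. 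The case $r=p$ is the tautological specialisation.
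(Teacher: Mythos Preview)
Your argument is correct and follows essentially the same approach as the paper: identify each $x$-term via the conormal exact sequence $0\to \NN_x\to \TT_{X/k}|_x\to \Om_{\kappa(x)/k}\to 0$ (the paper unwinds this as a chain of determinant manipulations, but the content is identical), then conclude by noting that the differentials agree. The paper dismisses the differential compatibility in one line (``by definition''), whereas you spell out the bookkeeping with the normalisation, transfers, and conormal filtrations; this extra care is reasonable but not strictly necessary, since Morel's Rost--Schmid differential and the MW-cycle module differential for $\KMW$ are literally built from the same twisted residue and transfer maps.
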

\begin{proof}

Let $x$ be in $X^{(p)}$. We have the following canonical isomorphisms
\begin{center}

\begin{tabular}{lll}
$\kMW_{r-p}(\kappa(x),p, \Lambda_x^*)$ & $\simeq $ & $\KMW (\kappa(x), \Lambda_x^*+(r-p-1)\cdot \AAA_{\kappa(x)}),$  \\
& $\simeq $ & $\KMW (\kappa(x),  - \Lambda_x+2\cdot \AAA_{\kappa(x)} + (r-p-1) \cdot \AAA_{\kappa(x)}),$ \\
   &  $\simeq $   &  $\KMW(\kappa(x), -( \Lambda_x +(p-1)\cdot \AAA_{\kappa(x)})+r\cdot \AAA_{\kappa(x)}),$ \\
     &  $\simeq $   & $\KMW(\kappa(x), - \NN_x + r\cdot \AAA_{\kappa(x)}),$ \\
     &   $\simeq $ & $\KMW(\kappa(x), \Om_{ \kappa(x)/k} - \TT_{X/k}\times_X \kappa(x)+r\cdot \AAA_{\kappa(x)}),$ \\
     &    $\simeq $ & $ \KMW(x, - \TT_{X/k}+r\cdot \AAA_X),$

\end{tabular}

\end{center}

and so
\begin{center}

$\displaystyle \bigoplus_{x\in X^{(p)}} \kMW_{r-p} (\kappa(x),p,\Lambda^*_x) \simeq  C^p(X, \KMW,  - \TT_{X/k}+r\cdot \AAA_X)$.
\end{center}
By definition, the differentials agree, hence the result.
\end{proof}

\section{Acyclicity for Smooth Local Rings} \label{Acyclicity}
Fix $M$ a MW-cycle module over $k$. We follow \cite[§6]{Rost96}. We prove that the cohomology of the complex associated to $M$ can be computed using the Zariski sheaf given by 
\begin{center}

$U\mapsto A^0(U,M)$.
\end{center}
In a future paper, we will show a similar result corresponding to the Nisnevich topology.
\begin{The} \label{Thm6.1}

Let $X$ be an essentially smooth and semi-local scheme, and let be $\VV_X$ a virtual bundle over $X$. Then
\begin{center}

$A^p(X,M,\VV_X)=0$
\end{center}
for $p>0$.
\end{The}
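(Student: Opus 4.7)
The plan is to follow the classical Gabber--Panin strategy used by Rost in \cite[Theorem 6.1]{Rost96}, with additional care to track the twists coming from virtual vector bundles.

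First, by a continuity argument I would reduce to the case where $X$ is the semi-localization of a smooth affine $k$-variety $\bar X$ at a finite family of closed points $\{x_1,\dots,x_n\}$. Writing $X = \lim_\alpha U_\alpha$ as a cofiltered limit of open neighbourhoods $U_\alpha$ of these points, each of the five basic maps from Section \ref{FiveBasic} is compatible with filtered colimits at the level of residue fields, so $A^p(X,M,\VV_X) \cong \colim_\alpha A^p(U_\alpha,M,\VV_{U_\alpha})$. It therefore suffices to show that any class represented on some $U_\alpha$ dies after restriction to a sufficiently small neighbourhood.

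Second, given $[\rho] \in A^p(U_\alpha,M,\VV_{U_\alpha})$ with $p\geq 1$, represented by a cycle supported on a closed subscheme $Z \subset U_\alpha$ of codimension at least $p$, I would invoke the Gabber--Panin geometric presentation lemma: after shrinking $U_\alpha$ around $\{x_1,\dots,x_n\}$ and possibly refining the stratification, there exists an étale morphism $\pi : U_\alpha \to \AAA^1_S$ with $S$ smooth of dimension $\dim X -1$, such that $\pi|_Z : Z \to S$ is finite and $\pi^{-1}(\pi(Z)) = Z \sqcup Z'$ with $Z'$ disjoint from the preimages of $\{x_i\}$. This produces a Cartesian diagram in which the fibres at $0$ and at $\infty$ of $\AAA^1_S$ play complementary roles.

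Third, combining the localization long exact sequence of \ref{LocalizationSequence}, the base change compatibility Proposition \ref{Prop4.4}, and the homotopy invariance property \ref{itm:(H)} from Proposition \ref{Prop2.2}, the plan is to exhibit $[\rho]$ simultaneously as the image of a class supported on the ``zero section'' of $\AAA^1_S$ (recovering $[\rho]$ itself via the étale presentation) and as the image of a class supported at the ``section at infinity'' (which is disjoint from $Z$ and hence vanishes). This forces $[\rho] = 0$.

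The main obstacle, and the genuine departure from Rost's oriented argument, is bookkeeping the twists by virtual vector bundles throughout these manipulations. Every comparison of cycles through the various pullbacks, pushforwards, and residue maps produces canonical isomorphisms of virtual bundles by \ref{itm:D1}, and one must verify that the resulting $\Theta$-type isomorphisms compose consistently; in particular rule \ref{itm:R4a} is what guarantees that automorphisms of virtual bundles arising from these identifications act on $M$ by the expected units in $\KMW$. Once these coherences are in place, the argument proceeds in parallel to Rost's. As an alternative, one could exploit the coniveau spectral sequence sketched in the introduction to reduce the statement directly to the $\AAA^1$-homotopy invariance of $\KMW$ (Theorem \ref{KMWhmtpyInv}) at the expense of postponing the Gabber--Panin presentation to the construction of that spectral sequence.
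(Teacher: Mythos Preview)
Your proposal follows the same Gabber--Panin strategy as the paper and is correct in broad outline, but the paper organizes the argument differently and your execution of the key step is imprecise. The paper's pivot is Proposition~\ref{Prop6.4}: for smooth $X$, closed $Y\subset X$ of codimension $\geq 1$, and a finite set $S\subset Y$, there is a neighbourhood $X'$ of $S$ on which the pushforward $i_*:A_*(Y\cap X',M,\VV_{Y\cap X'})\to A_*(X',M,\VV_{X'})$ vanishes. This is proved not via an \'etale presentation to $\AAA^1_S$ but via a generic \emph{linear} projection (Lemma~\ref{Lem6.2}) making $\pi_W|X$ smooth of relative dimension $1$ near $S$ and $\pi_W|Y$ finite; one then applies the explicit retraction formula of Lemma~\ref{Lem4.5}. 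There is no ``section at infinity'' in the argument---your step~3 conflates several standard presentations, and as written it is unclear how \ref{itm:(H)} (which is only over a field) would suffice without the concrete identity $\Theta\circ\partial\circ[t]\circ\tilde g^*=\id$ that Lemma~\ref{Lem4.5} supplies. Once Proposition~\ref{Prop6.4} is in hand, the colimit reduction is exactly as you describe.

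Two further points. First, you omit the finite-field reduction: the genericity in Lemma~\ref{Lem6.2} (and likewise any Gabber-type presentation) needs an infinite base field, and the paper handles this via Lemma~\ref{Lem6.3}, which shows that pullback along $X_{k(t)}\to X$ is injective on $A^*$. Second, your closing ``alternative'' is confused: in the paper the coniveau spectral sequence is used to prove homotopy invariance (Section~\ref{HomotopyInvarianceSection}), not the present acyclicity statement, and the reduction there is to property~\ref{itm:(H)} for the general cycle module $M$, not to Theorem~\ref{KMWhmtpyInv} for $\KMW$ specifically.
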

The proof is postponed till the end of the section; we will need the following lemmas.
\par 
Let $V$ be a vector space over $k$ and let $\AAA(V)$ be the associated affine space. For a linear subspace $W$ of $V$ let
\begin{center}

$\pi_W:\AAA(V) \to \AAA(V/W),$
\\ $\pi_W(v)=v+W$
\end{center}
be the projection.

\begin{Lem}\label{Lem6.2}

Let $X\subset \AAA(V)$ be an equidimensional closed subscheme of dimension $d$ and let $Y$ be a closed subscheme  with $\dim Y < d$. Moreover, let $S\subset Y$ be a finite subset such that $X$ is smooth in $S$. Then for a generic $(d-1)$-codimensional linear subspace $W$ of $V$ the following conditions hold.
\begin{itemize}
\item The restriction
$\pi_W|Y:Y\to \AAA(V/W)$
is finite.
\item The restriction $\pi_W|X:X\to \AAA(V/W)$ is locally around $S$ smooth.
\end{itemize}

\end{Lem}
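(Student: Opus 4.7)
The plan is to view both requirements as non-empty Zariski open conditions on the Grassmannian $G = \mathrm{Gr}(\dim V - d + 1, V)$ of codimension-$(d-1)$ linear subspaces $W \subset V$, and then take their (still non-empty, since $G$ is irreducible) intersection.

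For the finiteness of $\pi_W|_Y$, I would compactify: embed $\AAA(V)$ as the standard affine chart in $\PP(V \oplus k)$, with hyperplane at infinity $H_\infty = \PP(V)$, and let $\bar Y \subset \PP(V \oplus k)$ be the closure of $Y$. The projection $\pi_W$ extends to a rational map $\bar\pi_W : \PP(V \oplus k) \dashrightarrow \PP((V/W) \oplus k)$ whose indeterminacy locus is $\PP(W) \subset H_\infty$. If $\bar Y \cap \PP(W) = \emptyset$, then $\bar\pi_W|_{\bar Y}$ is a morphism from a projective scheme to a projective target, hence proper, hence $\pi_W|_Y$ is proper and quasi-finite — so finite. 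Now $\bar Y \cap H_\infty$ has dimension at most $\dim Y - 1 \leq d-2$, while $\PP(W) \subset H_\infty = \PP(V)$ has dimension $\dim V - d$ inside $\PP(V)$ of dimension $\dim V - 1$; since $(d-2) + (\dim V - d) = \dim V - 2 < \dim V - 1$, the standard incidence count in the Grassmannian shows that the locus of $W$ for which $\PP(W)$ meets the fixed subvariety $\bar Y \cap H_\infty$ is a proper closed subset of $G$. Thus finiteness of $\pi_W|_Y$ holds on a non-empty open $U_1 \subset G$.

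For the smoothness of $\pi_W|_X$ around $S$, fix $s \in S$. Since $X$ is smooth at $s$ of dimension $d$, the tangent space $T_s X$ is a $d$-dimensional linear subspace of $T_s \AAA(V) = V$. The morphism $\pi_W|_X$ is smooth at $s$ iff the restriction of the constant linear map $V \to V/W$ to $T_s X$ is surjective, i.e., $T_s X + W = V$. By the dimension formula together with $\dim W = \dim V - d + 1$, this amounts to $\dim(T_s X \cap W) \leq 1$. The Schubert-type condition $\dim(T_s X \cap W) \geq 2$ cuts out a proper closed subvariety of $G$ (a codimension-$(d-1)$ subspace generically meets a fixed $d$-dimensional subspace in a line), so we obtain a non-empty open $U_2^{(s)} \subset G$. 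Since $S$ is finite, $U_2 = \bigcap_{s \in S} U_2^{(s)}$ is open and non-empty.

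The desired $W$ is then any element of the non-empty open set $U_1 \cap U_2 \subset G$. The only real subtlety is the first step — arranging properness after compactification — where one must be careful that the center of projection $\PP(W)$ avoids the part of $\bar Y$ at infinity, and not merely $Y$ itself; this is the point at which the strict inequality $\dim Y < d$ is used.
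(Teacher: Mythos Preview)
Your argument is correct and is the standard projection/Noether-normalization argument. The paper itself gives no proof here and simply cites \cite[Proposition~6.2]{Rost96}, so there is nothing to compare; what you wrote is essentially what Rost does.

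One small point worth tightening: after showing $\bar\pi_W|_{\bar Y}$ is a morphism and hence $\pi_W|_Y$ is proper, you assert ``proper and quasi-finite'' without saying why the fibers are finite. The quickest way to close this is to note that $Y$, being closed in $\AAA(V)$, is affine, and a proper morphism between affine schemes is automatically finite. Alternatively, each fiber of $\bar\pi_W|_{\bar Y}$ is a closed subscheme of a linear $\PP^{\dim W}$ which avoids the hyperplane $\PP(W)$, hence is simultaneously projective and affine, hence finite.
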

\begin{proof}
See \cite[Proposition 6.2]{Rost96}.
\end{proof}

The existence of the generic space $W$ is not guaranteed over finite base fields. Note that if $X$ is semi-local, then $X\times_k \Spec(k(t))$ is also semi-local and essentially smooth over $k$. The following lemma shows that we can assume $k$ to be infinite.
\begin{Lem}
\label{Lem6.3}
Let $X$ be a scheme over $k$ and let $g:X_{k(t)}\to X$ be the (smooth) base change. Then
\begin{center}

$g^*:A^*(X,M,\VV_X)\to A^*(X_{k(t)},M,-\TT_{X_{k(t)}/X}+\VV_{X_{k(t)}})$
\end{center}
is injective.
\end{Lem}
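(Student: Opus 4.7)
My plan is to factor $g$ as the composition
$$X_{k(t)} \xrightarrow{\iota} X \times_k \AAA^1_k \xrightarrow{p} X,$$
where $p$ is the projection and $\iota$ is the inclusion of the generic fibre over $\AAA^1_k$. By the homotopy invariance theorem (Theorem \ref{HomotopyInvariance}), the pullback $p^*$ is an isomorphism after the canonical identification of twists, so it suffices to prove that $\iota^*$ is injective on the image of $p^*$.

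Next I would exploit continuity: since the cycle complex is built from data on finitely generated field extensions, we have a natural identification
$$A^*(X_{k(t)}, M, -\TT_{X_{k(t)}/X} + \VV_{X_{k(t)}}) = \colim_U A^*(X \times_k U, M, -\TT_{(X\times U)/X} + \VV_{X\times U}),$$
where $U$ ranges over nonempty open subsets of $\AAA^1_k$. Hence $g^*(\alpha) = 0$ means that $j_U^* p^*(\alpha) = 0$ for some nonempty open $U \subset \AAA^1_k$, where $j_U : X \times U \hookrightarrow X \times \AAA^1_k$. Setting $Z := \AAA^1_k \setminus U$ (a finite set of closed points) and $i : X \times Z \hookrightarrow X \times \AAA^1_k$ for the closed immersion, the localization exact sequence of \ref{LocalizationSequence} supplies $\beta$ with $p^*(\alpha) = i_*(\beta)$.

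To conclude $\alpha = 0$, I would pick a closed point $z_0 \in \AAA^1_k \setminus Z$ (which exists because $\AAA^1_k$ has infinitely many closed points over any field, as $k[t]$ has infinitely many irreducible monic polynomials). I would then apply the specialization map $s^{\pi}_{v_{z_0}}$ associated to the valuation of the function field of $X \times \AAA^1_k$ at $z_0$ (with uniformizer $\pi = t - z_0$ when $z_0$ is $k$-rational, or its analogue in general). On the generic fiber this specialization is a left inverse to the restriction along $k \hookrightarrow k(t)$, so it retracts $p^*$ back to the identity on $A^*(X, M, \VV_X)$. On the other hand, because the support $X \times Z$ of $i_*(\beta)$ is disjoint from the point $z_0$ at which we specialize, the rules \ref{itm:R3c} and \ref{itm:R3d} (together with \ref{itm:R3a}, \ref{itm:R3e}) force the specialization of $i_*(\beta)$ to vanish. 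Combining these, $\alpha$ equals the specialization of $p^*(\alpha) = i_*(\beta)$, which is zero.

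The main obstacle is to make this ``specialization retraction'' rigorous at the level of chain complexes without invoking the Gysin machinery developed only in Section \ref{GysinMorphisms}. Concretely, I need to (i) assemble the pointwise specialization maps into a genuine morphism of complexes (using the compatibility rules \ref{itm:R3a}--\ref{itm:R3e} to check commutation with the differential, carefully tracking the twists coming from $\NN_{v_{z_0}} \simeq \AAA^1_{\kappa(z_0)}$), and (ii) verify that this chain map descends to cohomology and satisfies the retraction identity $s \circ g^* = \mathrm{id}$ together with $s \circ i_* = 0$ when the support is disjoint from $z_0$. This follows the line of Rost's argument in \cite[\S6]{Rost96}, with the quadratic refinements forced by the extra virtual-bundle twists absorbed into canonical isomorphisms $\Theta$ as in the compatibilities established in Section \ref{Compatibilities}.
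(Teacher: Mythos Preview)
Your approach is correct in principle but significantly more convoluted than the paper's, and it invokes Homotopy Invariance (Theorem~\ref{HomotopyInvariance}) from a later section. There is no actual logical circularity, since the homotopy invariance proof does not rely on the acyclicity results of Section~\ref{Acyclicity}, but the forward reference is awkward and, as it turns out, entirely avoidable.

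The paper's proof is essentially just your final ``main obstacle'' step, carried out directly and without any preamble. For each open $U\subset\AAA^1_k$ containing $0$, set $X^U=X\times_k(U\setminus\{0\})$ and define the chain map
\[
s_U^* \;=\; \partial^U \circ [t]\;:\; C_{p+1}(X^U,M,-\TT_{X^U/X}+\VV_{X^U}) \longrightarrow C_p(X,M,\VV_X),
\]
where $\partial^U$ is the boundary map for the closed immersion $X\times\{0\}\hookrightarrow X\times U$ (one of the five basic maps of Section~\ref{FiveBasic}). By Lemma~\ref{Lem4.5}, i.e.\ rule~\ref{itm:R3d}, one has $s_U^*\circ g_U^*=\id$ on the nose at the chain level, so passing to the colimit over $U$ gives a retraction $s^*$ of $g^*$. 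That is the whole argument.

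Your detour through $p^*$ and the localization sequence buys nothing: once you have a retraction $s$ with $s\circ g^*=\id$, injectivity is immediate. There is no need to know that $p^*$ is an isomorphism, no need to write $p^*(\alpha)=i_*(\beta)$, and no need to argue vanishing via support-disjointness. In fact, the chain-level specialization you describe in your obstacle paragraph---assembling $s^\pi_{v_{z_0}}$ into a morphism of complexes using the compatibilities of Section~\ref{Compatibilities}---is precisely the map $s_U^*$ above (with $z_0=0$), and the verification that $[t]$ and $\partial$ are chain maps is already contained in Proposition~\ref{Prop4.6}. So your proof is valid but front-loads machinery (homotopy invariance, the localization sequence) that the direct construction bypasses entirely.
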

\begin{proof} (see also \ref{GysinLem5.10})
Let $U\subset \AAA^1_k$ be an open set containing $0$ and write $X^U=X\times_k (U\setminus\{0\})$. Define $s_U^*$ as the composition
\begin{center}

$\xymatrix{
C_{p+1}(X^U,M,-\TT_{X^U/X}+\VV_{X^U}) \ar[d]_{s_U^*} \ar[r]^-{[t]} & C_{p+1}(X^U,M,\VV_{X^U}) \ar[d]^{\partial^U} \\
 C_p(X,M,\VV_{X}) & 
C_p(X\times_k \{0\},M,\VV_{X\times_k \{0\}}). \ar[l]_\simeq
}$

\end{center}
This defines a morphism on the homology groups, also denoted by $s^*_U$. Consider the map ${g^*_U:A_p(X,M,\VV_{X})\to A_{p+1}(X^U,M,-\TT_{X^U/X}+\VV_{X^U})}$ (induced by the canonical projection ${g_U:X^U\to X}$) so that we have $g^*=\colim_U g^*_U$ where the limit is taken over the open sets $U\subset \AAA^1_k$ containing $0$. Define likewise $s^*=\colim_U s_U^*$. Since $s_U^*\circ g_U^*=\id$ for all $U$ (by \ref{itm:R3d}, see also the proof of Lemma \ref{Lem4.5}), we see that $s^*$ is a section of $g^*$.
\end{proof}

\begin{Pro}

\label{Prop6.4}
Let $X$ be a smooth scheme over a field and let $Y\subset X$ be a closed subscheme of codimension $\geq 1$. Then for any finite subset $S\subset Y$ there is an open neighborhood $X'$ of $S$ in $X$ such that the map
\begin{center}

$i_*:A_*(Y\cap X',M, \VV_{Y\cap X'}) \to A_*(X',M,\VV_{X'})$
\end{center}
is the trivial map. Here $i:Y\cap X'\to X'$ is the inclusion.
\end{Pro}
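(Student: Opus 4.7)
The plan is to adapt Rost's proof of \cite[Proposition 6.4]{Rost96} to the Milnor-Witt setting, carefully tracking virtual bundle twists. The key inputs are Lemmas \ref{Lem6.2}, \ref{Lem6.3}, and \ref{Lem4.5}, together with the localization sequence of \ref{LocalizationSequence} and the compatibilities of Section \ref{Compatibilities}.

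First, by Lemma \ref{Lem6.3} (whose chain-level splitting applies equally in the homological grading), I may assume that $k$ is infinite, so that Lemma \ref{Lem6.2} is applicable. After shrinking $X$ to an affine open neighborhood of $S$, I embed $X \hookrightarrow \AAA(V)$ for some finite-dimensional $k$-vector space $V$ and apply Lemma \ref{Lem6.2} to the pair $(X,Y)$ with $S$. This produces a linear subspace $W \subset V$ of codimension $d-1$, with $d = \dim X$, such that, after replacing $X$ by a smaller open neighborhood $X'$ of $S$, the projection $p := \pi_W|_{X'} : X' \to B := \AAA(V/W)$ is smooth of relative dimension one and $p|_{Y \cap X'}$ is finite. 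I then form $\tilde X := X' \times_B (Y \cap X')$, with projections $\pi_1 : \tilde X \to X'$ (finite, by base change) and $\pi_2 : \tilde X \to Y \cap X'$ (smooth of relative dimension one, by base change from $p$), and the diagonal section $\Delta : Y \cap X' \hookrightarrow \tilde X$ of $\pi_2$. Since the inclusion factors as $i = \pi_1 \circ \Delta$, Proposition \ref{Prop4.1} gives $i_* = (\pi_1)_* \circ \Delta_*$.

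The main geometric step is a Gabber-Panin separation. After further shrinking $X'$ around $S$, one arranges that $\pi_1^{-1}(Y \cap X') = (Y \cap X') \times_B (Y \cap X') \subset \tilde X$ decomposes as a disjoint union of closed subschemes $\Delta(Y \cap X') \sqcup Y''$, with $\pi_1(Y'')$ closed in $X'$ and disjoint from $S$. Removing $\pi_1(Y'')$ from $X'$ and its preimage from $\tilde X$ yields an open $\tilde X_0$ on which $\pi_1|_{\tilde X_0}$ remains finite, $\Delta(Y \cap X') \subset \tilde X_0$ is closed, and (after a last shrinking step) is cut out by a global parameter $t \in \mathcal{O}(\tilde X_0)$. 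Applying Lemma \ref{Lem4.5} to $\pi_2|_{\tilde X_0}$ with the section $\Delta$ and the parameter $t$ gives $\Theta \circ \partial \circ [t] \circ (\pi_2|_{\tilde X_0 \setminus \Delta})^* = \id$ on the appropriate twisted component.

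To conclude, I compose this identity on the left with $\Delta_*$. The naturality of the twist isomorphism $\Theta$ from data \ref{itm:D1} lets me swap $\Theta$ past $\Delta_*$, and then the localization sequence identity $\Delta_* \circ \partial = 0$ forces $\Delta_* = 0$ as a map into $A_*(\tilde X_0, M, \VV_{\tilde X_0})$. Applying the proper pushforward $(\pi_1|_{\tilde X_0})_*$ gives $i_* = (\pi_1|_{\tilde X_0})_* \circ \Delta_* = 0$, which is the desired conclusion on the appropriate open neighborhood of $S$. The main obstacle will be the separation step: simultaneously securing the clean disjoint decomposition of $\pi_1^{-1}(Y \cap X')$, the existence of a global uniformizer cutting out $\Delta(Y \cap X')$ in $\tilde X_0$, and the properness of $\pi_1|_{\tilde X_0}$, while bookkeeping all the canonical twist isomorphisms $\Theta$ that arise through Lemma \ref{Lem4.5}, Proposition \ref{Prop4.1}, and the localization sequence \ref{LocalizationSequence}.
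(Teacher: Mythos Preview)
Your proposal is correct and follows essentially the same approach as the paper, which simply writes ``Same as in \cite[Proposition 6.4]{Rost96}. This uses Lemma \ref{Lem4.5}.'' Your sketch spells out the Gabber--Panin/Rost argument (reduction to infinite $k$ via Lemma \ref{Lem6.3}, the generic projection from Lemma \ref{Lem6.2}, the fibre product with diagonal section, and the vanishing via Lemma \ref{Lem4.5} combined with the localization identity $\Delta_*\circ\partial=0$) in more detail than the paper itself provides, and your handling of the twist isomorphisms $\Theta$ is in line with the paper's standing convention that such canonical isomorphisms are carried along silently.
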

\begin{proof} Same as in \cite[Proposition 6.4]{Rost96}. This uses Lemma \ref{Lem4.5}.
\end{proof}

\begin{proof}[Proof of Theorem \ref{Thm6.1}] We may assume that $X$ is connected. Put $d=\dim X$. Consider pairs $(U,S)$ where $U$ is a smooth $d$-dimensional scheme of finite type over $k$ and $S\subset U$ is a finite subset such that $X$ is the localization of $U$ in $S$. Then
\begin{center}

$C^*(X,M,\VV_X)=\colim_{(U,S)} C^p(U,M,\VV_U)$.
\end{center}
Moreover,
\begin{center}
$C^p(U,M,\VV_U)=C_{d-p}(U,M,\VV_U)=\colim_Y C_{d-p}(Y,M,\VV_Y)$
\end{center}
where $Y$ runs over the closed $p$-codimensional subsets of $U$. Hence
\begin{center}

$A^p(X,M,\VV_X)=\colim_{(U,S)} A^p(U,M,\VV_U)=\colim_{(U,S)} \colim_Y A_{d-p}(Y,M,\VV_Y)$.
\end{center}
Finally, Proposition \ref{Prop6.4} tells that the map $A_{d-p}(Y,M,\VV_Y)\to A^p(U,M,\VV_U)\to A^p(U',M,\VV_{U'})$ is trivial for $U'\subset U$ small enough.
\end{proof}

\begin{Cor}

\label{Cor6.5}
Let $X$ be a smooth scheme over $k$, let $\VV_X$ be a virtual bundle over $X$ and let $\mathcal{M}_X$ be the Zariski sheaf on $X$ given by
\begin{center}

$U\mapsto A^0(U,M,\VV_U) \subset M(\xi_X,\VV_X)$
\end{center}
for open subsets $U$ of $X$. There are natural isomorphisms
\begin{center}

$A^p(X,M,\VV_X)\to H^p_{Zar}(X,\mathcal{M}_X)$.
\end{center}
\end{Cor}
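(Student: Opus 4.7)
The plan is to sheafify the cycle complex and use Theorem \ref{Thm6.1} to show that the resulting complex of sheaves is a flasque resolution of $\mathcal{M}_X$. Define a presheaf $\underline{C}^p(-,M,\VV_X)$ on $X$ by
\[
U \mapsto C^p(U,M,\VV_U) = \bigoplus_{x \in U^{(p)}} M(x, \VV_X),
\]
with restriction to an open $V \subset U$ given by projection onto the summands indexed by $V^{(p)} \subset U^{(p)}$. The differential $d$ of Section \ref{MWComplexe} is compatible with restrictions (by Proposition \ref{Prop4.6}.2 applied to open immersions) and makes $\underline{C}^*(-,M,\VV_X)$ a complex of presheaves. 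First I would check that each $\underline{C}^p$ is a flasque sheaf: it is a sheaf because a family of compatible sections over a cover is determined by a single coherent assignment of an element of $M(x,\VV_X)$ to each codimension-$p$ point, and it is flasque because the restriction map is visibly surjective (extend any section by zero on points outside the subset).

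Next, I would identify the cohomology of the complex at the stalk level. For $x \in X$, the stalk $\underline{C}^*(-,M,\VV_X)_x$ is the colimit of $C^*(U,M,\VV_U)$ over open neighborhoods of $x$, which by the argument used in the proof of Theorem \ref{Thm6.1} equals $C^*(\Spec \mathcal{O}_{X,x}, M, \VV_{\mathcal{O}_{X,x}})$. Since $\Spec \mathcal{O}_{X,x}$ is essentially smooth and local (hence semi-local), Theorem \ref{Thm6.1} tells us that the cohomology of this complex vanishes in positive degree. Therefore the complex of sheaves $\underline{C}^*(-,M,\VV_X)$ is exact in positive degrees, and its kernel in degree zero is exactly the sheaf $\mathcal{M}_X$ (by the very definition $A^0(U,M,\VV_U) = \ker\bigl(C^0(U,M,\VV_U) \to C^1(U,M,\VV_U)\bigr)$, which also sheafifies correctly since formation of kernels commutes with taking sections).

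Thus $\underline{C}^*(-,M,\VV_X)$ is a flasque resolution of $\mathcal{M}_X$. The Zariski cohomology $H^p_{Zar}(X, \mathcal{M}_X)$ can therefore be computed as the cohomology of the complex of global sections of this resolution; but the global sections are precisely $C^*(X,M,\VV_X)$, whose cohomology by definition is $A^p(X,M,\VV_X)$. Naturality in $X$ follows from the fact that all the constructions (sheafification, restriction, the differential) are natural for the essentially smooth pullbacks considered.

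The main obstacle, although relatively mild, is the careful verification that stalks of the sheafified complex compute $C^*(\Spec \mathcal{O}_{X,x}, M, \VV)$, since the cycle complex is not quite the naive colimit: one needs to use that any point of $\Spec \mathcal{O}_{X,x}$ comes from a unique point of a small enough open neighborhood, and that Milnor-Witt cycle modules commute with filtered colimits of fields in a compatible way. Once this is granted, the rest is a standard application of the flasque resolution argument.
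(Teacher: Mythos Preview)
Your proof is correct and is precisely the standard flasque-resolution argument that the paper has in mind: the paper's own proof consists of the single sentence ``Same as \cite[Corollary 6.5]{Rost96}'', and Rost's argument there is exactly the one you have written out. There is nothing to add.
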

\begin{proof}
Same as \cite[Corollary 6.5]{Rost96}.
\end{proof}

\section{Homotopy Invariance} \label{HomotopyInvarianceSection}
Following \cite[§2]{Deg14}, we define a coniveau spectral sequence that will help us reduce the homotopy invariance property to the known case \ref{itm:(H)}.

\begin{Pro} \label{PropertyH}
Let $F$ be a field, $X=\Spec F$ its spectrum, $\AAA^n_X$ the affine vector bundle of rank $n$ over $X$, $\pi:\AAA^n_X\to X$ the canonical projection and $\VV_X$ a virtual vector bundle over $X$. Then, for every $q\in \ZZ$, the canonical morphism
\begin{center}
$\pi^*:A^q(X,M,\VV_X)\to A^q(\AAA^n_X,M,-\TT_{\AAA^n_X/X}+\VV_{\AAA^n_X})$

\end{center}
is an isomorphism.
\end{Pro}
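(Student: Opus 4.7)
The plan is to proceed by induction on $n$, reducing everything to Property \ref{itm:(H)} from Proposition \ref{Prop2.2}.

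For the base case $n=1$, the complex $C^*(\AAA^1_X, M, -\TT_{\AAA^1_X/X} + \VV_{\AAA^1_X})$ has only two nontrivial terms. Trivializing the relative tangent bundle by the canonical coordinate, $\TT_{\AAA^1_X/X} \simeq \AAA^1_{\AAA^1_X}$, the degree $0$ term identifies with $M(F(t), \Om_{F(t)/k} - \AAA^1_{F(t)} + \VV_{F(t)})$ and the degree $1$ term with $\bigoplus_{x \in (\AAA^1_X)^{(1)}} M(\kappa(x), \Om_{\kappa(x)/k} - \AAA^1_{\kappa(x)} + \VV_{\kappa(x)})$. Applying the short exact sequence of \ref{itm:(H)} with $\VV_F$ replaced by $-\AAA^1_F + \VV_F$, one obtains
\[
0 \to M(F, \Om_{F/k} + \VV_F) \xrightarrow{\pi^*} C^0(\AAA^1_X, M, -\TT + \VV) \xrightarrow{d} C^1(\AAA^1_X, M, -\TT + \VV) \to 0,
\]
which immediately yields $\pi^*\colon A^0(X, M, \VV_X) \xrightarrow{\sim} A^0(\AAA^1_X, M, -\TT + \VV)$ and $A^q(\AAA^1_X, M, -\TT + \VV) = 0$ for $q \geq 1$.

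For the inductive step $n \geq 2$, I would factor $\pi$ as $\AAA^n_X \xrightarrow{p} \AAA^{n-1}_X \xrightarrow{\pi'} X$, where $p$ forgets the last coordinate. The inductive hypothesis takes care of $(\pi')^*$, so it suffices to show that $p^*$ is an isomorphism on cohomology, even though $\AAA^{n-1}_X$ is not a field. For this, I would filter $C^*(\AAA^n_X, M, -\TT + \VV)$ by \emph{horizontal codimension} (a point $x$ lies in level $j$ iff $p(x)$ has codimension $\geq j$ in $\AAA^{n-1}_X$). This filtration is compatible with the differential, and the associated graded pieces decompose as direct sums, indexed by $y \in \AAA^{n-1}_X$, of fiberwise $\AAA^1$-cycle complexes $C^*(\AAA^1_{\kappa(y)}, M, -\TT + \VV|_y)$. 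By the base case applied over each residue field $\kappa(y)$, these complexes have cohomology concentrated in degree $0$, equal to $M(\kappa(y), \Om_{\kappa(y)/k} + \VV|_y)$. Tracking the splitting $\TT_{\AAA^n_X/X} \simeq \TT_{\AAA^n_X/\AAA^{n-1}_X} + p^*\TT_{\AAA^{n-1}_X/X}$, the $d_1$-differential is identified with the differential of $C^*(\AAA^{n-1}_X, M, -\TT_{\AAA^{n-1}_X/X} + \VV)$, and the spectral sequence collapses at $E_2$ to give $p^*\colon A^*(\AAA^{n-1}_X, M, -\TT' + \VV') \xrightarrow{\sim} A^*(\AAA^n_X, M, -\TT + \VV)$.

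The main obstacle lies in the inductive step: verifying that the horizontal codimension filtration is preserved by the differential (separating ``vertical'' residues internal to a fiber of $p$ from ``horizontal'' ones between fibers, using the axioms \ref{itm:R3a}--\ref{itm:R3e}), identifying the $E_1$-page precisely with the fiberwise $\AAA^1$-cycle complexes under the correct twist, and keeping careful track of virtual vector bundle twists throughout. Once these bookkeeping issues are handled, the base case is essentially immediate from \ref{itm:(H)}, and the full statement follows by composing the isomorphism from the spectral sequence with the inductive hypothesis for $(\pi')^*$.
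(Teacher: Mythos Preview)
Your proposal is correct and follows the same overall strategy as the paper: induction on $n$, with the base case $n=1$ read off directly from property \ref{itm:(H)}. The paper's own proof is extremely terse on the inductive step (it simply writes ``By induction, we can reduce to the case $n=1$'' and cites Schmid and Fasel), whereas you spell out that step via a spectral sequence coming from the horizontal-codimension filtration. This is a legitimate way to fill the gap, and in fact it is exactly the coniveau spectral sequence argument that the paper later deploys in Theorems \ref{SpectralSequenceHmtpInvariance}--\ref{HomotopyInvariance} for the general homotopy invariance theorem: your $E_1$-page is $\bigoplus_{y \in (\AAA^{n-1}_X)^{(p)}} A^q(\AAA^1_{\kappa(y)}, M, \ldots)$, the base case forces $E_1^{p,q}=0$ for $q>0$, and the resulting degeneration at $E_2$ identifies the abutment with $A^*(\AAA^{n-1}_X, M, -\TT_{\AAA^{n-1}_X/X}+\VV)$. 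So you are essentially anticipating the machinery of the next theorem rather than invoking an external reference; both routes arrive at the same place.
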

\begin{proof} Same as \cite[Satz 6.1.1]{Schmid98} or \cite[Théorème 11.2.4]{Fasel13}. If $q \neq 0$, then the map is trivial. We assume $q=0$. By induction, we can reduce to the case $n=1$. In this case, we have $\AAA^1_X=\Spec F[t]$ for some parameter $t$. By definition, $A^0(X,M,\VV_X)=M(F,\Om_{F/k}+\VV_F)$ and $A^0(\AAA^n_X,M,-\TT_{\AAA^1_X/X}+\VV_{\AAA^n_X})$ is the cokernel of the map ${d:M(F(t),\Om_{F(t)/k} -\AAA^1_{F(t)}+\VV_{F(t)})\to \bigoplus_{x\in {(\AAA_F^1)}^{(1)}} M(\kappa(x),\Om_{\kappa(x)/k} -\AAA^1_{\kappa(x)}+\VV_{\kappa(x)})}$ defined in \ref{DefDifferential}. Thus, the result follows from Proposition \ref{Prop2.2}, \ref{itm:(H)}.
\end{proof}

\par Let $X$ be a scheme with a virtual bundle $\VV_X$, let $V$ be a scheme and let $\pi:V\to X$ be an essentially smooth morphism.

A flag on $X$ is a decreasing sequence $(Z^p)_{p\in \ZZ}$ of closed subschemes of $X$ such that $\codim_X(Z^p) \geq p$ where, by convention, $Z^p=X$ if $p< 0$ and $Z^p=\varnothing$ if $p> \dim X$. The set of flags of $X$ is denoted by $\Flag(X)$ and it is ordered by the inclusion termwhise so that it becomes a filtrant set.
\par Let $\mathfrak{Z}=(Z^p)_{p\in \ZZ}$ be a flag of $X$ and define $\pi^*\mathfrak{Z}=(\pi^*Z^p)_{p\in \ZZ}$ a flag over $V$ by ${\pi^*Z^p=V\times_X Z^p}$. For $p,q\in \ZZ$, define
\begin{center}

$D_{\mathfrak{Z}}^{p,q}=A^{p+q-1}(V- \pi^*Z^p, M, \VV_{(X -  \pi^*Z^p)})$,
\\ $E_{\mathfrak{Z}}^{p,q}=A^{q}(\pi^*Z^p- \pi^*Z^{p+1},M,\VV_{(\pi^*Z^p-  \pi^*Z^{p+1})})$.
\end{center}
We have a long exact sequence
\begin{center}

$\xymatrix{
 \dots \ar[r] & D_{\mathfrak{Z}}^{p+1,q-1} \ar[r]^-{j_p^*} & D_{\mathfrak{Z}}^{p,q} \ar[r]^{\partial_p} & E_{\mathfrak{Z}}^{p,q} \ar[r]^{i_{p,*}} & D_{\mathfrak{Z}}^{p+1,q} \ar[r] & \dots
 }$
\end{center}

so that $(D^{p,q}_\mathfrak{Z},E^{p,q}_\mathfrak{Z})_{p,q\in \ZZ}$ is an exact couple where $j_p^*$ and $i_{p,*}$ are induced by the canonical immersions. By the general theory (see \cite[Chapter 3]{McCleary00}), this defines a spectral sequence that converges to $A^{p+q}(V,M,\VV_V)$ because the $E^{p,q}_1$-term is bounded (since the dimension of $V$ is finite).
\par For $p,q\in \ZZ$, denote by
\begin{center}
$D^{p,q}_{1,\pi}=\colim_{\mathfrak{Z}\in \Flag(X)} D^{p,q}_{\mathfrak{Z}}$,\\
$E^{p,q}_{1,\pi}=\displaystyle \operatorname{colim}_{\mathfrak{Z}\in \Flag(X)} E_{\mathfrak{Z}}^{p,q}$
\end{center}
where the colimit is taken over the flags $\mathfrak{Z}$ of $X$ (see Proposition \ref{Prop4.4} for functoriality). Since filtered direct limits are exact in the derived category of abelian groups, the previous spectral sequences give the following theorem.
\begin{The} \label{SpectralSequenceHmtpInvariance}

We have the convergent spectral sequence
\begin{center}

$E^{p,q}_{1,\pi} \Rightarrow A^{p+q}(V,M,\VV_V)$.
\end{center}

\end{The}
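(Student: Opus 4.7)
The plan is to apply the standard machinery for spectral sequences arising from exact couples and then to pass to a filtered colimit. The first step is to verify that the four-term sequence $D^{p+1,q-1}_\mathfrak{Z} \xrightarrow{j_p^*} D^{p,q}_\mathfrak{Z} \xrightarrow{\partial_p} E^{p,q}_\mathfrak{Z} \xrightarrow{i_{p,*}} D^{p+1,q}_\mathfrak{Z}$ defining the exact couple is genuinely exact for each flag $\mathfrak{Z}$. This is nothing but the localization long exact sequence (\ref{LocalizationSequence}) applied to the boundary triple $(\pi^*Z^p - \pi^*Z^{p+1},\, V - \pi^*Z^{p+1},\, V - \pi^*Z^p)$: since $\pi$ is essentially smooth and $\codim_X Z^p \geq p$, the stratum $\pi^*Z^p - \pi^*Z^{p+1}$ has codimension $p$ in $V - \pi^*Z^{p+1}$, and this codimension shift converts the homological indexing of \ref{LocalizationSequence} into the cohomological indexing of the statement, reproducing the four terms above. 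The general theory of exact couples (see \cite[Chapter 3]{McCleary00}) then produces a spectral sequence $(E^{p,q}_{r,\mathfrak{Z}}, d_r)_{r \geq 1}$ with differentials of bidegree $(r, 1-r)$.

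Next, I would establish convergence of the spectral sequence for a fixed flag. Since $V$ is essentially of finite type over $k$, it has finite Krull dimension $d$, and the strata $\pi^*Z^p - \pi^*Z^{p+1}$ are empty outside a bounded range of $p$ (in particular, for $p > \dim X$). Moreover, $A^q$ of a scheme of dimension $\leq d-p$ vanishes outside $0 \leq q \leq d-p$. Hence on each diagonal $p+q = n$ only finitely many $E^{p,q}_{1,\mathfrak{Z}}$ are non-zero, so the spectral sequence converges strongly. The abutment in total degree $n$ is extracted from the stabilization of $D^{p, n-p+1}_\mathfrak{Z} = A^n(V - \pi^*Z^p, M, \VV_{V-\pi^*Z^p})$ as $p \to +\infty$: for $p > \dim X$ one has $V - \pi^*Z^p = V$, so this stabilizes to $A^n(V, M, \VV_V)$.

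Finally, I would pass to the filtered colimit over $\Flag(X)$. Functoriality of the exact couple in $\mathfrak{Z}$ is provided by Proposition \ref{Prop4.4} (compatibility of the basic maps with boundary maps when two flags are comparable). Since filtered colimits of exact sequences of abelian groups are exact, the colimit couple $(D^{p,q}_{1,\pi}, E^{p,q}_{1,\pi})$ is again exact. Because the bounds on the vanishing region of the $E^{p,q}$-terms are uniform in $\mathfrak{Z}$, the colimit spectral sequence remains strongly convergent with the same abutment $A^{p+q}(V, M, \VV_V)$.

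The main delicate point is the bookkeeping of indices and virtual twists: the $E$-term sits in cohomological degree $q$ on a stratum of codimension $p$, the $D$-term in cohomological degree $p+q-1$ on the open complement, while the abutment is in cohomological degree $p+q$ on $V$. Tracing through the canonical isomorphisms of virtual bundles induced by the normal bundles of successive strata (implicit in the boundary maps $\partial_p$), and checking that they are preserved by the transition maps between flags, is the step that demands the most care.
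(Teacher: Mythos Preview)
Your proposal is correct and follows essentially the same approach as the paper: the paper's argument is contained in the paragraph immediately preceding the theorem statement, which asserts the exact couple structure coming from the localization sequence \ref{LocalizationSequence}, convergence from boundedness of the $E_1$-terms (finite dimension of $V$), and passage to the filtered colimit by exactness of filtered direct limits of abelian groups. You have simply supplied more detail on each of these three points, including the explicit identification of the abutment and the functoriality in flags via Proposition \ref{Prop4.4}; one small caution is that the strata $\pi^*Z^p-\pi^*Z^{p+1}$ need only have codimension $\geq p$ for a fixed flag, so the index matching you describe is really only clean after passing to the colimit (which both you and the paper ultimately do).
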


We need to compute this spectral sequence. This is done in the following theorem.
\begin{The} \label{SpectralSequenceComputation}

For $p,q\in \ZZ$, we have a canonical isomorphism
\begin{center}

$E^{p,q}_{1,\pi}\simeq \displaystyle \bigoplus_{x\in X^{(p)}}A^q(V_x,M,\VV_{V_x})$.
\end{center}
\end{The}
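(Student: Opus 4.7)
The plan is to reduce to a cofinal subsystem of ``nice'' flags whose locally closed strata decompose geometrically, and then to invoke a continuity property of the cycle complex along filtered limits of open immersions.

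First, I would verify that the flags $\mathfrak{Z} = (Z^p)_{p\in \ZZ}$ satisfying (i) each $Z^p$ is the closure of a finite set $S_p \subset X^{(p)}$ and (ii) $Z^{p+1}$ contains every point of codimension $>p$ in $Z^p$ form a cofinal subsystem of $\Flag(X)$. This is checked by induction on codimension, refining an arbitrary flag by replacing $Z^p$ with the closure of its codimension-$p$ generic points while adjoining any remaining (higher codimension) generic points to $Z^{p+1}$; the noetherian and finite dimensional hypotheses on $X$ guarantee termination.

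For such a nice flag, $Z^p \setminus Z^{p+1}$ is the disjoint union of open subschemes $U_x \subset \overline{\{x\}}$ (each containing $x$) as $x$ ranges over $S_p$. Pulling back by $\pi$ and using that $C^*$ is additive on disjoint unions gives
\begin{equation*}
E^{p,q}_{\mathfrak{Z}} \;=\; \bigoplus_{x \in S_p} A^q(V \times_X U_x,\, M,\, \VV_{V \times_X U_x}).
\end{equation*}
Passing to the colimit over this cofinal subsystem, and using that a filtered colimit of direct sums indexed by growing finite subsets of $X^{(p)}$ is itself a direct sum over $X^{(p)}$, yields
\begin{equation*}
E^{p,q}_{1,\pi} \;\simeq\; \bigoplus_{x \in X^{(p)}} \colim_{x \in U_x \subset \overline{\{x\}}} A^q(V \times_X U_x,\, M,\, \VV).
\end{equation*}

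To conclude, for each fixed $x$, the fiber $V_x = V \times_X \Spec \kappa(x)$ is the inverse limit of the open subschemes $V \times_X U_x$ of $V \times_X \overline{\{x\}}$, since $\kappa(x) = \colim_{U_x} \OO_{\overline{\{x\}}}(U_x)$. Because the complex $C^*(-,M,\VV)$ is by definition a direct sum over points together with their residue modules, and these behave well under filtered inverse limits of schemes with affine essentially étale transition maps, one obtains the identity $C^*(V_x,M,\VV_{V_x}) = \colim_{U_x} C^*(V \times_X U_x, M, \VV)$; exactness of filtered colimits then gives the analogous identity on $A^q$, producing the desired isomorphism.

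The main obstacle is the continuity step for $A^q$: one must track how points, residue fields, and their twists pass through the filtered limit $V \times_X U_x \rightsquigarrow V_x$, and confirm exactness. Once continuity is in hand, the cofinality argument is purely combinatorial and the decomposition on a nice flag is geometric. A secondary point is verifying that the isomorphism is canonical, i.e.\ compatible with the transition maps $j_p^*$ and $i_{p,*}$ of the exact couple defining the spectral sequence, which one checks by inspecting the definitions of these maps on the nice subsystem.
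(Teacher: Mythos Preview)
Your strategy is the paper's: pass to a cofinal subsystem of flags for which the stratum $Z^p\setminus Z^{p+1}$ decomposes as a disjoint union indexed by a finite subset of $X^{(p)}$, then use continuity of the cycle complex along the filtered system of opens shrinking to $\Spec\kappa(x)$. The paper phrases the cofinal subsystem as pairs $(Z,Z')$ with $Z$ reduced of codimension $p$ and $Z'$ containing the singular locus of $Z$, but this is the same idea.

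One point to fix: your condition (ii) is vacuously impossible. A closed subscheme $Z^{p+1}$ of codimension $\geq p+1$ cannot contain \emph{every} point of $Z^p$ of codimension $>p$, since those points are dense in $Z^p$ (they include all closed points), so their closure is all of $Z^p$. What you actually need, and what suffices for the disjoint-union decomposition you then use, is that $Z^{p+1}$ contain the pairwise intersections $\overline{\{x\}}\cap\overline{\{x'\}}$ for distinct $x,x'\in S_p$ (and, following the paper, the singular locus of $Z^p$). This weaker condition is easily achieved and the resulting subsystem is cofinal. With that correction, the remainder of your argument goes through and coincides with the paper's proof.
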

\begin{proof} (see \cite[§4]{Deg14} for a similar result in the oriented case) Denote by $\mathcal{I}_p$ the set of pairs $(Z,Z')$ where $Z$ is a reduced closed subscheme of $X$ of codimension $p$ and $Z'\subset Z$ is a closed subset containing the singular locus of $Z$. Notice that any such pair $(Z,Z')$ can be (functorially) extented into a flag of $X$. Moreover, for any $x$ in $X$, consider $\overline{ \{x\}}$ the reduced closure of $x$ in $X$ and $\mathfrak{F}(x)$ be the set of closed subschemes $Z'$ of $\overline{ \{x\}}$ containing its singular locus. The following equalities are all canonical isomorphisms:

\begin{tabular}{lll}

$E^{p,q}_{1,\pi}$ &$ \simeq$ & $\colim_{{\mathfrak{Z}}\in \Flag(X)} A^q(V\times_X(Z^p-Z^{p+1}),M,\VV_{(V\times_X(Z^p-Z^{p+1}))})$ \\
    &    $\simeq$  & $\colim_{(Z,Z')\in \mathcal{I}_p} A^q(V\times_X(Z-Z'),M,\VV_{(V\times_X(Z-Z'))})$ \\
    &   $\simeq$   & $\displaystyle \bigoplus_{x\in X^{(p)}} \colim_{Z'\in \mathfrak{F}(x)} A^q(V\times_X(\overline{ \{x\} }-Z'),M,\VV_{V\times_X(\overline{ \{x\} }-Z')})$ \\
    &  $\simeq$    & $\displaystyle \bigoplus_{x\in X^{(p)}} A^q(V_x,M,\VV_{V_x}).$
\end{tabular}

\end{proof}

\begin{The}[Homotopy Invariance] \label{HomotopyInvariance}
Let $X$ be a scheme, $V$ a vector bundle of rank $n$ over $X$, $\pi:V\to X$ the canonical projection and $\VV_X$ a virtual vector bundle over $X$. Then, for every $q\in \ZZ$, the canonical morphism
\begin{center}
$\pi^*:A^q(X,M,\VV_X)\to A^q(V,M,-\TT_{V/X}+\VV_V)$

\end{center}
is an isomorphism.
\end{The}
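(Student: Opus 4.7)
My plan is to follow the strategy outlined in the introduction: use the coniveau spectral sequence from Theorem \ref{SpectralSequenceHmtpInvariance} together with its identification in Theorem \ref{SpectralSequenceComputation} to reduce the statement to the already-established case \ref{itm:(H)} (namely Proposition \ref{PropertyH}) fiberwise.

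First I would apply the spectral sequence machinery both to $\id_X : X \to X$ and to $\pi : V \to X$. For the identity, Theorem \ref{SpectralSequenceComputation} gives $E_{1,\id}^{p,q} \simeq \bigoplus_{x \in X^{(p)}} A^q(\Spec \kappa(x), M, \VV_x)$, which vanishes for $q \neq 0$ and reproduces $C^p(X,M,\VV_X)$ for $q=0$; this spectral sequence degenerates and converges to $A^{p+q}(X,M,\VV_X)$. For $\pi$, we get $E_{1,\pi}^{p,q} \simeq \bigoplus_{x \in X^{(p)}} A^q(V_x, M, -\TT_{V_x/\kappa(x)}+\VV_{V_x})$ converging to $A^{p+q}(V,M,-\TT_{V/X}+\VV_V)$.

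Next, I would verify that $\pi^*$ induces a morphism of exact couples at the flag level: $\pi$ sends every flag $\mathfrak{Z}$ of $X$ to the flag $\pi^*\mathfrak{Z}$ of $V$, and the pullback maps on the open and locally closed strata are compatible with the localization long exact sequence by Proposition \ref{Prop4.1} and Proposition \ref{Prop4.4}. Passing to the filtered colimit over $\Flag(X)$ preserves this structure, giving a morphism of convergent spectral sequences. On the $E_1$-page, by functoriality of the identification in Theorem \ref{SpectralSequenceComputation}, this morphism decomposes as a direct sum of fiber pullbacks
\[
\pi_x^* : A^q(\Spec \kappa(x), M, \VV_x) \to A^q(V_x, M, -\TT_{V_x/\kappa(x)}+\VV_{V_x}),
\]
where $V_x = V \times_X \Spec \kappa(x) \simeq \AAA^n_{\kappa(x)}$ since $V \to X$ is a vector bundle of rank $n$. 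By Proposition \ref{PropertyH}, each $\pi_x^*$ is an isomorphism, so the induced morphism on $E_1$-pages is an isomorphism. Since both spectral sequences are bounded at $E_1$ (because the schemes are finite-dimensional) and converge, I conclude by the standard spectral sequence comparison theorem that $\pi^*$ is an isomorphism on the abutments.

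The main obstacle is bookkeeping the twists by virtual bundles: I must check that the canonical isomorphism $\TT_{V_x/\kappa(x)} \simeq \TT_{V/X} \times_V V_x$ identifies the summand appearing in $E_{1,\pi}^{p,q}$ with the target of $\pi_x^*$, and that the naturality of all intermediate canonical isomorphisms $\Theta$ used to define the pullbacks (as in \ref{pullbackBasicMap}) is preserved along the colimit over flags. A secondary subtlety is that $V$ is only Zariski-locally (not globally) trivial; however the spectral sequence argument localizes at each point $x \in X$, where $V_x$ is canonically an affine space over $\kappa(x)$, so local triviality of $V$ itself is never needed.
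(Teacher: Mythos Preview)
Your proof is correct and follows essentially the same spectral sequence comparison argument as the paper: build the coniveau spectral sequences for $\id_X$ and for $\pi$, identify the $E_1$-pages via Theorem \ref{SpectralSequenceComputation}, use Proposition \ref{PropertyH} to see that the fiberwise pullbacks $\pi_x^*$ are isomorphisms, and conclude by comparison of convergent bounded spectral sequences, with Proposition \ref{Prop4.4} supplying the morphism of exact couples.

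The one genuine difference is that the paper begins with a noetherian induction together with the localization long exact sequence \ref{LocalizationSequence} to reduce to the case $V=\AAA^n_X$ before running the spectral sequence argument, whereas you skip this reduction and invoke Proposition \ref{PropertyH} directly on each fiber $V_x$. Your observation that $V_x\simeq \AAA^n_{\kappa(x)}$ over a field makes this reduction redundant is valid: Proposition \ref{PropertyH} applies to $V_x$ after any trivialization, and since the map $\pi_x^*$ itself is canonical, the conclusion that it is an isomorphism does not depend on that choice. So your route is slightly more direct; the paper's preliminary reduction buys nothing essential beyond psychological comfort that one is working with a globally trivial bundle.
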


\begin{proof}

From a noetherian induction and the localization sequence \ref{LocalizationSequence}, we can reduce to the case where $V=\AAA^n_X$ is the affine vector bundle of rank $n$. With the previous notations, Theorem \ref{SpectralSequenceHmtpInvariance} gives the spectral sequence
\begin{center}

$E^{p,q}_{1,\pi} \Rightarrow A^{p+q}(V,M,- \TT_{V/X}+\VV_V)$
\end{center}
where $E^{p,q}_{1,\pi} $ is (abusively) defined as previously, but twisted accordingly.
By Theorem \ref{SpectralSequenceComputation}, the page $E^{p,q}_{1,\pi}$ is isomorphic to $\bigoplus_{x\in X^{(p)}}A^q(V_x,M,-\TT_{V_x/X_x}+\VV_{V_x})$. According to Proposition \ref{PropertyH}, this last expression is isomorphic (via the map $\pi$) to $\bigoplus_{x\in X^{(p)}}A^q(\Spec \kappa(x),M,\VV_x)$ (in other words, the theorem is true when $X$ is the spectrum of a field). Using again \ref{SpectralSequenceComputation}, this group is isomorphic to $E^{p,q}_{1,\Id_X}$, which converge to $A^{p+q}(X,M,\VV_X)$. By Proposition \ref{Prop4.4}, the map $\pi$ induces a morphism of exact couples $(D^{p,q}_{1,\Id_X},E^{p,q}_{1,\Id_X})\to (D^{p,q}_{1,\pi},E^{p,q}_{1,\pi})$ hence we have compatible isomorphisms on the pages which induce the pullback ${\pi^*:A^q(X,M,\VV_X)\to A^q(V,M,-\TT_{V/X}+\VV_V)}$ (see also \cite[5.2.12]{Weibel}).

\end{proof}

\section{Gysin Morphisms}\label{GysinMorphisms}

\subsection*{Gysin morphisms for regular embeddings}\label{GyMoRegEmbedding}
We define Gysin morphisms for regular closed immersions and prove functoriality theorems. As always, the main tool is the deformation to the normal cone.

\par Let $i:Z\to X$ be a closed immersion, and let $\VV_X$ be a virtual bundle over the scheme $X$. Let $t$ be a parameter such that $\AAA_k=\Spec k[t]$, and let ${q:X\times_k (\AAA^1_k\setminus \{0\}) \to X}$ be the canonical projection. Denote by $D=D_ZX$ the deformation space such that ${D=U \sqcup N_ZX}$ where $U=X\times_k(\AAA^1_k\setminus \{0\})$ (see \cite[§10]{Rost96} for more).

Consider the morphism 
\begin{center}

$J(X,Z)=J_{Z/X}:C_*(X,M,\VV_X)\to C_*(N_ZX,M,\VV_{N_ZX})$

\end{center} defined by the composition:
\begin{center}

$
\xymatrix{
C_{p}(X,M,\VV_X) \ar[r]^-{q^*} \ar[d]^{J_{Z/X}} & C_{p+1}(U,M,-\TT_{U/X}+\VV_U) \ar[d]^{[t]} \\ C_{p}(N_ZX,M,\VV_{N_ZX})  & C_{p+1}(U,M,\VV_U) \ar[l]^-\partial 
}
$

\end{center}
where the multiplication with $t$ is twisted by the isomorphism $\TT_{U/X}\simeq \AAA^1_U$ (which only depends on $t$) and $\partial$ is the boundary map as in \ref{BoundaryMaps}. This defines a morphism (also denoted by $J(X,Z)$ or $J_{Z/X}$) by passing to homology.
\par 
Assume moreover that $i:Z\to X$ is regular of codimension $m$, the map $\pi : N_ZX \to Z$ is a vector bundle over $X$ of dimension $m$. By homotopy invariance (Theorem \ref{HomotopyInvariance}), we have an isomorphism
\begin{center}

$\pi^*:A_p(Z,M,\NN_ZX+\VV_Z) \to A_{p+m}(N_ZX,M,\VV_{N_ZX})$
\end{center}
where $\NN_ZX$ is the virtual vector bundle associated to $N_ZX$
(we have used the canonical isomorphism $\pi^*(\NN_ZX)=\TT_{N_ZX/Z}$). Denote by $r_{Z/X}=(\pi^*)^{-1}$ its inverse.

\begin{Def}

With the previous notations, we define the map
\begin{center}
$i^*:A_p(X,M,\VV_X)\to A_{p-m}(Z,M,\NN_ZX+\VV_Z)$

\end{center}
by putting $i^*=r_{Z/X}\circ J_{Z/X} $ and call it the {\em Gysin morphism of $i$}.
\end{Def}

The following lemmas are needed to prove functoriality of the previous construction (see Theorem \ref{GysinFunctoriality}).

\begin{Lem}\label{Lem11.3}

Let $i:Z\to X$ be a regular closed immersion and $g:V\to X$ be an essentially smooth morphism. Denote by $N(g)$ the projection from $N(V,{V\times_X Z}) =N_Z(X)\times_X V$ to $N_ZX$. Then
\begin{center}

$\Theta\circ J(V,V\times_X Z)\circ g^*=N(g)^*\circ J(X,Z)$,
\end{center}
where $\Theta$ comes from the canonical isomorphism $\TT_{N(V,Y\times_X V)/N(X,Y)}\simeq T_{V/X}\times_V N(V,Y\times_X V)$.
\end{Lem}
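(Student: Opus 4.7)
The plan is to prove the identity by unfolding $J$ according to its definition and commuting $g^*$ past each of its three constituent operations. Recall that $J_{Z/X} = \partial^U_{N_ZX}\circ[t]\circ q^*$, where $q:U=X\times_k(\AAA^1_k\setminus\{0\})\to X$ is the projection, $[t]$ is multiplication by the unit $t$ together with the canonical identification $\TT_{U/X}\simeq\AAA^1_U$, and $\partial$ is the boundary map associated to the closed immersion $N_ZX\hookrightarrow D_ZX$ whose open complement is $U$. Set $Y=V\times_X Z$ and let $\tilde g:U'=V\times_k(\AAA^1_k\setminus\{0\})\to U$ denote the base change of $g$; note that $\tilde g^*(t)=t$.

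First, the square formed by $q,q',g,\tilde g$ is cartesian with $g$ and $\tilde g$ essentially smooth, so Proposition \ref{Prop4.1}.2 (or rather the base change \ref{Prop4.1}.3 combined with functoriality) yields $q'^*\circ g^*=\tilde g^*\circ q^*$ up to the canonical identification of tangent bundles. Second, since $\tilde g^*(t)=t$, Proposition \ref{Lem4.3}.1 gives $[t]\circ\tilde g^*=\tilde g^*\circ[t]$, the identification $\TT_{U/X}\simeq\AAA^1_U$ being compatible with pullback. Third, the deformation-to-the-normal-cone construction commutes with flat base change, so $D_YV\simeq D_ZX\times_X V$; this identification restricts to $U'\simeq U\times_X V$ on the open part and to $N_YV\simeq N_ZX\times_X V$ on the closed part (whose projection is precisely $N(g)$). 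We therefore obtain a cartesian square of boundary triples with essentially smooth vertical maps, and Proposition \ref{Prop4.4}.2 gives $\partial^{U'}_{N_YV}\circ\tilde g^* = N(g)^*\circ\partial^U_{N_ZX}$.

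Composing these three commutations yields $J(V,Y)\circ g^* = N(g)^*\circ J(X,Z)$ up to a single canonical isomorphism of virtual vector bundles. The twist $-\TT_{V/X}$ introduced on the left by $g^*$ and carried through the composition must match the twist $-\TT_{N_YV/N_ZX}$ introduced on the right by $N(g)^*$; by base change of tangent bundles along $N_YV=N_ZX\times_X V$, these are canonically identified, and this identification is exactly the $\Theta$ of the statement.

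The main obstacle is the bookkeeping of these virtual bundle identifications at each step, since $q^*$, $[t]$ and $\partial$ each involve small canonical switches of summands that must fit together to produce precisely the stated $\Theta$. Once the three commutations are arranged in the correct order and the twists are tracked, the result follows without further computation.
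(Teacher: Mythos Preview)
Your proof is correct and follows essentially the same approach as the paper: the paper's proof simply refers to Rost's Lemma 11.3 and cites Proposition \ref{Prop4.1}.2, Proposition \ref{Prop4.4}.2 and Proposition \ref{Lem4.3} (the latter for the commutation of $[t]$ with the smooth pullback, i.e.\ part 1), which are precisely the three commutation steps you spell out. One small clarification: for the first step you only need functoriality of the essentially smooth pullback (Proposition \ref{Prop4.1}.2), since $g\circ q'=q\circ\tilde g$; no appeal to base change \ref{Prop4.1}.3 is required.
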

\begin{proof}
See \cite[Lemma 11.3]{Rost96}. This follows from our Proposition \ref{Prop4.1}.2, Proposition \ref{Prop4.4}.2 and Proposition \ref{Lem4.3}.2.
\end{proof}

\begin{Lem}\label{Lem11.4}
Let $Z\to X$ be a closed immersion and let $p:X\to Y$ be essentially smooth. Let $\VV_Y$ be a virtual vector bundle over $Y$. Suppose that the composite 
\begin{center}

$q:N_ZX\to Z \to X \to Y$
\end{center}
is essentially smooth of same relative dimension as $p$. Then 
\begin{center}

$\Theta\circ J(X,Z)\circ p^*=q^*$
\end{center}
where $\Theta$ comes from the canonical isomorphism $\TT_{N_ZX/Y}\simeq  \TT_{V/Y}\times_X N_ZX$.
\end{Lem}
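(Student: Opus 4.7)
The strategy is to extend $p$ essentially smoothly to the deformation space $D = D_Z X$ and then invoke a relative version of Lemma \ref{Lem4.5}. From the Rees algebra (or blow-up) description of $D$, one obtains a canonical projection $\alpha : D \to X$ whose restriction to $U = X \times_k(\AAA^1_k \setminus \{0\})$ is the first projection and whose restriction to $N_Z X$ is the composite $N_Z X \to Z \hookrightarrow X$. Setting $\tilde p = p \circ \alpha : D \to Y$, one has $\tilde p \circ j = p \circ q_U$ and $\tilde p \circ k = q$, where $j : U \hookrightarrow D$ and $k : N_Z X \hookrightarrow D$ are the canonical immersions and $q_U = \alpha|_U$. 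The hypothesis that $q$ is essentially smooth of the same relative dimension as $p$, combined with the fact that $N_Z X$ is cut out in $D$ by the regular parameter $t$, should imply that $\tilde p$ is essentially smooth of relative dimension $\mathrm{rd}(p) + 1$ in a Zariski neighborhood of $N_Z X$; after shrinking $D$ I may assume $\tilde p$ is essentially smooth throughout.

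Next I would apply the functoriality of essentially smooth pullback (Proposition \ref{Prop4.1}.2) to the factorization $p \circ q_U = \tilde p \circ j$, obtaining $q_U^* \circ p^* = j^* \circ \tilde p^*$ up to a canonical twist isomorphism. Substituting into the defining composite $J(X, Z) = \partial \circ [t] \circ q_U^*$, the lemma reduces to establishing the identity
\[
\Theta \circ \partial \circ [t] \circ j^* \circ \tilde p^* = q^*
\]
on $C_*(Y, M, \VV_Y)$. This is a fibered analogue of Lemma \ref{Lem4.5}: at each generic point, the cycle $\tilde p^*(\xi)$ for $\xi \in C_*(Y, M, \VV_Y)$ is constant along the $\AAA^1$-factor of $D$ (since $\tilde p$ factors through $\alpha$), so multiplying by $[t]$ and taking the residue at $t = 0$ computes, via rule \ref{itm:R3d}, the pullback of $\xi$ along $\tilde p \circ k = q$, which is exactly $q^*(\xi)$. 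The virtual bundle bookkeeping is compatible: the $\AAA^1$-twist introduced by multiplication with $[t]$ cancels against the trivialization $\NN_{N_Z X/D} \simeq \AAA^1_{N_Z X}$ given by $t$, and the cotangent sequence identifies $\TT_{D/Y}|_{N_Z X}$ with $\TT_{N_Z X/Y} + \AAA^1_{N_Z X}$, leaving the predicted isomorphism $\TT_{N_Z X/Y} \simeq \TT_{X/Y}|_{N_Z X}$.

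The main obstacle will be verifying the essential smoothness of $\tilde p$ near $N_Z X$, which requires showing $D$ is flat over $Y$ with smooth fibers in a neighborhood of $N_Z X$. This should follow from the smoothness of both $p$ and $q$ combined with the fact that $t$ is a non-zero-divisor on $D$, but needs some care. A secondary difficulty will be tracking the accumulated canonical isomorphisms of virtual vector bundles at each step to ensure that they combine precisely into the $\Theta$ of the statement; the rule \ref{itm:R4a} guarantees that such isomorphisms are transparent, which should make the verification manageable.
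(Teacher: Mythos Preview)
Your approach is essentially the paper's, but the paper organizes the argument in a way that avoids the two obstacles you flag. Rather than mapping $D$ directly to $Y$ via $\tilde p = p\circ\alpha$, the paper uses the composite
\[
f:\; D(X,Z)\;\longrightarrow\; X\times_k \AAA^1_k \;\xrightarrow{\;p\times\Id\;}\; Y\times_k \AAA^1_k,
\]
and observes (following Rost) that $f$ is flat on all of $D$ with essentially smooth fibers, hence essentially smooth globally. This removes the need to shrink $D$: no local argument near $N_ZX$ is required.

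With $f$ essentially smooth, the identity follows formally: write $q_U^{*}\circ p^{*} = (f|_U)^{*}\circ \pi^{*}$ where $\pi:Y\times(\AAA^1\setminus\{0\})\to Y$ is the projection, commute $[t]$ past $(f|_U)^{*}$ (Proposition~\ref{Lem4.3}.1), commute $\partial$ past $(f|_U)^{*}$ using Proposition~\ref{Prop4.4}.2 applied to the cartesian boundary diagram over $Y\times\{0\}\hookrightarrow Y\times\AAA^1$, and one is left with $q^{*}\circ\bigl(\partial\circ[t]\circ\pi^{*}\bigr)$, which is $q^{*}$ by Lemma~\ref{Lem4.5} applied to the trivial line bundle $Y\times\AAA^1\to Y$. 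Your ``fibered analogue of Lemma~\ref{Lem4.5}'' is thus absorbed into a literal application of Lemma~\ref{Lem4.5} on the base, at the cost of one invocation of Proposition~\ref{Prop4.4}.2; this also makes the virtual-bundle bookkeeping automatic, since all the twist identifications come from the functoriality statements already proved. Your direct argument at generic points would work, but you would have to control the fibers $D_y$ (which need not be irreducible) and reprove \ref{itm:R3d}-style cancellation there; factoring through $Y\times\AAA^1$ sidesteps this entirely.
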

\begin{proof} Same as \cite[Lemma 11.4]{Rost96} except that Rost only needs the composite morphism
\begin{center}
$\xymatrix{
f:D(X,Z) \ar[r] &  X\times_k \AAA^1_k \ar[r]^{p\times \Id} & Y\times_k \AAA^1_k
}$ 
\end{center} to be flat. We need moreover (in order to use our Proposition \ref{Lem4.5}) the fact that $f$ is essentially smooth which is true because it is flat and its fibers are essentially smooth.
\end{proof}
\begin{Lem}\label{Lem11.6}
Let $\rho:T\to T'$ be a morphism, let $T_1',T_2'\subset T'$ be closed subschemes and let $T_i=T\times_{T'} T_i'$ for $i=1,2$. Let $\VV_{T'}$ be a virtual bundle vector bundle over $T'$.
\par Put $T_3=T\setminus (T_1\cup T_2)$, $T_0=T_1\subset T_2$, $\tilde{T_1}=T_1\setminus T_0$, $\tilde{T_2}=T_2\setminus T_0$ and let $\partial^3_1, \partial^1_0, \partial^3_2, \partial^2_0$ be the boundary maps for the closed immersions
\begin{center}

$\tilde{T_1}\to T \setminus T_2$, $T_0\to T_1$, $\tilde{T_2}\to T\setminus T_1$, $T_0\to T_2$,
\end{center}
respectively. Then
\begin{center}

$0= \partial^1_0\circ \partial^3_1+\partial^2_0\circ \partial^3_2:[T_3,\VV_{T_3}]\bullet\!\!\! \to [T_0,\VV_{T_0}]$
\end{center}
at the homology level (recall the notation introduced in \ref{GeneralizedCorr}).
\end{Lem}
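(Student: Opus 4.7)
The plan is to deduce the identity from Proposition \ref{dod=0} ($d_T \circ d_T = 0$) applied to the four-part decomposition of $C_*(T,\VV_T)$ corresponding to the set-theoretic partition $T = T_0 \sqcup \tilde T_1 \sqcup \tilde T_2 \sqcup T_3$. As a graded abelian group this gives
\begin{center}
$C_*(T, \VV_T) = C_*(T_0, \VV_{T_0}) \oplus C_*(\tilde T_1, \VV_{\tilde T_1}) \oplus C_*(\tilde T_2, \VV_{\tilde T_2}) \oplus C_*(T_3, \VV_{T_3})$,
\end{center}
each twist being induced from $\VV_T = \rho^* \VV_{T'}$.

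With respect to the ordering $(T_0, \tilde T_1, \tilde T_2, T_3)$, I would then express the differential $d_T$ as the block matrix
\begin{center}
$d_T = \begin{pmatrix}
d_{T_0} & \partial^1_0 & \partial^2_0 & \partial^3_0 \\
0 & d_{\tilde T_1} & 0 & \partial^3_1 \\
0 & 0 & d_{\tilde T_2} & \partial^3_2 \\
0 & 0 & 0 & d_{T_3}
\end{pmatrix}$.
\end{center}
The vanishing of the entries below the diagonal follows from $\overline{\{x\}} \subset T_i$ for $x \in T_i$ (with $i = 0, 1, 2$), together with the fact that $\overline{\tilde T_i} \cap \tilde T_{3-i} = \varnothing$ for $i = 1, 2$. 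Up to the canonical twist identifications recalled in \ref{RmkDefMWmodules5}, the four strict off-diagonal entries $\partial^3_1, \partial^3_2, \partial^1_0, \partial^2_0$ are exactly the boundary maps appearing in the statement, as defined in \ref{BoundaryMaps}. The additional entry $\partial^3_0: C_*(T_3, \VV_{T_3}) \to C_{*-1}(T_0, \VV_{T_0})$ records the direct specializations of points of $T_3$ onto $T_0$; it does not appear in the statement but is forced upon us by the matrix decomposition.

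Extracting the $(T_0, T_3)$ entry (target $T_0$, source $T_3$) of the identity $d_T \circ d_T = 0$ yields
\begin{center}
$\partial^1_0 \circ \partial^3_1 + \partial^2_0 \circ \partial^3_2 + d_{T_0} \circ \partial^3_0 + \partial^3_0 \circ d_{T_3} = 0$.
\end{center}
Equivalently, $\partial^3_0$ is a chain homotopy between $\partial^1_0 \circ \partial^3_1 + \partial^2_0 \circ \partial^3_2$ and the zero map; passing to homology gives the desired identity as a morphism $[T_3, \VV_{T_3}] \bullet\!\!\!\to [T_0, \VV_{T_0}]$, which is the content of the lemma. The main obstacle is purely one of bookkeeping: verifying the above block form, in particular identifying each strict off-diagonal entry with the correct boundary map of \ref{BoundaryMaps} and keeping track of the canonical twist identifications arising from the short exact sequences of normal bundles. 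Once this is done, the conclusion follows from Proposition \ref{dod=0} alone.
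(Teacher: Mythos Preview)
Your proof is correct and follows essentially the same approach as the paper: decompose $C_*(T,\VV_T)$ according to the set-theoretic partition $T=T_0\sqcup\tilde T_1\sqcup\tilde T_2\sqcup T_3$ and read off the $(T_0,T_3)$-component of $d_T\circ d_T=0$. Your treatment is in fact more explicit than the paper's, in that you identify the extra term $\partial^3_0$ as the chain homotopy explaining why the identity holds only at the homology level.
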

\begin{proof}

Corresponding to the set theoretic decomposition of $T$ we have
\begin{center}

$A_*(T,M,\VV_T)=A_*(T_0,M,\VV_{T_0})\oplus A_*(\tilde{T_1},M,\VV_{\tilde{T_1}}) \oplus  A_*(\tilde{T_2},M,\VV_{\tilde{T_2}})\oplus A_*(T_3,M,\VV_{T_3}).$
\end{center}
Then $d_T\circ d_T=0$ gives the result.
\end{proof}

\begin{Lem}\label{Lem11.7}
Let $T=\overline{ D}=\overline{ D}(X,Y,Z)$, $T_1=\overline{ D}|(\{0\}\times \AAA^1_k)$, $T_2=\overline{ D}(\AAA^1_k\times \{0\})$ where $\overline{ D}$ is the double deformation cone (see \cite[§10.5]{Rost96}). We keep the notations of Lemma \ref{Lem11.6}. Then ${T_3=X\times_k (\AAA^1_k\setminus \{0\})\times (\AAA^1_k\setminus \{0\})}$ and $T_0=\overline{ D}|\{0,0\}$. Let $\pi:T_3\to X$ be the projection and let $t,s$ be the coordinates of $\AAA^2_k=\Spec k[t,s]$, so that $T_1=\{t=0\}$, $T_2=\{s=0\}$.
\par Let $Z\to Y\to X$ be regular closed immersions. Then (up to canonical isomorphisms $\Theta$ and $\Theta'$)
\begin{center}

$\partial^1_0\circ \partial^3_1\circ [t,s]\circ \pi^* =
\Theta \circ J(N_YX,N_YX|Z)\circ J(X,Y)$,\\
$\partial^2_0\circ \partial^3_2\circ [s,t] \circ \pi^* = \Theta' \circ J(N_ZX,N_ZY)\circ J(X,Z)$.
\end{center}
\end{Lem}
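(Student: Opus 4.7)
The plan is to prove both identities in parallel, since they are symmetric (exchanging the roles of the two deformation directions in $\overline{D}(X,Y,Z)$ and of the coordinates $t,s$). I will focus on the first identity; the second follows by the same argument with $(Y,t)$ and $(Z,s)$ swapped.

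First I would expand both sides of the first equality into their defining composites of the five basic maps and compare. The right hand side $\Theta\circ J(N_YX,\,N_YX|Z)\circ J(X,Y)$ is, by definition of $J$, a two-fold composite of the form ``pullback $\to$ multiply by coordinate $\to$ boundary,'' performed first along the deformation $D(X,Y)$ with parameter $t$ and then along the deformation $D(N_YX,\,N_YX|Z)$ with parameter $s$. The left hand side $\partial^1_0\circ\partial^3_1\circ [t,s]\circ\pi^*$ is a single composite taking place inside the double deformation cone $\overline{D}(X,Y,Z)$: the projection $\pi:T_3\to X$ is essentially smooth, $T_3$ sits as an open subscheme of $T$ and the two successive boundary maps $\partial^3_1$ and $\partial^1_0$ drop first to $T_1\setminus T_0=D(N_YX,N_YX|Z)\setminus N_YX|Z$ (the deformation space for the second immersion) and then to $T_0 = \overline{D}|\{(0,0)\}$, which is exactly the target of $J(N_YX,\,N_YX|Z)\circ J(X,Y)$.

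The key steps are then the following. (i) Use Proposition \ref{Prop4.4}.2 (compatibility of boundary with essentially smooth pullback) to push $\pi^*$ through $\partial^3_1$: the projection $T_3\to U_1:=(T\setminus T_2)\cap\{s\neq 0\}$ is smooth along the divisor $\tilde T_1$, so applying $\partial^3_1$ after pulling back along the intermediate projection reduces to taking the boundary of $D(X,Y)\times_k(\AAA^1_k\setminus\{0\})$ along $N_YX\times(\AAA^1_k\setminus\{0\})$. (ii) Split $[t,s]=[t]\cdot[s]$ and use Proposition \ref{Lem4.3}.2 to commute the factor $[t]$ with the boundary along $\{t=0\}$ (which absorbs it into the definition of $J(X,Y)$, using the identification of the deformation parameter with the coordinate of the normal-bundle direction) while the factor $[s]$ commutes freely past this first boundary (it is a unit there). (iii) After the first boundary, the remaining composite $\partial^1_0\circ[s]\circ(\text{pullback to }T_1\setminus T_0)$ acting on the output is exactly the definition of $J(N_YX,\,N_YX|Z)$, since $T_1\cong D(N_YX,N_YX|Z)$ with $s$ as the deformation parameter and $T_0$ as the zero-fiber. (iv) Combining (i)--(iii) and invoking Lemma \ref{Lem11.3} to treat the smooth base change from $T_3$ down to $(X\times(\AAA^1\setminus 0))$ times the extra $\AAA^1\setminus 0$-factor, one obtains exactly $J(N_YX,\,N_YX|Z)\circ J(X,Y)$, up to the canonical isomorphism $\Theta$.

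The main obstacle I expect is bookkeeping of the virtual-bundle twists: each application of $J$ introduces a canonical identification $\TT_{U/X}\simeq\AAA^1_U$ using the deformation parameter, and the double deformation yields two such parameters ($t$ and $s$) in a specific order. The reordering forced by commuting multiplication-by-unit with smooth pullback (Proposition \ref{Lem4.3}.1) and with boundary (Proposition \ref{Lem4.3}.2) introduces canonical switch isomorphisms of the $\AAA^1$-summands, which is exactly the role of $\Theta$ (and of $\Theta'$ in the second identity, where the order is reversed). Once all these switches are tracked, the identities follow from the compatibility properties of Section \ref{Compatibilities} together with Lemmas \ref{Lem11.3} and \ref{Lem11.4}; no new cycle-module axiom is needed beyond \ref{itm:R3d} (already used in the definition of $J$) and the $\epsilon$-commutativity recorded in \ref{RingStructure}.
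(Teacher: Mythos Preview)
Your proposal is correct and follows essentially the same approach as the paper, which simply refers to \cite[Lemma 11.7]{Rost96} with the caveat that one must track the twists carefully and invoke Lemma \ref{Lem11.3}. Your steps (i)--(iii) are exactly the unpacking of Rost's argument in this setting, and your discussion of the canonical switch isomorphisms $\Theta,\Theta'$ is precisely the extra care the paper alludes to. One small remark: Lemma \ref{Lem11.4} is not actually needed here---it enters only later, in the proof of Theorem \ref{GysinFunctoriality}, to identify the iterated $J$-maps with $(i\circ l)^*$; for the present lemma, the compatibilities of Section \ref{Compatibilities} and Lemma \ref{Lem11.3} suffice.
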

\begin{proof}

Same as \cite[Lemma 11.7]{Rost96} except one has to be careful with the twists (this uses Lemma \ref{Lem11.3}).
\end{proof}

\begin{The} \label{Thm13.1} \label{GysinFunctoriality}
Let $l:Z\to Y$ and $i:Y\to X$ be regular closed immersions of respective codimension $n$ and $m$. Then $i\circ l$ is a regular closed immersion of codimension $m+n$ and (up to a canonical isomorphism)
\begin{center}

$(i\circ l)^*= l^*\circ i^*$ as morphism $A_p(X,M,\VV_X)\to A_{p-m-n}(Z,M,\NN_ZX+\VV_Z)$.
\end{center}
\end{The}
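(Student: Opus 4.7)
The plan is to follow Rost's strategy (Theorem 13.1 of \cite{Rost96}), using the double deformation to the normal cone $\overline{D}=\overline{D}(X,Y,Z)$ to exhibit both $(i\circ l)^{*}$ and $l^{*}\circ i^{*}$ as the two boundary contributions of a single cycle on $\overline{D}$, so that the identity $d\circ d=0$ delivers the equality. The preceding Lemmas \ref{Lem11.3}--\ref{Lem11.7} are precisely the ingredients needed to carry this out, so the bulk of the work is already available.

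First, I would use the stratification of $T=\overline{D}$ set up in Lemma \ref{Lem11.7}, with open stratum $T_{3}=X\times_{k}(\mathbb{A}^{1}_{k}\setminus\{0\})^{2}$, divisors $T_{1}=\{t=0\}$ and $T_{2}=\{s=0\}$, and closed intersection $T_{0}=\overline{D}|\{0,0\}$, and apply Lemma \ref{Lem11.6} to obtain the relation $\partial^{1}_{0}\circ\partial^{3}_{1}+\partial^{2}_{0}\circ\partial^{3}_{2}=0$ on homology. Next, starting from $[t,s]\circ\pi^{*}$ applied to a class $\rho\in A_{p}(X,M,\mathcal{V}_{X})$, Lemma \ref{Lem11.7} identifies (up to the canonical twist isomorphisms $\Theta,\Theta'$) the two summands with $J(N_{Y}X,N_{Y}X|Z)\circ J(X,Y)$ and $J(N_{Z}X,N_{Z}Y)\circ J(X,Z)$. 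Composing with the inverse homotopy-invariance maps $r_{Y/X}$, $r_{Z/Y}$, $r_{Z/X}$ provided by Theorem \ref{HomotopyInvariance} then converts the first composition into $l^{*}\circ i^{*}$ (using Lemma \ref{Lem11.3} to commute $J(N_{Y}X,N_{Y}X|Z)$ past the smooth projection $N_{Y}X\to Y$, exposing a Gysin pullback for $l$), and the second into $(i\circ l)^{*}$.

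To make the two targets agree one invokes the canonical isomorphism of virtual vector bundles $\mathcal{N}_{Z/X}\simeq\mathcal{N}_{Z/Y}+l^{*}\mathcal{N}_{Y/X}$ coming from the standard short exact sequence of conormal sheaves for $Z\subset Y\subset X$; this is the canonical isomorphism asserted in the theorem. Finally, the sign/quadratic discrepancy between $[t,s]$ and $[s,t]$ introduced when symmetrising the two branches of Lemma \ref{Lem11.6} is absorbed by the $\epsilon$-commutativity of $\mathbf{K}^{\mathrm{MW}}$ (see \ref{RingStructure} and Proposition \ref{Lem4.3}), and matches the sign already present in that lemma, so the two contributions cancel exactly as required.

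The main obstacle is not the underlying cycle-theoretic identity (which is Rost's) but the systematic bookkeeping of virtual bundle twists: each application of $\pi^{*}$, $[t,s]$, a boundary map, or a $J$-morphism carries its own canonical isomorphism of virtual bundles (involving $\mathcal{T}_{U/X}\simeq\mathbb{A}^{1}_{U}$, $\mathcal{N}_{YX}$, switch isomorphisms, etc.), and one must verify that they compose into exactly the canonical isomorphism $\mathcal{N}_{Z/X}\simeq\mathcal{N}_{Z/Y}+l^{*}\mathcal{N}_{Y/X}$ on the target. In Rost's setting these twists are invisible (pure $\mathbb{Z}$-gradings), so this is the genuinely new and delicate part of the argument in the Milnor--Witt framework.
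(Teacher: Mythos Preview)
Your approach is essentially the paper's own: it too invokes Lemmas \ref{Lem11.3}, \ref{Lem11.6}, \ref{Lem11.7} and defers to Rost's Theorem 13.1, together with \cite[19.1.5(iii)]{EGA4} for the first assertion and the conormal exact sequence. The one ingredient you omit is Lemma \ref{Lem11.4}: to convert the second branch $J(N_{Z}X,N_{Z}Y)\circ J(X,Z)$ into $(i\circ l)^{*}=r_{Z/X}\circ J(X,Z)$ you must know that $J(N_{Z}X,N_{Z}Y)$ intertwines the vector-bundle pullbacks $N_{Z}X\to Z$ and $N_{N_{Z}Y}(N_{Z}X)\to Z$ (apply Lemma \ref{Lem11.4} with $p$ the projection $N_{Z}X\to Z$), and this is not a consequence of homotopy invariance alone. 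Lemma \ref{Lem11.3} handles the first branch as you say, but the paper explicitly lists \ref{Lem11.4} among the needed inputs for exactly this reason.
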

\begin{proof}
The first assertion follows from \cite[19.1.5.(iii)]{EGA4} which also gives the canonical isomorphism of virtual vector spaces $\NN_ZX=\NN_ZY+\NN_YX\times_Y Z$. The second assertion follows from \ref{Lem11.3}, \ref{Lem11.6}, \ref{Lem11.7} and \ref{Lem11.4} as in \cite[Theorem 13.1]{Rost96}.
\end{proof}

We conclude with one interesting property.

%
%
%
%
%
%
%
%
%
%
%

\begin{Pro}[Base change for regular closed immersions]
 \label{BaseChangeRegular}
Consider the cartesian square
\begin{center}
$\xymatrix{
Z \ar[r]^i \ar[d]^g & X \ar[d]^f \\
Z'\ar[r]^{i'} & X'
}$
\end{center}
where $i,i'$ are regular closed immersions and $f,g$ are proper morphisms. Suppose moreover that we have a canonical isomorphism of virtual vector spaces $\NN_{Z'}X' \simeq \NN_ZX\times_X X'$. Then (up to the canonical isomorphism $\Theta$ induced by the previous isomorphism)
\begin{center}

$\Theta \circ g_*\circ i^* = i'^*\circ f_*$.
\end{center}
\end{Pro}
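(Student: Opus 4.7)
The plan is to decompose the Gysin morphisms $i^{*}=r_{Z/X}\circ J_{Z/X}$ and $i'^{*}=r_{Z'/X'}\circ J_{Z'/X'}$, and to verify that each of the two factors is compatible with proper pushforward along the cartesian square, up to the canonical isomorphism $\Theta$ supplied by the hypothesis on the normal bundles.

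First I would exploit the functoriality of the deformation to the normal cone. Because the square is cartesian and $f$ is proper, $f$ lifts to a proper morphism $D_Z X\to D_{Z'}X'$ which restricts to $f\times\Id$ on $X\times_k(\AAA^1_k\setminus\{0\})$ and to a proper morphism $N(f):N_Z X\to N_{Z'}X'$ on the special fibre. Concatenating three compatibilities from Section~\ref{Compatibilities}---base change of proper pushforward against the essentially smooth projection $q$ (Proposition~\ref{Prop4.1}.3), the projection formula for multiplication with the unit $t$ (Proposition~\ref{Lem4.2}), and commutation of proper pushforward with the boundary map (Proposition~\ref{Prop4.4}.1)---will produce, at the chain level, the identity
\[
J_{Z'/X'}\circ f_{*} \;=\; N(f)_{*}\circ J_{Z/X},
\]
up to the canonical twists coming from the identification of the relative tangent bundles of $q$ and $q'$.

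Second I would reduce the compatibility of the homotopy inverse $r_{Z/X}$ with pushforward to another instance of the same base change statement. The hypothesis that $\NN_{Z'}X'$ is pulled back from $\NN_Z X$ combined with cartesianity of the original square produces a cartesian square
\[
\xymatrix@C=20pt@R=14pt{
N_Z X \ar[r]^{\pi}\ar[d]_{N(f)} & Z \ar[d]^{g} \\
N_{Z'}X' \ar[r]^{\pi'} & Z'
}
\]
in which the vertical arrows are proper and the horizontal arrows are smooth vector bundle projections of the same rank $m$. Proposition~\ref{Prop4.1}.3 applied to this square gives $\pi'^{*}\circ g_{*}=\Theta\circ N(f)_{*}\circ\pi^{*}$, and since $\pi^{*}$ and $\pi'^{*}$ are invertible by homotopy invariance (Theorem~\ref{HomotopyInvariance}), inverting yields $r_{Z'/X'}\circ N(f)_{*}=\Theta^{-1}\circ g_{*}\circ r_{Z/X}$. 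Concatenating with the identity of the previous paragraph:
\[
i'^{*}\circ f_{*}
 = r_{Z'/X'}\circ J_{Z'/X'}\circ f_{*}
 = r_{Z'/X'}\circ N(f)_{*}\circ J_{Z/X}
 = \Theta^{-1}\circ g_{*}\circ r_{Z/X}\circ J_{Z/X}
 = \Theta^{-1}\circ g_{*}\circ i^{*},
\]
which is the claimed equality (up to the direction of $\Theta$).

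The main obstacle will not be any single one of the three compatibilities invoked, each of which is a direct application of a result from Section~\ref{Compatibilities}; the real difficulty is the systematic bookkeeping of the twists. One must check that the $\Theta$'s hidden inside $J_{Z/X}$ (coming from the identification $\TT_{U/X}\simeq\AAA^1_U$ used to interpret multiplication by $t$) and the $\Theta$ coming from the normal bundle identification combine coherently, and that no sign is missed from any $\epsilon$-commutation of virtual vector bundles when the multiplication by $[t]$ crosses the proper pushforward.
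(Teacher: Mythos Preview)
Your proposal is correct and follows essentially the same route as the paper: the paper presents the argument as a single four-square diagram (squares for $q^{*}$, $[t]$, $\partial$, and the homotopy inverse $\pi^{*}$), invoking exactly the three compatibilities you name (Proposition~\ref{Prop4.1}.3, Proposition~\ref{Lem4.2}, and Proposition~\ref{Prop4.4}.1), whereas you package the first three squares as the identity $J_{Z'/X'}\circ f_{*}=N(f)_{*}\circ J_{Z/X}$ and treat the last one separately. Your closing remark about twist bookkeeping is apt but does not signal any actual gap; the paper simply absorbs these into the phrase ``up to canonical isomorphism.''
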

\begin{proof}
It suffices to prove that the following diagram is commutative (recall the notation introduced in \ref{GeneralizedCorr}):
\begin{center}

$\xymatrixcolsep{4pc}\xymatrix{
X \ar@{{*}->}[r]^-{q^*} \ar@{{*}->}[d]^{f_*} \ar@{}[rd]|-{(1)} &  X\times (\AAA^1_k\setminus\{0\}) \ar@{{*}->}[r]^{[t]} \ar@{{*}->}[d]^{(f\times \Id)_*} \ar@{}[rd]|-{(2)} & 
 X\times (\AAA^1_k\setminus\{0\})  \ar@{{*}->}[r]^-{\partial} \ar@{{*}->}[d]^{(f\times \Id)_*} \ar@{}[rd]|-{(3)} & \NN_ZX   \ar@{{*}->}[r]^-{\simeq} \ar@{{*}->}[d]^{h_*} \ar@{}[rd]|-{(4)} & Z \ar@{{*}->}[d]^{g_*} \\
 X' \ar@{{*}->}[r]^-{q'^*} &  X'\times (\AAA^1_k\setminus\{0\}) \ar@{{*}->}[r]^{[t]}  & 
  X'\times (\AAA^1_k\setminus\{0\})  \ar@{{*}->}[r]^-{\partial'} & \NN_{Z'}X'   \ar@{{*}->}[r]^-{\simeq} & Z'
}$
\end{center}
with obvious notations (see the definition of Gysin morphisms).
The (cartesian) squares (1) and (4) commute by the base change theorem for essentially smooth morphisms (see Proposition \ref{Prop4.1}.3). The squares (2) and (3) commute by Proposition \ref{Lem4.2}.1 and Proposition \ref{Prop4.4}.1, respectively.
\end{proof}
\subsection*{Gysin morphisms for lci projective morphisms}
We define Gysin morphisms for lci projective morphisms\footnote{By definition, a map $f:Y\to X$ between two schemes is called a {\em local complete intersection projective} morphism (or {\em lci projective}) if there exist a natural number $n$ and a factorization 
$\xymatrix{ Y \ar[r]^i & \PP^n_X \ar[r]^p & X}$ of $f$ into a regular closed immersion followed by the canonical projection.} and prove functoriality theorems (see \cite[§5]{Deg08n2} for similar results in the classical oriented case). One could also define Gysin morphisms for morphisms between two essentially smooth schemes as in \cite[§12]{Rost96}.
                 %
                 %

\begin{Lem}\label{GysinLem5.9}

Consider a regular closed immersion $i:Z\to X$ and a natural number $n$. Consider the pullback square

\begin{center}

$\xymatrix{
\PP^n_Z \ar[r]^l \ar[d]^q & \PP^n_X \ar[d]^p \\
Z \ar[r]^i & X.
}$
\end{center}
Then $l^*\circ p^*=q^*\circ i^*$ (up to the canonical isomorphism induced by $\TT_q-q^*\NN_i\simeq l^*\TT_p-\NN_l$).
\end{Lem}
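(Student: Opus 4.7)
The plan is to reduce this to Lemma~\ref{Lem11.3}, which expresses the compatibility between the deformation morphism $J$ and essentially smooth pullback. Recall that by construction $i^* = r_{Z/X}\circ J_{Z/X}$ and $l^* = r_{\PP^n_Z/\PP^n_X}\circ J_{\PP^n_Z/\PP^n_X}$, where $r_{\bullet}$ denotes the inverse of the homotopy invariance isomorphism from Theorem~\ref{HomotopyInvariance}. The strategy is therefore to commute $p^*$ past $J_{\PP^n_Z/\PP^n_X}$ using Lemma~\ref{Lem11.3}, and then to commute $q^*$ past $r_{Z/X}$ using functoriality of essentially smooth pullback applied to the induced cartesian square of normal bundles.

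First I would apply Lemma~\ref{Lem11.3} to the essentially smooth morphism $p:\PP^n_X\to X$ and the regular closed immersion $i:Z\to X$. Since the given square is cartesian, $\PP^n_X\times_X Z = \PP^n_Z$, and the lemma yields
\begin{center}
$N(p)^*\circ J_{Z/X} = \Theta_1\circ J_{\PP^n_Z/\PP^n_X}\circ p^*$,
\end{center}
where $N(p): N_{\PP^n_Z}\PP^n_X \simeq N_ZX\times_Z\PP^n_Z \to N_ZX$ is the induced projection on normal cones and $\Theta_1$ is the canonical twist coming from $\TT_{N(p)}\simeq \TT_p\times_{\PP^n_X} N_{\PP^n_Z}\PP^n_X$.

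Next, consider the pullback square of essentially smooth morphisms
\begin{center}
$\xymatrix{
N_{\PP^n_Z}\PP^n_X \ar[r]^-{N(p)} \ar[d]_{\pi'} & N_ZX \ar[d]^{\pi_{Z/X}} \\
\PP^n_Z \ar[r]^q & Z.
}$
\end{center}
Applying Proposition~\ref{Prop4.1}.2 to both composites of $\pi_{Z/X}\circ N(p) = q\circ\pi'$ yields $N(p)^*\circ \pi_{Z/X}^* = \pi'^*\circ q^*$ (up to a canonical isomorphism). Inverting $\pi_{Z/X}^*$ and $\pi'^*$ via homotopy invariance then gives $r_{\PP^n_Z/\PP^n_X}\circ N(p)^* = \Theta_2\circ q^*\circ r_{Z/X}$. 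Combining these identities:
\begin{center}
$l^*\circ p^* = r_{\PP^n_Z/\PP^n_X}\circ J_{\PP^n_Z/\PP^n_X}\circ p^* = \Theta_3\circ r_{\PP^n_Z/\PP^n_X}\circ N(p)^*\circ J_{Z/X} = \Theta\circ q^*\circ r_{Z/X}\circ J_{Z/X} = \Theta\circ q^*\circ i^*$.
\end{center}

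The main obstacle will be bookkeeping the various canonical isomorphisms of virtual vector bundles and verifying that their composite $\Theta$ matches the isomorphism predicted by the statement, namely the one induced by $\TT_q - q^*\NN_i \simeq l^*\TT_p - \NN_l$. This is however essentially forced by the cartesianness of the original square, which yields the canonical identifications $\NN_l\simeq q^*\NN_i$ and $\TT_q\simeq l^*\TT_p$ as virtual bundles; the individual twists produced by Lemma~\ref{Lem11.3} and Proposition~\ref{Prop4.1}.2 are exactly the canonical compatibilities between tangent and normal bundles for such a square, so they should assemble into precisely the stated isomorphism.
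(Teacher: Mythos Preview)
Your proof is correct and follows exactly the route indicated by the paper, whose proof reads in full: ``This follows from the definitions and Lemma~\ref{Lem11.3}.'' You have simply made explicit what the paper leaves implicit: unpacking $i^*=r_{Z/X}\circ J_{Z/X}$ and $l^*=r_{\PP^n_Z/\PP^n_X}\circ J_{\PP^n_Z/\PP^n_X}$, applying Lemma~\ref{Lem11.3} to commute $p^*$ past $J$, and using Proposition~\ref{Prop4.1}.2 on the cartesian square of normal bundles to commute $q^*$ past $r$.
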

\begin{proof}

This follows from the definitions and Lemma \ref{Lem11.3}.
\end{proof}

\begin{Lem}\label{GysinLem5.10}
Consider a natural number $n$ and an essentially smooth scheme $X$. Let $p:\PP_X^n\to X$ be the canonical projection. Then for any section $s:X\to \PP^n_X$ of $p$, we have $s^*p^*=\Id$ (up to a canonical isomorphism).
\end{Lem}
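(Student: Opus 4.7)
The plan is to invoke Lemma \ref{Lem11.4} with the closed immersion playing the role of $Z \to X$ being the section $s : X \to \PP^n_X$ itself, and the essentially smooth morphism playing the role of $p : X \to Y$ being the canonical projection $p : \PP^n_X \to X$. First I would observe that $s$ is automatically a regular closed immersion of codimension $n$ (as a section of a smooth projective morphism of relative dimension $n$), so that the Gysin morphism $s^*$ is defined and decomposes by construction as
\[
s^* \;=\; r_{s(X)/\PP^n_X} \circ J_{s(X)/\PP^n_X}
\]
where $r_{s(X)/\PP^n_X}$ is the inverse of the pullback along the vector bundle projection $\pi : N_{s(X)} \PP^n_X \to s(X)$, well defined by Theorem \ref{HomotopyInvariance}.

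The key point is that the composite
\[
q : N_{s(X)} \PP^n_X \longrightarrow s(X) \hookrightarrow \PP^n_X \xrightarrow{\;p\;} X
\]
coincides, after the identification $s(X) \simeq X$ furnished by $p \circ s = \Id_X$, with the bundle projection $\pi$. In particular $q$ is essentially smooth of relative dimension $n$, which is the same relative dimension as $p$. Hence Lemma \ref{Lem11.4} applies and yields
\[
\Theta \circ J_{s(X)/\PP^n_X} \circ p^* \;=\; q^* \;=\; \pi^*
\]
up to a canonical isomorphism $\Theta$. Composing both sides with $r_{s(X)/\PP^n_X} = (\pi^*)^{-1}$ then gives $s^* \circ p^* = \Id$, as desired.

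The only genuine work is bookkeeping of the virtual bundle twists. The composite $s^* \circ p^*$ a priori lands in cohomology twisted by $\NN_{s(X)}\PP^n_X - s^*\TT_{\PP^n_X/X} + \VV_X$; one must identify this with $\VV_X$ via the canonical isomorphism $\NN_{s(X)}\PP^n_X \simeq s^*\TT_{\PP^n_X/X}$ arising from the splitting of the conormal sequence $0 \to \NN_s \to s^*\Om_{\PP^n_X/k} \to \Om_{X/k} \to 0$ induced by the section $s$. This is the canonical isomorphism hidden behind the ``up to canonical isomorphism'' clause in the statement, and is the main (purely bookkeeping) obstacle.
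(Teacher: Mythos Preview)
Your argument is correct. You invoke Lemma \ref{Lem11.4} with $Z \to X$ replaced by the section $s : X \to \PP^n_X$ and $p : X \to Y$ replaced by the projection $p : \PP^n_X \to X$; the hypothesis is satisfied because the composite $q$ is exactly the bundle projection $\pi$, smooth of the same relative dimension $n$ as $p$. Composing the conclusion $J_{s(X)/\PP^n_X}\circ p^* = \pi^*$ with $r_{s(X)/\PP^n_X} = (\pi^*)^{-1}$ gives the identity, and your handling of the twist via $\NN_s \simeq s^*\TT_{\PP^n_X/X}$ is the right bookkeeping.

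The paper's proof is different: it reduces to the case $X = \Spec k$ and then appeals directly to rule \ref{itm:R3d} (in the spirit of Lemma \ref{Lem4.5}). Your route is more structural and avoids the reduction step entirely, at the cost of relying on Lemma \ref{Lem11.4} as a black box---but since the proof of Lemma \ref{Lem11.4} itself ultimately unwinds to Lemma \ref{Lem4.5} and \ref{itm:R3d}, the two arguments rest on the same foundation. Your approach has the advantage of working uniformly for arbitrary $X$ without any reduction, and it makes the role of the normal bundle transparent; the paper's approach is more hands-on but leaves the reduction step implicit.
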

\begin{proof}
We can assume that $X=\Spec k$, then apply rule \ref{itm:R3d} (see also the proof of Lemma \ref{Lem4.5}).
\end{proof}

\begin{Lem} \label{GysinLem5.11}
Consider the following commutative diagram:
\begin{center}

$\xymatrix{
 & \PP^n_X \ar[rd]^p & \\
 Y \ar[ru]^i \ar[rd]_{i'} & & X \\
  & \PP^m_X \ar[ru]_q & 
  }$
\end{center}
where $i,i'$ are regular closed immersions and $p,q$ are the canonical projection. Then $i^*\circ p^*={i'}^*\circ q^*$ (up to the canonical isomorphism induced by $\NN_{i'}-(i')^*\TT_q \simeq \NN_i - i^*\TT_p$).
\end{Lem}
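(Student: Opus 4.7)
The plan is to reduce both sides to a common Gysin morphism via a graph-type construction. Form the fiber product $Z = \PP^n_X \times_X \PP^m_X$ with its two projections $\pi_1 : Z \to \PP^n_X$ and $\pi_2 : Z \to \PP^m_X$; both are essentially smooth, and let $r = p \circ \pi_1 = q \circ \pi_2 : Z \to X$ denote their common composition. Let $j = (i, i') : Y \to Z$ be the map determined by $\pi_1 \circ j = i$ and $\pi_2 \circ j = i'$. Identifying $Y \times_{\PP^n_X} Z \simeq \PP^m_Y$, one factors $j$ as
\begin{center}
$\xymatrix{Y \ar[r]^{s} & \PP^m_Y \ar[r]^{\tilde i} & Z}$
\end{center}
where $\tilde i$ is the pullback of $i$ along $\pi_2$ and $s = (\Id_Y, i')$ is the section of the natural projection $p_Y : \PP^m_Y \to Y$ determined by $i'$. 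Both $\tilde i$ and $s$ are regular closed immersions, hence so is $j$.

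The key step is the identity $j^* \circ \pi_1^* = i^*$ (and symmetrically $j^* \circ \pi_2^* = (i')^*$), up to canonical isomorphism. Indeed, Lemma \ref{GysinLem5.9} applied to the cartesian square
\begin{center}
$\xymatrix{\PP^m_Y \ar[r]^{\tilde i} \ar[d]_{p_Y} & Z \ar[d]^{\pi_1} \\ Y \ar[r]^{i} & \PP^n_X}$
\end{center}
yields $\tilde i^* \circ \pi_1^* = p_Y^* \circ i^*$. Functoriality of Gysin morphisms for composable regular closed immersions (Theorem \ref{GysinFunctoriality}) gives $j^* = s^* \circ \tilde i^*$, and Lemma \ref{GysinLem5.10} applied to the section $s$ of the projective bundle $p_Y$ yields $s^* \circ p_Y^* = \Id$. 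Chaining these three identities, $j^* \circ \pi_1^* = s^* \circ \tilde i^* \circ \pi_1^* = s^* \circ p_Y^* \circ i^* = i^*$. The second identity $j^* \circ \pi_2^* = (i')^*$ is obtained by the analogous factorization of $j$ through $\PP^n_Y$.

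Since $p \circ \pi_1 = q \circ \pi_2 = r$, functoriality of essentially smooth pullback (Proposition \ref{Prop4.1}.2) gives $\pi_1^* \circ p^* = r^* = \pi_2^* \circ q^*$ up to canonical isomorphism. Combining with the preceding step:
\begin{center}
$i^* \circ p^* = j^* \circ \pi_1^* \circ p^* = j^* \circ r^* = j^* \circ \pi_2^* \circ q^* = (i')^* \circ q^*$,
\end{center}
which is the desired equality. The main obstacle will be the bookkeeping of the canonical isomorphisms of virtual bundles produced at each step by Lemma \ref{GysinLem5.9}, Theorem \ref{GysinFunctoriality}, and Proposition \ref{Prop4.1}.2, and verifying that their cumulative effect matches the stated isomorphism $\NN_{i'} - (i')^* \TT_q \simeq \NN_i - i^* \TT_p$. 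This is natural since both sides compute (minus) the virtual tangent bundle of the common composite $p \circ i = q \circ i' : Y \to X$, but tracing it precisely through the diagram requires patience.
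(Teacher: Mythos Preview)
Your proof is correct and follows essentially the same route as the paper: introduce the fiber product $\PP^n_X\times_X\PP^m_X$ with its diagonal map $j=(i,i')$, reduce to showing $j^*\circ\pi_1^*=i^*$ (and symmetrically for $i'$), and establish this via the factorization through $\PP^m_Y$ using Lemma~\ref{GysinLem5.9}, Theorem~\ref{GysinFunctoriality}, and Lemma~\ref{GysinLem5.10}. One small slip: you write that $\tilde i$ is the pullback of $i$ along $\pi_2$, but your own cartesian square (correctly) shows it is the pullback along $\pi_1$.
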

\begin{proof} Let us introduce the following morphisms:
\begin{center}

$\xymatrix{
  &   &   \PP_X^n \ar[rd]^p&    \\
  Y \ar@/^/[rru]^i \ar[r]|-\nu \ar@/_/[rrd]_{i'} & \PP^n_X\times_X \PP^m_X \ar[ru]_{q'} \ar[rd]^{p'} & & X \\
   &   &   \PP^m_X \ar[ru]_q &   
   }$
\end{center}
Applying Proposition \ref{Prop4.1}.2, we are reduced to prove $i^*=\nu^*q'^*$ and ${i'}^*=\nu^*p'^*$. In other words, we are reduced to the case $m=0$ and $q=\Id_X$.
\par In this case, we introduce the following morphisms:
\begin{center}

$\xymatrix{
Y  \ar@/^/^i[rrd]  \ar[rd]^s \ar@2{-}[rdd] &      &        \\
  & \PP^n_Y \ar[r]_l \ar[d]^q & \PP^n_X  \ar[d]_p \\
  & Y     \ar[r]^{i'}  &  X.    
  }$
\end{center}
Then the lemma follows from Lemma \ref{GysinLem5.9}, Lemma \ref{GysinLem5.10} and Theorem \ref{Thm13.1}.
\end{proof}

Let $X$ and $Y$ be two schemes and let $f:Y\to X$ be a projective lci morphism. Consider an arbitrary factorization 
$\xymatrix{ Y \ar[r]^i & \PP^n_X \ar[r]^p & X}$ of $f$ into a regular closed immersion followed by the canonical projection. By the preceding lemma, the composition $i^*p^*$ does not depend on the chosen factorization.

\par 

We recall the definition of $\TT_f$, the virtual tangent bundle of $f$ (see \cite[§3]{Fasel18bis}; see also \cite[B.7.6]{Fult}). Consider a factorization $\xymatrix{ Y \ar[r]^i & \PP^n_X \ar[r]^p & X}$ as before and define $\TT_{pi}=i^*\TT_{\PP^n_X/X} - \NN_Y(\PP^n_X) $ the relative (virtual) bundle with respect to the factorization $f=pi$. Consider next a commutative diagram
\begin{center}

$\xymatrix{
 & \PP^n_X \ar[dd]^h\ar[rd]^p & \\
 Y \ar[ru]^i \ar[rd]_{i'} & & X \\
  & \PP^m_X \ar[ru]_q & 
  }$
\end{center}
where $pi=qi'=f$ are two factorizations of $f$ and where $h$ is smooth. By taking the pullback, we can construct a third factorization $f=\pi(i\times i')$ such that we have canonical isomorphisms of virtual vector bundles
\begin{center}

$\xymatrix{
 & \TT_{\pi(i\times i')} \ar[ld] \ar[rd] & \\
 \TT_{pi} & & \TT_{qi'}.
 }$
\end{center}
Hence we can define $\TT_f$ as the limit (over all factorizations) of $\TT_{pi}$.
\begin{Def}
Keeping the previous notations, we define the Gysin morphism associated to $f$ as the morphism 
\begin{center}

$f^*=\Theta \circ i^*\circ p^*:A^*(X,M,\VV_X)\to A^*(Y,M,-\TT_{f}+\VV_Y)$
\end{center}
where $\Theta$ is the canonical isomorphism coming from $\TT_f\simeq \TT_{pi}$.
\end{Def}

\begin{Pro}

Consider projective morphisms $\xymatrix{Z \ar[r]^g & Y \ar[r]^f & X}$.
Then (up to the canonical isomorphim induced by $\TT_{fg}\simeq \TT_g + g^*\TT_f$):
\begin{center}
$g^*\circ f^*=(f\circ g)^*$.

\end{center} 
\end{Pro}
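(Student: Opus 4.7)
The plan is to reduce the statement to the functoriality already established for regular closed immersions (Theorem \ref{Thm13.1}) and for essentially smooth pullbacks (Proposition \ref{Prop4.1}.2), bridging the two via the base change compatibility of Lemma \ref{GysinLem5.9}.

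First I would fix factorizations $f = p \circ i$ with $i:Y \hookrightarrow \PP^n_X$ a regular closed immersion and $p:\PP^n_X \to X$ the canonical projection, and $g = q \circ j$ analogously, and form the cartesian square
\[\xymatrix{
\PP^m_Y \ar[r]^-{\tilde i} \ar[d]_q & \PP^m_{\PP^n_X} \ar[d]^{\tilde p} \\
Y \ar[r]^-i & \PP^n_X
}\]
in which $\tilde i$ is the base change of $i$ (a regular closed immersion) and $\tilde p$ is the projection of the projective bundle. Lemma \ref{GysinLem5.9} yields $q^* \circ i^* = \tilde i^* \circ \tilde p^*$; combined with Theorem \ref{Thm13.1} and Proposition \ref{Prop4.1}.2, this rearranges as
\[ g^* \circ f^* = j^* \circ \tilde i^* \circ \tilde p^* \circ p^* = (\tilde i \circ j)^* \circ (p \circ \tilde p)^*, \]
where $(p \circ \tilde p)^*$ is understood as the essentially smooth pullback.

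Next I would introduce the Segre embedding $\sigma:\PP^m_{\PP^n_X} \hookrightarrow \PP^N_X$ (with $N=(n+1)(m+1)-1$), a regular closed immersion satisfying $P \circ \sigma = p \circ \tilde p$ for $P:\PP^N_X \to X$ the projection. Then $\sigma \circ \tilde i \circ j:Z \hookrightarrow \PP^N_X$ is a regular closed immersion and $fg = P \circ \sigma \circ \tilde i \circ j$ is an admissible factorization of $fg$ of the form required by the definition. Theorem \ref{Thm13.1} gives
\[ (fg)^* = (\sigma \circ \tilde i \circ j)^* \circ P^* = (\tilde i \circ j)^* \circ \sigma^* \circ P^*, \]
so the desired equality reduces to the identity $\sigma^* \circ P^* = \tilde p^* \circ p^*$, i.e.\ that for the smooth projective morphism $\pi = P \circ \sigma = p \circ \tilde p$, the lci Gysin pullback coincides with the essentially smooth pullback.

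The main obstacle is exactly this last statement, which I would formulate as the following key sub-claim: \emph{if $h:W \to X$ is smooth projective with factorization $h = P \circ k$ through a regular closed immersion $k:W \hookrightarrow \PP^N_X$, then $k^* \circ P^* = h^*$ as morphisms on Chow--Witt groups} (the right-hand side being the essentially smooth pullback). To prove it I would form the base change diagram
\[\xymatrix{
\PP^N_W \ar[r]^-K \ar[d]_{\pi_W} & \PP^N_X \ar[d]^P \\
W \ar[r]^-h & X
}\]
and consider the section $s:W \to \PP^N_W$, $w \mapsto (w,k(w))$, of $\pi_W$; it is a regular closed immersion satisfying $K \circ s = k$. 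Lemma \ref{GysinLem5.10} gives $s^* \circ \pi_W^* = \Id$, while Proposition \ref{Prop4.1}.2 applied to $h \circ \pi_W = P \circ K$ yields $\pi_W^* \circ h^* = K^* \circ P^*$; hence $s^* \circ K^* \circ P^* = h^*$. The sub-claim therefore reduces to the compatibility $s^* \circ K^* = k^*$, expressing that for a regular closed immersion written as (regular closed immersion)$\,\circ\,$(essentially smooth), the Gysin pullback factors accordingly. This compatibility is proved by applying Lemma \ref{Lem11.3} to the deformation space of $k$ in $\PP^N_X$, identifying its pullback along $K$ with the deformation space of $s$ in $\PP^N_W$ and using the canonical isomorphism $\NN_k \simeq \NN_s - s^*\TT_K$ coming from the normal bundle sequence for the composition $k = K \circ s$. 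The remaining verification amounts to checking the coherence of all the canonical isomorphisms of virtual vector bundles (in particular, that the resulting identification matches $\TT_{fg} \simeq \TT_g + g^*\TT_f$), which is routine and follows the pattern of \cite[§5]{Deg08n2}.
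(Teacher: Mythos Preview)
Your approach via the Segre embedding is workable in principle, but the final step---the identity $s^*\circ K^* = k^*$---has a real gap. You claim that pulling back the deformation space of $k:W\hookrightarrow \PP^N_X$ along $K$ yields the deformation space of $s:W\hookrightarrow \PP^N_W$, so that Lemma~\ref{Lem11.3} applies directly. This is not so: since $\PP^N_W = W\times_X \PP^N_X$ with $K$ the second projection, one has $K^{-1}(k(W)) \cong W\times_X W$, and $s(W)$ is only its diagonal. Lemma~\ref{Lem11.3} therefore compares $k^*$ with the Gysin map of the closed immersion $W\times_X W\hookrightarrow \PP^N_W$, not with $s^*$. Factoring $s$ through the diagonal $\Delta:W\to W\times_X W$ and applying Theorem~\ref{Thm13.1}, the missing piece becomes $\Delta^*\circ(\operatorname{pr}_1)^* = \Id$ for the smooth projection $\operatorname{pr}_1:W\times_X W\to W$ and its diagonal section---which is again an instance of your sub-claim (now for the smooth projective morphism $\operatorname{pr}_1$), so the reduction is circular.

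The paper sidesteps this by different bookkeeping: instead of factoring $g$ through $\PP^m_Y$, it fixes a factorization of the \emph{composite} $fg = q\circ j$ through $\PP^m_X$. The pair $(g,j)$ then induces a regular closed immersion $k:Z\hookrightarrow \PP^m_Y$, and via the base change $i':\PP^m_Y\hookrightarrow \PP^n_X\times_X\PP^m_X = \PP^n_{\PP^m_X}$ of $i$ one obtains two factorizations of the regular closed immersion $j$ through projective bundles over $\PP^m_X$, namely $j=\Id\circ j$ and $j = p'\circ(i'\circ k)$. Lemma~\ref{GysinLem5.11} gives $(i'\circ k)^*\circ p'^* = j^*$ directly; combined with Lemma~\ref{GysinLem5.9}, Theorem~\ref{Thm13.1} and Proposition~\ref{Prop4.1}.2 this yields the result without the Segre embedding and without ever comparing the lci Gysin map of a general smooth projective morphism with its essentially smooth pullback.
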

\begin{proof} (see also \cite[Proposition 5.14]{Deg08n2} for the classical oriented case)

We choose a factorization $\xymatrix{Y \ar[r]^i & \PP^n_X \ar[r]^p & X}$ 
(resp. $\xymatrix{Z\ar[r]^j & \PP^m_X \ar[r]^q & X}$) of $f$ (resp. $fg$) and we introduce the diagram
\begin{center}

$\xymatrix{
    &     &    \PP^m_X  \ar@/^2pc/	[rrddd]^q &    &   \\
    &      &   \PP^n_X\times_X \PP^m_X \ar[u]|-{p'} \ar[rd]^{q'} &   &   \\
    &  \PP^m_Y \ar[rd]^{q''} \ar[ru]^{i'}&                 &   \PP^n_X \ar[rd]|-{p}&   \\
    Z \ar[rr]|-{g} \ar[ru]|-{k}  \ar@/^2pc/[rruuu]^j &  &      Y   \ar[ru]^i      \ar[rr]|-{f}    &        &  X
    }$ 
\end{center}
in which $p'$ is deduced from $p$ by base change, and so on for $q'$ and $q''$. Then, by using the factorization given in the preceding diagram, the proposition follows from \ref{GysinLem5.9}, \ref{Thm13.1}, \ref{GysinLem5.11} and \ref{Prop4.1}.2.
\end{proof}

Now consider a cartesian square of schemes
\begin{center}

$\xymatrix{
X' \ar[r]^{f'} \ar[d]_{g'} & Y' \ar[d]^g \\
X \ar[r]_f & Y
}$
\end{center}
with $f$ proper 
 and $g$ lci.
Suppose morever that the square is {\em tor-independent}, that is for any $x\in X$, $y'\in Y'$ with $y=f(x)=g(y')$ and for any $i>0$ we have
\begin{center}

$\operatorname{Tor}^{\mathcal{O}_{Y,y}}_i(\mathcal{O}_{X,x},\mathcal{O}_{Y',y'})=0$.
\end{center}
From the fact that cotangent complex is stable under derived base change,
it follows that there is a canonical isomorphism $f^{'*}\TT_g \simeq \TT_{g'}$ so that the following proposition makes sense.

\begin{Pro}[Base change for lci morphisms]\label{BaseChangelci} Keeping the previous assumptions, we have (up to the canonical isomorphism induced by the previous isomorphism):
\begin{center}
$f'_* \circ g'^* = g^*\circ f_*$.
\end{center}
\end{Pro}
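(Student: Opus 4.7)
The plan is to reduce the statement to the two special cases already handled in the paper: base change for smooth morphisms (Proposition \ref{Prop4.1}.3) and base change for regular closed immersions (Proposition \ref{BaseChangeRegular}), glued by the factorization used to define the Gysin morphism for an lci projective map. Choose a factorization $g = p\circ i$ with $i\colon Y'\to \PP^n_Y$ a regular closed immersion and $p\colon \PP^n_Y\to Y$ the canonical projection. Form the fiber product $\PP^n_X = X\times_Y \PP^n_Y$, obtaining the commutative diagram
\begin{center}
$\xymatrix{
X' \ar[r]^{i'} \ar[d]_{g'} & \PP^n_X \ar[r]^{\tilde p} \ar[d]_{\tilde f} & X \ar[d]^{f} \\
Y' \ar[r]_-{i} & \PP^n_Y \ar[r]_-{p} & Y
}$
\end{center}
in which both inner squares are cartesian, $\tilde p$ is smooth, $\tilde f$ is proper (being a base change of $f$), and $i'$ is the base change of $i$ along $\tilde f$. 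The tor-independence assumption on the outer square propagates to the right-hand inner square (since $\tilde p$ is flat) and then to the left-hand inner square; this is exactly what guarantees that $i'$ is again a regular closed immersion and that the canonical map $\NN_{X'}\PP^n_X \to (i')^*(\tilde f^*\NN_{Y'}\PP^n_Y)$ is an isomorphism of virtual vector bundles, which is precisely the hypothesis needed to apply Proposition \ref{BaseChangeRegular}.

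With this setup, I would compute:
\begin{align*}
g^*\circ f_* &= i^*\circ p^*\circ f_* \\
 &= i^*\circ \tilde f_*\circ \tilde p^* \\
 &= f'_*\circ (i')^*\circ \tilde p^* \\
 &= f'_*\circ g'^*,
\end{align*}
where the first and last equalities are the definitions of $g^*$ and $g'^*$ (applied to the factorization $g = p\circ i$ and its base change $g' = \tilde p\circ i'$, these being compatible factorizations so that all virtual tangent bundles match up canonically), the second equality is Proposition \ref{Prop4.1}.3 applied to the right-hand cartesian square (valid since $\tilde p$ is smooth and $f$ is proper), and the third equality is Proposition \ref{BaseChangeRegular} applied to the left-hand cartesian square (valid since $i, i'$ are regular closed immersions, $\tilde f, f'$ are proper, and the normal bundle identification holds by tor-independence).

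The main obstacle will be the bookkeeping of canonical isomorphisms between virtual bundles: one must verify that the identification $\tilde f^*\TT_g \simeq \TT_{g'}$ promised in the statement is compatible with the isomorphisms $\NN_{X'}\PP^n_X \simeq (i')^*\tilde f^*\NN_{Y'}\PP^n_Y$ and $\tilde p^*\TT_{\PP^n_X/X} \simeq (\tilde p')^*\TT_{\PP^n_Y/Y}$ coming into the definitions of $g^*$ and $g'^*$, so that the canonical isomorphisms $\Theta$ introduced at each step genuinely compose to the one asserted in the proposition. Once this diagram-chase is carried out (which is routine, since all identifications come from standard exact sequences for cotangent complexes under tor-independent base change), the three-line computation above finishes the proof. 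A secondary point to check is the independence of the whole argument from the chosen factorization $g = p\circ i$, but this is already guaranteed by Lemma \ref{GysinLem5.11} (and its base change), so nothing new is required.
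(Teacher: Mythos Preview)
Your proposal is correct and follows essentially the same approach as the paper's (very terse) proof, which simply reduces to the two cases $g$ a projective bundle projection (Proposition \ref{Prop4.1}.3) and $g$ a regular closed immersion (Proposition \ref{BaseChangeRegular}); you spell out the factorization and chaining explicitly. One small slip: in your displayed diagram the left vertical arrow should be labeled $f'$ rather than $g'$ (the computation that follows already uses the correct labeling).
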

\begin{proof}

It suffices to consider the case where $g$ is the projection of a projective bundle or a regular closed immersion. It follows from Proposition \ref{Prop4.1} in the first case and from Proposition \ref{BaseChangeRegular} in the second.
\end{proof}

\section{Products} \label{ProductSection}

\begin{Par}{\sc Cross products} Let $M\times M' \to M''$ be a pairing of MW-cycle modules over $k$. Let $Y$ and $Z$ be two essentially smooth schemes over $k$ equipped with virtual vector bundles $\VV_Y$ and $\mathcal{W}_Z$. We define the cross product
\begin{center}

$\times : C_p(Y,M,\VV_Y)\times C_q(Z,M',\mathcal{W}_Z)\to C_{p+q}(Y\times Z,M'',\VV_{Y\times Z}+\mathcal{W}_{Y\times Z})$
\end{center}
as follows. For $y\in Y$, let $Z_y=\Spec \kappa(y) \times Z$, let $\pi_y:Z_y\to Z$ be the projection and let $i_y:Z_y\to Y\times Z$ be the inclusion. For $z\in Z$ we understand similarly $Y_z,\pi_z:Y_z\to Y$ and $i_z:Y_z\to Y\times Z$. We give the following two equivalent definitions:
\begin{center}
$\rho\times \mu = \displaystyle \sum_{y\in Y_{(p)}} (i_y)_*(\rho_y\cdot \pi_y^*(\mu))$,
\\ $\rho\times \mu = \displaystyle \sum_{z\in Z_{(q)}} (i_z)_*(\pi^*_z(\rho)\cdot \mu)$.
\end{center}
We give more details on this definition. The map 
\begin{center}
${(i_y)_*: C_q(Z_y,M,\VV_{Z_y}+\mathcal{W}_{Z_y})\to C_{p+q}(Y\times Z,M, \VV_{Y\times Z} +\mathcal{W}_{Y\times Z})}$ 

\end{center}is by definition the inclusion corresponding to $Z_{y(q)} \subset (Y\times Z)_{(p+q)}$. Let $y\in Y$ and denote by $\rho_y$ the $y$-component of $\rho$ in $M(\kappa(y), \Omega_{\kappa(y)/k}+\VV_y)$. Write $\pi^*_y(\mu)=\sum_{u\in Z_{y(q)}} (\pi^*_y(\mu))_u$. By definition, 
\begin{center}
$\rho_y\cdot \pi^*_y(\mu)=\displaystyle \sum_{u\in Z_{y(q)}}\Theta_u(\res_{\kappa(u)/\kappa(y)}(\rho_y)\cdot (\pi^*_y(\mu))_u)$,

\end{center}
where $\Theta_u$ is the canonical isomorphism induced by ${\TT_{Z_y/Z}\times \Spec \kappa(u) \simeq \TT_{\kappa(y)/k}\times \Spec \kappa(u)}$. 
\par To check equality of the two definitions, consider the $u$-component for $u\in Y\times Z$. Let $y,z$ be the images of $u$ under the projections $Y\times Z\to Y,Z$. The $u$-components are either trivial or given by
\begin{center}

$(\rho\times \mu)_u=\res_{\kappa(u)/\kappa(y)}(\rho_y)\cdot \res_{\kappa(u)/\kappa(z)}(\mu_z)$.
\end{center}
\end{Par}
\begin{Par}

{\sc Associativity}
 Consider four pairings 
\begin{center}

$M\times M'\to N$,
\\ $M'\times M''\to N'$,
\\ $N\times M''\to N''$,
\\ $M\times N'\to N''$.
\end{center}
For $X$ another scheme and $\nu \in C_r(X,M,\VV_X)$, we have the identity
\begin{center}

$\nu \times ( \rho \times \mu)=(\nu \times \rho) \times \mu$.
\end{center}
Indeed, consider the $u$-component for $u\in X\times Y \times Z$. Let $x,y,z$ be the images of $u$ in $X,Y,Z$, respectively. The $u$-component is either trivial or given by
\begin{center}

$(\nu \times \rho \times \mu)_u=\res_{\kappa(u)/\kappa(x)}(\nu_x)\cdot \res_{\kappa(u)/\kappa(y)}(\rho_y) \cdot \res_{\kappa(u)/\kappa(z)}(\mu_z)$
\end{center}
where the right hand of the equality makes sense if we assume the pairings to be compatible in some obvious sense.

\end{Par}

\begin{Par}{\sc Commutativity}
Let $M$ be a MW-cycle module with a ring structure. Let $\tau:Y\times Z \to Z \times Y$ be the interchange of factors. For $\rho\in C_p(Y,M,\VV_Y)$ and $\mu\in C_q(Z,M,\mathcal{W}_Z)$ one has
\begin{center}

$\tau_*(\rho \times \mu)=(-1)^{mn}\Theta(\mu\times \rho) \in C_{p+q}(Z\times Y,M,\VV_{Z\times Y} + \mathcal{W}_{Z\times Y})$
\end{center}
where $m$ and $n$ are the rank of $\VV_Y$ and $\mathcal{W}_Z$ (respectively) and where $\Theta$ is induced by the switch isomorphism $\mathcal{W}_{Z\times Y}+\VV_{Z\times Y} \simeq \VV_{Z\times Y} + \mathcal{W}_{Z\times Y}$.
\par This is immediate from the definitions.
\end{Par}

\begin{Par}

{\sc Chain rule} For $\rho \in C_p(Y,M,\VV_Y)$ and $\mu\in C_q(Z,M,\mathcal{W}_Z)$ one has
\begin{center}

$d(\rho \times \mu)=d(\rho)\times \mu + \epsilon^{n}\rho \times d(\mu)$
\end{center}
where $n=\rk \VV_Y$.
The proof comes from \ref{itm:R3a}, \ref{itm:R3c}, \ref{itm:R3d} and \ref{itm:P3} (see also \cite[§14.4]{Rost96} for more details).
\end{Par}

\begin{Par}

{\sc Intersection} For $X$ smooth, the product induces a map
\begin{center}

$A^p(X,M,\VV_X)\times A^q(X,M,\mathcal{W}_X)\to A^{p+q}(X\times X,M,\VV_{X\times X}+\mathcal{W}_{X\times X})$.
\end{center}

By composing with the Gysin morphism 
\begin{center}

$\Delta^*:A^{p+q}(X\times X,M,\VV_{X\times X}+\mathcal{W}_{X\times X})\to A^{p+q}(X,M,-\TT_{\Delta}+\VV_X+\mathcal{W}_X)$
\end{center}
induced by the diagonal $\Delta:X\to X\times X$, we obtain the map
\begin{center}
$A^p(X,M,\VV_X)\times A^q(X,M,\mathcal{W}_X)\to A^{p+q}(X,M,-\TT_{\Delta}+\VV_X+\mathcal{W}_X)$.

\end{center}

\end{Par}
The preceding considerations and the functoriality of the Gysin maps prove the following theorem.
\begin{The}

If $M$ is a MW-cycle module with a ring structure and $X$ a smooth scheme over $k$, the pairing turns $\bigoplus_{\VV_X\in \mathfrak{V}(X)}A^*(X,M,\VV_X)$ into a graded commutative associative algebra over $\bigoplus_{\VV_X\in \mathfrak{V}(X)}A^*(X,\KMW,\VV_X)$ (see also \ref{RingStructure} for another formulation).
\end{The}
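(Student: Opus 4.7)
The plan is to deduce the theorem by assembling the four items established in the preceding subsections (cross product, associativity, commutativity, chain rule) together with the functoriality of Gysin morphisms from Section~\ref{GysinMorphisms}.

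First, I would verify that the intersection product is well-defined on cohomology. The chain rule $d(\rho\times\mu)=d(\rho)\times\mu+\epsilon^{\rk\VV_X}\,\rho\times d(\mu)$ shows that the cross product sends pairs of cycles to cycles and carries (boundary, cycle) to boundaries, hence descends to a pairing $A^*(Y,M,-)\times A^*(Z,M,-)\to A^*(Y\times Z,M,-)$ on cohomology for smooth $Y,Z$. Since $X$ is smooth, the diagonal $\Delta:X\to X\times X$ is a regular closed immersion, so the Gysin map $\Delta^*$ constructed in Section~\ref{GysinMorphisms} is defined on cohomology; composing gives the intersection product $\alpha\cdot\beta:=\Delta^*(\alpha\times\beta)$.

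Next, associativity and graded commutativity would follow from the corresponding properties of the cross product. For associativity, I would factor the triple diagonal as $\Delta_3 = (\Delta\times\id_X)\circ\Delta = (\id_X\times\Delta)\circ\Delta$ and invoke functoriality of Gysin morphisms (Theorem~\ref{GysinFunctoriality}) together with base change (Proposition~\ref{BaseChangelci}) applied to the cartesian squares expressing $\alpha\cdot\beta$ and $\beta\cdot\gamma$, to identify both iterated products with $\Delta_3^*(\alpha\times\beta\times\gamma)$ up to canonical twist identifications. For commutativity, the key input is $\tau\circ\Delta=\Delta$ where $\tau:X\times X\to X\times X$ is the switch of factors, so that $\Delta^*=\Delta^*\circ\tau^*$; combined with the cross-product identity $\tau_*(\rho\times\mu)=(-1)^{mn}\Theta(\mu\times\rho)$, this yields $\alpha\cdot\beta=(-1)^{mn}\Theta'(\beta\cdot\alpha)$ for ranks $m,n$ of the twisting virtual bundles, which is precisely the $\epsilon$-commutativity in the sense of \ref{RingStructure}.

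Finally, applying exactly the same construction to the structure pairing $\KMW\times M\to M$ (which makes $M$ a $\KMW$-module by data \ref{itm:D3} and whose cohomological pairing satisfies the analogous four properties) produces the module action of $\bigoplus_{\VV_X}A^*(X,\KMW,\VV_X)$ on $\bigoplus_{\VV_X}A^*(X,M,\VV_X)$; the special case $M=\KMW$ endows the former with a ring structure, and compatibility of the two pairings gives the algebra axiom. The main obstacle throughout is the bookkeeping of twists: each pullback $\Delta^*$ shifts the virtual bundle grading by $-\TT_{\Delta}$, and one must carefully track the canonical isomorphisms (normal bundle identifications from Proposition~\ref{BaseChangeRegular}, switch isomorphisms on sums of virtual bundles, and the identification $-\TT_\Delta\simeq \Om_{X/k}$) so that associativity and $\epsilon$-commutativity hold up to the intended canonical identifications rather than merely up to an unspecified ambiguity.
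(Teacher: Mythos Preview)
Your proposal is correct and follows essentially the same approach as the paper, which dispatches the theorem in a single sentence (``The preceding considerations and the functoriality of the Gysin maps prove the following theorem''). You have simply spelled out in detail what the paper leaves implicit: that the chain rule makes the cross product descend to cohomology, that associativity of the intersection product reduces to associativity of the cross product via the factorizations of the triple diagonal and functoriality of Gysin morphisms, and that $\epsilon$-commutativity follows from $\tau\circ\Delta=\Delta$ together with the commutativity of the cross product.
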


In particular, we obtain a product on $\CHt(X)$ which coincides with the intersection product (defined in \cite[§3.4]{Fasel18bis}, see also \cite{Fasel13}). Indeed, our construction of Gysin morphisms follows the classical one (using deformation to the normal cone) and our cross products correspond to the one already defined for the Milnor-Witt K-theory (see \cite[§3]{Fasel18bis}).

\section{Adjunction between MW-Cycle Modules and Rost Cycle Modules} \label{Adjunction}

\begin{Par}

Denote by $\mathbf{F}_k$ the category whose objects are couple $(E,n)$ where $E$ is a (finitely generated) field over $k$ and $n$ an integer, and where a morphism $(E,m)\to (F,n)$ is the data of a field extension $E\to F$ and the identity $m=n$.
\par  Consider $M$ a classical cycle module (\textit{à la Rost}, see \cite[§1]{Rost96}). By definition, $M$ is a functor from $\mathbf{F}_k$ to the category of abelian groups with some data $(d1),\dots, (d4)$ and rules $(r1a),\dots, (r3e), (fd)$ and $(c)$ (we use small letters to designate Rost's axioms so that $(d1)$ corresponds to \ref{itm:D1}, etc). Equivalently, $M$ is a functor from the category of finitely generated fields to the category of {\em $\ZZ$-graded} abelian groups satisfying the same conditions.
\par We define a Milnor-Witt cycle module $\Gamma_*(M)$ as follows. Let $(E,\VV_E)$ be in $\mathfrak{F}_k$ and put
\begin{center}

$\Gamma_*(M)(E,\VV_E)=M(E,\rk \VV_E)$.
\end{center}
The data \ref{itm:D1},\dots, \ref{itm:D4} are defined in an obvious way (for \ref{itm:D2} notice that $\rk \Omega_{F/k}=\rk \Om_{E/k}$ if $F/E$ is finite ; for \ref{itm:D3} the generator $\eeta$ acts trivially).
\par Moreover, we can check that this defines a fully faithful exact functor
\begin{center}

$\Gamma_*:\mathfrak{M}^{\operatorname{M}}_k\to \mathfrak{M}^{\operatorname{MW}}_k$.
\end{center}
where $\mathfrak{M}^{\operatorname{M}}_k$ (resp. $\mathfrak{M}^{\operatorname{MW}}_k$) is the category of Rost cycle modules (resp. Milnor-Witt cycle modules). 
\begin{Rem}

 Let $\VV_X$ be a virtual vector bundle of rank $n$ over a scheme $X$ and let $p$ be an integer. With the notations of Section \ref{MWComplexe} and the one in \cite[§5]{Rost96}, we have
\begin{center}

$C_p(X,\Gamma_*(M),\VV_X)=C_p(X,M, n)$.
\end{center}
\end{Rem}
\end{Par}

In order to define the adjoint functor, we need to following lemma.

\begin{Lem}\label{LemAdjunction1} 

 Let $(E,\VV_E)$ be in $\mathfrak{F}_k$ and let $\Theta:\VV_E \to n\cdot \AAA^1_E$, $\Theta':\VV_E \to n\cdot \AAA^1_E$ be two trivializations. The induced isomorphisms
 \begin{center}
 
 $\Theta_*:M(E,\VV_E)\to M(E,n\cdot \AAA^1_E)$, \\
 $\Theta'_*:M(E,\VV_E)\to M(E,n\cdot \AAA^1_E)$ 
 \end{center}
 are related as follows:
 \begin{center}
 
 $(\Theta - \Theta')_*(M(E,\VV_E)) \subset \eeta[u]\cdot M(E, n\cdot \AAA^1_E)$,
 \end{center}
 for some $u\in E^\times$. Hence, they are equal modulo $\eeta$.
 
\end{Lem}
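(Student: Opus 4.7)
The plan is to reduce the statement to rule \ref{itm:R4a} applied to the composition $\phi = \Theta' \circ \Theta^{-1}$, which is an automorphism of the virtual vector space $n \cdot \AAA^1_E$. By functoriality of $M$ (data \ref{itm:D1} together with rule \ref{itm:R1a}), the decomposition $\Theta' = \phi \circ \Theta$ gives
\begin{center}
$\Theta'_* = \phi_* \circ \Theta_* : M(E, \VV_E) \to M(E, n \cdot \AAA^1_E).$
\end{center}

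Next, I would apply rule \ref{itm:R4a} to the automorphism $\phi$: this says exactly that $\phi_* = \gamma_{\Delta(\phi)}$, where $\Delta(\phi) = \langle \delta_\phi \rangle$ for the unit $\delta_\phi \in E^\times$ corresponding (via \ref{RemarkOnR4a}) to the induced automorphism $\det(\phi)$ of the one-dimensional determinant line. Combining the two, I obtain the clean formula $\Theta'_* = \gamma_{\langle \delta_\phi \rangle} \circ \Theta_*$.

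The final step is to expand $\langle \delta_\phi \rangle$ using the Milnor-Witt identity $\langle a \rangle = 1 + \eeta[a]$ from the notation \ref{NotationsKMW}. By bilinearity of the $\KMW$-module structure on $M$, this yields
\begin{center}
$\Theta'_*(x) - \Theta_*(x) = \gamma_{\eeta[\delta_\phi]}(\Theta_*(x)) = \eeta[\delta_\phi] \cdot \Theta_*(x)$
\end{center}
for every $x \in M(E, \VV_E)$. Since $\Theta_*$ is bijective, the image of $\Theta_* - \Theta'_*$ is contained in $\eeta[\delta_\phi] \cdot M(E, n \cdot \AAA^1_E)$, which gives the stated inclusion with $u = \delta_\phi$. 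The second assertion is then immediate, as $\eeta[\delta_\phi]$ is visibly divisible by $\eeta$.

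There is no serious obstacle here. The only interpretive point worth flagging is the notation $(\Theta - \Theta')_*$: one cannot literally subtract two isomorphisms of virtual vector spaces, so this symbol must be read as the difference $\Theta_* - \Theta'_*$ of the induced group homomorphisms. Everything else is a direct unwinding of the axioms, with rule \ref{itm:R4a} doing all the real work.
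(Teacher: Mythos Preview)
Your proof is correct and follows essentially the same approach as the paper: factor through the automorphism $\Theta'\circ\Theta^{-1}$ (the paper uses $\Theta\circ\Theta'^{-1}$, which is immaterial), apply \ref{itm:R4a} to identify it with $\gamma_{\langle u\rangle}$, and then use $\langle u\rangle - 1 = \eeta[u]$. Your remark on the interpretation of $(\Theta-\Theta')_*$ as the difference of the induced homomorphisms is exactly how the paper treats it as well.
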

\begin{proof} By \ref{itm:R1a}, write $(\Theta-\Theta')_*=((\Theta\circ \Theta'^{-1})_*-\Id)\circ \Theta'_*$. The rule \ref{itm:R4a} gives an element $u\in E^\times$ such that 
\begin{center}

$(\Theta\circ \Theta'^{-1})_*=\gamma_{\langle u \rangle}$.
\end{center}
Since $\eeta[u]=\langle u \rangle - 1$, the result follows.

\end{proof}
\begin{Par}

Let $M$ be a Milnor-Witt cycle module. We want to define a cycle module \textit{à la Rost} $\Gamma^*(M)$. If $E$ is a field over $k$ and $n$ an integer, denote by $\mathcal{M}_n(E)=M(E,n\cdot \AAA^1_E)$ and consider the graded group
\begin{center}

$\mathcal{M}(E)=\displaystyle \bigoplus_{n\in \ZZ} \mathcal{M}_n(E)$.
\end{center}
This is in fact a module over the ring $\kMW_*(E)$. Now consider $\mathcal{I}(E)=\mathcal{M}(E)\eeta $ the left sub-$\kMW_*(E)$-module generated by $\eeta$. Define $\Gamma^*(M)$ as follows:
\begin{center}

$\Gamma^*(M)(E)=\mathcal{M}(E)/\mathcal{I}(E)$.
\end{center}

This is a $\ZZ$-graded abelian group.
\begin{itemize}
\item (d1) Let $\phi:E\to F$ be a morphism of fields. The collection of maps
\begin{center}

$\mathcal{M}_n(E)\to \mathcal{M}_n(F)$
\end{center}
given by \ref{itm:D1} defines a map 
\begin{center}

$\phi_*:\Gamma^*(M)(E)\to \Gamma^*(M)(F)$
\end{center}
of degree 0.

\item (d2) Let $\phi:E\to F$ be a finite morphism of fields. Denote by $r=\rk \Om_{F/k}=\rk \Om_{E/k}$. For $n\in \ZZ$, We define a map $\phi_\Theta^*$ as in the following commutative diagram.

\begin{center}

$\xymatrix{
\mathcal{M}_n(F) \ar[r]^{\phi_\Theta^*} \ar[d]_\Theta^{\simeq} & \mathcal{M}_n(E) \ar[d]^{\Theta'}_{\simeq} \\
M(F,\Omega_{F/k}+(n-r)\cdot \AAA^1_F) \ar[r]^{\phi^*} & M(E,\Omega_{E/k}+ (n-r)\cdot \AAA^1_E)
}$

\end{center}
where $\Theta$ and $\Theta'$ are isomorphisms induced by $\Om_{F/k}\simeq r\cdot \AAA^1_F$ and $\Om_{E/k}\simeq r\cdot \AAA^1_E$, respectively.
Hence (by \ref{itm:R2a} and \ref{itm:R2b}) we have a morphism of graded groups
\begin{center}

$\phi^*_{\Theta,\Theta'}:\Gamma^*(M)(F)\to \Gamma^*(M)(E)$
\end{center}
of degree 0. By Lemma \ref{LemAdjunction1}, this morphism does not depend on $\Theta, \Theta'$.
	
\item (d3) We have an obvious action of $\kMW_*(E)$ on $\Gamma^*(M)$ where $\eeta$ acts trivially.
\item (d4) Let $E$ be a field with a valuation $v$. Let $\Theta:\NN_v \to \AAA^1_{\kappa(v)}$ be an isomorphism of virtual $\kappa(v)$-vector spaces. For $n\in \ZZ$, this defines a map $\partial_\Theta$ as in the following commutative diagram.
\begin{center}

$\xymatrix{
\mathcal{M}_n(E) \ar[r]^{\partial_\Theta} \ar@{=}[d] & \mathcal{M}_{n-1}(\kappa(v))  \\
M(E,n\cdot \AAA^1_F) \ar[r]^-{\partial} & M(E, -\NN_v+n\cdot \AAA^1_E). \ar[u]^\simeq
}$

\end{center}
Hence (by \ref{itm:R3e}) we have a morphism of graded groups
\begin{center}

$\partial_\Theta:\Gamma^*(M)(E)\to \Gamma^*(M)(\kappa(v))$
\end{center}
of degree $(-1)$. By Lemma \ref{LemAdjunction1}, this morphism does not depend on $\Theta$.

\end{itemize}
\par Since $\epsilon$ modulo $\eeta$ is $(-1)$, we easily check that $\Gamma^*(M)$ satisfies all axioms of Rost's cycle modules, except for \ref{itm:R1c} which can be deduced from Claim \ref{StrongR1c}.
\par Moreover, we can check that this defines a functor
\begin{center}

$\Gamma^*:\mathfrak{M}^{\operatorname{MW}}_k\to \mathfrak{M}^{\operatorname{M}}_k$.
\end{center}

\end{Par}

We now prove the following adjunction theorem (hence our theory extends Rost's).
\begin{The}[Adjunction Theorem]\label{AdjunctionTheorem}

The two previously defined functors form an adjunction ${ \Gamma^*\dashv \Gamma_*}$ between the category of Milnor-Witt cycle modules and the category of classical cycle modules:
\begin{center}
$\Gamma^*:\mathfrak{M}^{\operatorname{MW}}_k \rightleftarrows \mathfrak{M}^{\operatorname{M}}_k:\Gamma_*$.

\end{center}
\end{The}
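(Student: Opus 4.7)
The plan is to establish the adjunction by producing, naturally in the MW-cycle module $M$ and the Rost cycle module $N$, mutually inverse bijections
$$\Phi : \Hom_{\CatMW_k}(M, \Gamma_*N) \rightleftarrows \Hom_{\CatM_k}(\Gamma^*M, N) : \Psi.$$
The key structural observation that makes both directions work is that on $\Gamma_*(N)$ the generator $\eeta$ acts by zero: $N$ carries no $\eeta$-action, and the $\KMW$-action on $\Gamma_*(N)$ is induced via the projection $\kMW_*(E) \twoheadrightarrow \kMW_*(E)/\eeta$.

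To build $\Phi$, given $\omega : M \to \Gamma_*(N)$, restrict to virtual bundles of the form $n\cdot \AAA^1_E$ to obtain maps $\omega_n : \mathcal{M}_n(E) \to N(E,n)$. Since $\omega$ commutes with $\gamma_{\eeta}$ by Definition \ref{DefMWmorphisms} and $\gamma_{\eeta}$ vanishes on $\Gamma_*(N)$, each $\omega_n$ kills $\mathcal{I}(E)$, hence factors through $\Gamma^*(M)(E)$. Compatibility of $\Phi(\omega)$ with Rost's data (d1)--(d4) then follows from the defining commutative squares of $\Gamma^*(M)$ together with the analogous compatibilities for $\omega$.

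To build $\Psi$, given $\tau : \Gamma^*(M) \to N$ and $(E, \VV_E) \in \mathfrak{F}_k$ with $n = \rk(\VV_E)$, pick any isomorphism $\Theta : \VV_E \simeq n\cdot\AAA^1_E$ of virtual $E$-vector spaces (existing since any line in $\mathfrak{V}(E)$ is isomorphic to $\AAA^1_E$) and set
$$\Psi(\tau)_{(E,\VV_E)} : M(E,\VV_E) \xrightarrow{\Theta_*} \mathcal{M}_n(E) \twoheadrightarrow \Gamma^*(M)(E)_n \xrightarrow{\tau_E} N(E,n) = \Gamma_*(N)(E,\VV_E).$$
By Lemma \ref{LemAdjunction1}, two choices of $\Theta$ differ by $\gamma_{\langle u\rangle} = \id + \gamma_{\eeta[u]}$, and the second summand factors through $\mathcal{I}(E)$, so $\Psi(\tau)$ does not depend on $\Theta$. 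Mutual inversion is then immediate: $\Phi(\Psi(\tau)) = \tau$ because for $\VV_E = n\cdot\AAA^1_E$ one may take $\Theta = \id$; and $\Psi(\Phi(\omega)) = \omega$ because the naturality of $\omega$ (data \ref{itm:D1}) combined with the triviality of $\Theta_*$ on $\Gamma_*(N)$ identifies the two composites.

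The main obstacle is verifying that $\Psi(\tau)$ is itself a morphism of MW-cycle modules, i.e., commutes with data \ref{itm:D1}--\ref{itm:D4}. Rule \ref{itm:D1} is formal. For \ref{itm:D2} one must match the chosen trivializations at $E$ and $F$ with one of $\Omega_{F/E}$; for \ref{itm:D3} one uses that the $\KMW$-module structures on $\Gamma^*(M)$ and $\Gamma_*(N)$ are defined compatibly; for \ref{itm:D4} one must simultaneously trivialize $\VV$ over $\mathcal{O}_v$ at the generic and the closed point while also fixing an isomorphism $\NN_v \simeq \AAA^1_{\kappa(v)}$. In each such comparison, two choices differ by multiplication by a quadratic form $\langle u\rangle$, whose $\eeta$-part (by \ref{itm:R4a}) vanishes in $\Gamma^*(M)$; rules \ref{itm:R3e} and \ref{itm:R4a} provide the precise commutation relations needed to push the resulting $\eeta$-discrepancies past residue and corestriction maps. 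Naturality of $\Phi$ and $\Psi$ in $M$ and $N$ is immediate from the construction.
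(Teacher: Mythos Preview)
Your proof is correct and follows essentially the same approach as the paper: both constructions build the bijection by trivializing virtual bundles via Lemma~\ref{LemAdjunction1} and factoring through the quotient by $\eeta$, with the main work being the verification that the map $\Hom_{\CatM_k}(\Gamma^*M,N)\to\Hom_{\CatMW_k}(M,\Gamma_*N)$ lands in morphisms of MW-cycle modules (checked against \ref{itm:D1}--\ref{itm:D4} using \ref{itm:R4a}). The only cosmetic difference is that your $\Phi$ and $\Psi$ are named in the opposite order from the paper's.
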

\begin{proof} Let $M$ be a Milnor-Witt cycle module and $N$ a Rost cycle module. We define a map
\begin{center}
$\Phi:\Hom_{\CatM_k}(\Gamma^*(M),N)\to \Hom_{\CatMW_k}(M,\Gamma_*(N))$.
\end{center}
Let $E$ be a field. By definition, $\Gamma^*(M)(E)$ is a $\ZZ$-graded abelian group; we write:
\begin{center}
${\Gamma^*(M)(E)=\bigoplus_{n\in \ZZ}\Gamma^*(M)(E,n)}$.
\end{center} 
We consider $\Gamma^*(M)$ as a functor $(E,n)\mapsto \Gamma^*(M)(E,n)$ with data $(d1),\dots, (d4)$ and rules $(r1a),\dots, (r3e), (fd)$ and $(c)$. Let $\alpha$ be a morphism from $\Gamma^*(M)$ to $N$. By definition, we have (for any field $E$ and any integer $n$) maps
\begin{center}

$\alpha_{(E,n)}:\Gamma^*(M)(E,n)\to N(E,n)$
\end{center}
compatible (in an obvious sense) with data $(d1),\dots, (d4)$ (see \cite[Definition 1.3]{Rost96}).
\par Let $E$ be a field and $\VV_E$ be a virtual vector bundle of rank $n$ over $E$. We want to define a map:
\begin{center}
$\Phi(\alpha)_{(E,\VV_E)}:M(E,\VV_E)\to \Gamma_*(N)(E,\VV_E)$.
\end{center}
Consider $\Theta:\VV_E\simeq n\cdot \AAA^1_E$ a trivialisation. We have a map $\Phi(\alpha)_E^{\Theta}$ defined as the composite:
\begin{center}
$\xymatrix{
M(E,\VV_E) \ar[r]^{\Theta}\ar[d]^{\Phi(\alpha)_{(E,\VV_E)}^{\Theta}} 
 & M(E,n\cdot \AAA^1_E)\ar[r]^-{\iota_n} 
&
 \bigoplus_{m\in \ZZ} M(E,m\cdot \AAA^1_E) \ar[d]^{\operatorname{mod} \eeta} \\
\Gamma_*(N)(E,\VV_E) &
  \Gamma^*(M)(E,n) \ar[l]^-{\alpha_{(E,n)}}  &
 \Gamma^*(M)(E) \ar[l]^{p_n}  }$
 \end{center}
where $\iota_n$ and $p_n$ are the canonical maps (recall that $\Gamma_*(N)(E,\VV_E)=N(E,n)$ by definition). By Lemma \ref{LemAdjunction1}, the map $\Phi(\alpha)_{(E,\VV_E)}^{\Theta}$ does not depend on $\Theta$ and is denoted by $\Phi(\alpha)_{(E,\VV_E)}$. In order to prove that $\Phi(\alpha):(E,\VV_E)\mapsto \Phi(\alpha)_{(E,\VV_E)}$ is a morphism of Milnor-Witt cycle modules, we have to check that it is compatible with $\ref{itm:D1}$, $\ref{itm:D2}$, $\ref{itm:D3}$ and $\ref{itm:D4}$.
\par We prove compatibility with $\ref{itm:D1}$.
 Let $\phi:E\to F$ be an extension of fields and $\VV_E$ a virtual vector bundle of rank $n$ over $E$. We want to prove that the following diagram (1) is commutative:
\begin{center}
$\xymatrix@C=10pt@R=20pt{
M(E,\VV_E) \ar[dd]^<<<<<<{\phi_*} \ar@{}[rddd]|{(1)}
\ar[rr]^>>>>>>>>>>{\Theta}\ar[dr]^{\Phi(\alpha)_{(E,\VV_E)}} &
  &
 M(E,n\cdot \AAA^1_E)\ar[rr]^>>>>>>>>>>{\iota_n} \ar[dd]^<<<<<<{\phi_*}|-{\phantom{f} } &
  &
 \bigoplus_{m\in \ZZ} M(E,m\cdot \AAA^1_E) \ar[dd]|-{\phantom{f}}^<<<<<<{\phi_*} \ar[rd]^{\operatorname{mod} \eeta} & 
\\
  &
\Gamma_*(N)(E,\VV_E) \ar[dd]_<<<<<<{\phi_*} &
  &
\Gamma^*(M)(E,n) \ar[dd]^<<<<<<{\phi_*} \ar[ll]^<<<<<<<<{\alpha_{(E,n)}} &
  & 
\Gamma^*(M)(E) \ar[dd]^<<<<<<{\phi_*} \ar[ll]^<<<<<<<<<<{p_n} 
\\
M(F,\VV_F)  \ar[rr]^>>>>>>>>>>{\Theta}|-{ \, \, \,}
\ar[dr]_{\Phi(\alpha)_{(F,\VV_F)}}&
  &
M(F,n\cdot \AAA^1_F)\ar[rr]|-{\, \, \,}^>>>>>>>>>>{\iota_n}&
  &
 \bigoplus_{m\in \ZZ} M(F,m\cdot \AAA^1_E) \ar[dr]^{\operatorname{mod} \eeta} & 
  \\
  &
\Gamma_*(N)(F,\VV_E) &
  &
\Gamma^*(M)(F,n)  \ar[ll]^<<<<<<<<{\alpha_{(F,n)}}&
  & 
\Gamma^*(M)(F) \ar[ll]^<<<<<<<<<<{p_n} \\}$
\end{center}
where the maps are defined as before. The left back square is commutative according to \ref{itm:R2a} and \ref{itm:R4a}. The left front square is commutative since $\alpha$ is compatible with $(d1)$. The remaining squares are commutative by definition. Hence the square (1) is commutative.
\par We prove compatibility with $\ref{itm:D2}$.
 Let $\psi:F\to E$ be a finite extension of fields and $\VV_F$ a virtual vector bundle of rank $n$ over $F$. Denote by $r$ the rank of $\Om_{E/k}$. We want to prove that the following diagram (2) is commutative:
\begin{center}
$\xymatrix@C=0pt@R=20pt{
M(E,\Om_{E/k}+\VV_E) \ar[dd]^<<<<<<{\psi^*} \ar@{}[rddd]|{(2)}
\ar[rr]^>>>>>>>>>>{\Theta}\ar[dr]^{\Phi(\alpha)_{(E,\Om_{E/k}+\VV_E)}} &
  &
 M(E,(r+n)\cdot \AAA^1_E)\ar[rr]^>>>>>>>>>>{\iota_{(r+n)}} \ar[dd]^<<<<<<{\psi^*}|-{\phantom{f} } &
  &
 \bigoplus_{m\in \ZZ} M(E,m\cdot \AAA^1_E) \ar[dd]|-{\phantom{f}}^<<<<<<{\psi^*} \ar[rd]^{\operatorname{mod} \eeta} & 
\\
  &
\Gamma_*(N)(E,\Om_{E/k}+\VV_E) \ar[dd]_<<<<<<{\psi^*} &
  &
\Gamma^*(M)(E,{(r+n)}) \ar[dd]^<<<<<<{\psi^*} \ar[ll]^<<<<<<<<{\alpha_{(E,r+n)}} &
  & 
\Gamma^*(M)(E) \ar[dd]^<<<<<<{\psi^*} \ar[ll]^<<<<<<<<<<{p_{(r+n)}} 
\\
M(F,\Om_{F/k}+\VV_F)  \ar[rr]^>>>>>>>>>>{\Theta}|-{ \, \, \,}
\ar[dr]_{\Phi(\alpha)_{(F,\Om_{F/k}+\VV_F)}}&
  &
M(F,(r+n)\cdot \AAA^1_F)\ar[rr]|-{\, \, \,}^>>>>>>>>>>{\iota_{(r+n)}}&
  &
 \bigoplus_{m\in \ZZ} M(F,m\cdot \AAA^1_E) \ar[dr]^{\operatorname{mod} \eeta} & 
  \\
  &
\Gamma_*(N)(F,\Om_{E/k}+\VV_E) &
  &
\Gamma^*(M)(F,{(r+n)})  \ar[ll]^<<<<<<<<{\alpha_{(F,r+n)}}&
  & 
\Gamma^*(M)(F) \ar[ll]^<<<<<<<<<<{p_{(r+n)}} \\}$

\end{center}
where the maps are defined as before. The left back square is commutative according to \ref{itm:R2b} and \ref{itm:R4a}. The left front square is commutative since $\alpha$ is compatible with $(d2)$. The remaining squares are commutative by definition. Hence the square (2) is commutative.

\par We prove compatibility with \ref{itm:D3}. Let $E$ be a field and $\VV_E$ a virtual vector space of dimension $n$ over $E$. Let $u\in E^{\times}$ be a unit and consider multiplication with $[u]\in \KMW(E,\AAA^1)$. We want to prove that the following diagram (3) is commutative:
\begin{center}
$\xymatrix@C=0pt@R=20pt{
M(E,\VV_E) \ar[dd]^<<<<<<{\gamma_{[u]}} \ar@{}[rddd]|{(3)}
\ar[rr]^>>>>>>>>>>{\Theta}\ar[dr]^{\Phi(\alpha)_{(E,\VV_E)}} &
  &
 M(E,n\cdot \AAA^1_E)\ar[rr]^>>>>>>>>>>{\iota_n} \ar[dd]^<<<<<<{\gamma_{[u]}}|-{\phantom{f} } &
  &
 \bigoplus_{m\in \ZZ} M(E,m\cdot \AAA^1_E) \ar[dd]|-{\phantom{f}}^<<<<<<{\gamma_{[u]}} \ar[rd]^{\operatorname{mod} \eeta} & 
\\
  &
\Gamma_*(N)(E,\VV_E) \ar[dd]_<<<<<<{\gamma_{[u]}} &
  &
\Gamma^*(M)(E,n) \ar[dd]^<<<<<<{\gamma_{[u]}} \ar[ll]^<<<<<<<<{\alpha_{(E,n)}} &
  & 
\Gamma^*(M)(E) \ar[dd]^<<<<<<{\gamma_{[u]}} \ar[ll]^<<<<<<<<<<{p_n} 
\\
M(E,\AAA^1_E+\VV_E)  \ar[rr]^>>>>>>>>>>{\Theta}|-{ \, \, \,}
\ar[dr]_{\Phi(\alpha)_{(E,\AAA^1_E+\VV_E)}}&
  &
M(E,(n+1)\cdot \AAA^1_E)\ar[rr]|-{\, \, \,}^>>>>>>>>>>{\iota_{n+1}}&
  &
 \bigoplus_{m\in \ZZ} M(E,m\cdot \AAA^1_E) \ar[dr]^{\operatorname{mod} \eeta} & 
  \\
  &
\Gamma_*(N)(E,\AAA^1_E+\VV_E) &
  &
\Gamma^*(M)(E,n+1)  \ar[ll]^<<<<<<<<{\alpha_{(E,n+1)}}&
  & 
\Gamma^*(M)(E) \ar[ll]^<<<<<<<<<<{p_{n+1}} \\}$

\end{center}
where the maps are defined as before. The left back square is commutative according to \ref{itm:R4a}. The left front square is commutative since $\alpha$ is compatible with $(d3)$. The remaining squares are commutative by definition. Hence the square (3) is commutative. The same proof works for the generator $\eeta$, hence $\Phi(\alpha)$ is compatible with \ref{itm:D3}.

\par We prove compatibility with \ref{itm:D4}. Let $E$ be a field over $k$, let $v$ be a valuation on $E$ and let $\mathcal{V}$ be a virtual projective {$\mathcal{O}_v$-module} of finite type. Denote by $\mathcal{V}_E=\VV \otimes_{\mathcal{O}_v} E$ and $\VV_{\kappa(v)}=\VV \otimes_{\mathcal{O}_v} \kappa(v)$. We want to prove that the following diagram (4) is commutative:
\begin{center}
\resizebox{18cm}{!}{
$\xymatrix@C=0pt@R=20pt{
M(E,\VV_E) \ar[dd]^<<<<<<{\partial_v} \ar@{}[rddd]|{(4)}
\ar[rr]^>>>>>>>>>>{\Theta}\ar[dr]^{\Phi(\alpha)_{(E,\VV_E)}} &
  &
 M(E,n\cdot \AAA^1_E)\ar[rr]^>>>>>>>>>>{\iota_n} \ar[dd]^<<<<<<{\partial_v}|-{\phantom{f} } &
  &
 \bigoplus_{m\in \ZZ} M(E,m\cdot \AAA^1_E) \ar[dd]|-{\phantom{f}}^<<<<<<{\partial_v} \ar[rd]^{\operatorname{mod} \eeta} & 
\\
  &
\Gamma_*(N)(E,\VV_E) \ar[dd]_<<<<<<{\partial_v} &
  &
\Gamma^*(M)(E,n) \ar[dd]^<<<<<<{\partial_v} \ar[ll]^<<<<<<<<{\alpha_{(E,n)}} &
  & 
\Gamma^*(M)(E) \ar[dd]^<<<<<<{\partial_v} \ar[ll]^<<<<<<<<<<{p_n} 
\\
M(\kappa(v),-\NN_v+\VV_{ \kappa(v)})  \ar[rr]^>>>>>>>>>>{\Theta}|-{ \, \, \,}
\ar[dr]_{\Phi(\alpha)_{(F,\VV_F)}}&
  &
M(\kappa(v),(n-1)\cdot \AAA^1_{\kappa(v)})\ar[rr]|-{\, \, \,}^>>>>>>>>>>{\iota_{n-1}}&
  &
 \bigoplus_{m\in \ZZ} M(\kappa(v),m\cdot \AAA^1_{\kappa(v)}) \ar[dr]^{\operatorname{mod} \eeta} & 
  \\
  &
\Gamma_*(N)(\kappa(v),-\NN_v+\VV_{ \kappa(v)}) &
  &
\Gamma^*(M)(\kappa(v),n-1)  \ar[ll]^<<<<<<<<{\alpha_{(\kappa(v),n-1)}}&
  & 
\Gamma^*(M)(\kappa(v)) \ar[ll]^<<<<<<<<<<{p_{n-1}} \\}$
 }

\end{center}
where the maps are defined as before. The left back square is commutative according to \ref{itm:R4a}. The left front square is commutative since $\alpha$ is compatible with $(d4)$. The remaining squares are commutative by definition. Hence the square (4) is commutative.

\par Thus, we have defined a map
\begin{center}
$\Phi:\Hom_{\CatM_k}(\Gamma^*(M),N)\to \Hom_{\CatMW_k}(M,\Gamma_*(N))$ \\
$\alpha\mapsto \Phi(\alpha)$
\end{center}
which is natural in $M$ and $N$.
\par  Let $M$ be a Milnor-Witt cycle module and $N$ a Rost cycle module, we now define a map
\begin{center}
$\Psi:\Hom_{\CatMW_k}(M,\Gamma_*(N)) \to\Hom_{\CatM_k}(\Gamma^*(M),N) $.
\end{center}
Let $\beta$ be an element of $\Hom_{\CatMW_k}(M,\Gamma_*(N))$. Let $E$ be a field. By definition, we have maps\footnote{The fact that $\beta$ commutes with \ref{itm:D1} implies that these maps depend only on the rank $n$ of $\VV_E$.}\begin{center}
$\beta_{(E,n)}:M(E,\VV_E) \to N(E,n)$
\end{center}
for any virtual vector space $\VV_E$ of rank $n$ over $E$. These maps are compatible with data $\ref{itm:D1}$, $\ref{itm:D2}$, $\ref{itm:D3}$ and $\ref{itm:D4}$ according to Definition \ref{DefMWmorphisms}.
 Thus, we can define a map
\begin{center}
$\bigoplus_{n\in \ZZ} \beta_{(E,n)}:
\bigoplus_{n\in \ZZ} M(E,n\cdot \AAA^1_E)\to 
\bigoplus_{n\in \ZZ} N(E,n)$\end{center}
which can be factorized as
\begin{center}
$\Psi(\beta)_E:\Gamma^*(M(E)) \to N(E)=\bigoplus_{n\in \ZZ} N(E,n)$
\end{center}
since $\beta$ commutes with the $\KMW$-action and $\eeta$ acts trivially on the Milnor-Witt cycle module $\Gamma_*(N)$ by definition. Since $\beta$ commutes with data $\ref{itm:D1}$, $\ref{itm:D2}$, $\ref{itm:D3}$ and $\ref{itm:D4}$, we can prove that the map ${\Psi(\beta):E\mapsto \Psi(\beta)_E}$ is a morphism of Rost cycle modules.
\par Thus, we have defined a map
\begin{center}
$\Psi:\Hom_{\CatMW_k}(M,\Gamma_*(N)) \to\Hom_{\CatM_k}(\Gamma^*(M),N) $
\\ $\beta \mapsto \Psi(\beta)$
\end{center}
which is natural in $M$ and $N$.
\par Since the map $\Psi$ is inverse to $\Phi$, we conclude that the two functors $\Gamma^*$ and $\Gamma_*$ form an adjunction ${ \Gamma^*\dashv \Gamma_*}$ between the category of Milnor-Witt cycle modules and the category of classical cycle modules:
\begin{center}
$\Gamma^*:\mathfrak{M}^{\operatorname{MW}}_k \rightleftarrows \mathfrak{M}^{\operatorname{M}}_k:\Gamma_*$.

\end{center}

\end{proof}

\appendix
\section{Virtual Objects}\label{VirtualObj}

In this section, we closely follow \cite[§4]{Deligne87} and show how to construct the category $\mathfrak{V}(\mathcal{A})$ of virtual objects from an exact category $\mathcal{A}$.
\begin{Par}

Recall that an exact category is an additive category $\mathcal{A}$ equipped with a set of exact sequences $X\rightarrowtail Y \twoheadrightarrow Z$ satisfying various axioms (see \cite[Definition 2.1]{Buhler10}). Our main example consists in the category of vector bundles over a scheme $X$ (with locally split short exact sequences).
\end{Par}
\begin{Par}
A (commutative) Picard category is a non-empty category $\mathcal{P}$ where all arrows are isomorphisms, with a functor $+:\mathcal{P}\times \mathcal{P}\to \mathcal{P}$ following some associativity and commutativity constraints, and such that for any object $P$, the two functors $X\mapsto X+P$ and $X\mapsto P+X$ are autoequivalences of $\mathcal{P}$.
\par One can show that there is a zero object $0$ (unique up to unique isomorphism) and that any object $X$ has an inverse $-X$ (unique up to unique isomorphism) such that $X+(-X)\simeq 0$.
The commutativity condition gives switch isomorphisms $X+Y\simeq Y+X$ compatible with the associative data. Beware that these switch isomorphisms are not identities in general.
\par The functor $X\mapsto -X$ is compatible with $+$ as follows: the isomorphism $(X+Y)+((-X)+(-Y))\simeq (X+(-X))+(Y+(-Y))\simeq 0+0\simeq 0$ is such that $(-X)+(-Y)$ is (isomorphic to) $-(X+Y)$. Nevertheless, beware that the diagram
\begin{center}

$\xymatrix{
((-X)+X)+(-X) \ar[rr]^\simeq \ar[d]^\simeq&   &(-X)+(X+(-X)) \ar[d]^\simeq \\
0+(-X) \ar[r]^\simeq & (-X) \ar[r]^\simeq & (-X)+0
}$
\end{center}
is not commutative.
\end{Par}
\begin{Exe} [Graded line bundles, see also \cite{Fasel18bis}, §1.3]
\label{ExGradedLines}
Let $S$ be a scheme and $\mathcal{L}(S,\ZZ)$ be the category of graded line bundles over $S$. An object of this category is a pair $(a,L)$ where $L$ is a line bundle (an invertible $\mathcal{O}_X$-module) and $a$ is a locally constant integer (an element of $\Gamma(S,\ZZ)$). A map $(a,L)\to (a',L')$ between two such objects is an isomorphism $L\to L'$ of line bundles with the data $a=a'$. By definition, we have $(a,L)+(a',L')=(a+a',L\otimes L')$. The associativity constraint is deduced from the one ruling the tensor product and the commutativity constraint $(a,L)+(a',L')\simeq (a',L')+(a,L)$ is given by $l\wedge l' \mapsto (-1)^{aa'}l'\wedge l$ (also called {\em Koszul sign convention}). The unit object $0$ is $(0,\mathcal{O}_X)$. For any object $(a,L)$, there is an isomorphism $L\otimes L^{\vee}\simeq \mathcal{O}_X$ induced by the pairing $L\times L^{\vee}\to \mathcal{O}_X, (l,\phi)\mapsto \phi(l)$ (where $L^{\vee}$ denotes the dual of $L$). Hence the following isomorphism:
\begin{center}

$(a,L)+(-a,L^{\vee})\simeq (0,\mathcal{O}_X)$.
\end{center}
\end{Exe}
\begin{Par} Let $\mathcal{A}$ be an exact category. We construct the category $\mathfrak{V}(\mathcal{A})$ of virtual objects of $A$. Denote by $\mathcal{A}_{iso}$ the category with the same objects as $\mathcal{A}$ but with morphisms the isomorphisms of $\mathcal{A}$. Let $\mathcal{P}$ be a Picard category. Consider the functors $[-]:\mathcal{A}_{iso}\to \mathcal{P}$ with data \ref{itm:(a)}, \ref{itm:(b)} and rules \ref{itm:(c)}, \ref{itm:(d)}, \ref{itm:(e)} described as follows:

\begin{description}
\item[\namedlabel{itm:(a)}{(a)}](additivity) For any short exact sequence $\Sigma:A'\rightarrowtail A \twoheadrightarrow A''$, we have a morphism $[\Sigma]:[A]\to [A']+[A'']$, functorial in morphisms of short exact sequences.
\item[\namedlabel{itm:(b)}{(b)}] If $0$ is a zero object in $\mathcal{A}$, then we have an isomorphism $[0]\to 0$.
\item[\namedlabel{itm:(c)}{(c)}] Let $\phi:A\to B$ be an isomorphism in $\mathcal{A}$. Denote by $\Sigma$ the short exact sequence $0\to A\to B$ (resp. $A\to B \to 0$). Then $[\phi]$ (resp. $[\phi]^{-1}$) is the composite
\begin{center}

$\xymatrix{
[A] \ar[r]^-{[\Sigma]} & [0]+[B] \ar[r]^-{(b)} & [B]
}$
\end{center}
(resp.
\begin{center}

$\xymatrix{
[B] \ar[r]^-{[\Sigma]} & [B]+[0] \ar[r]^-{(b)} & [A]).
}$
\end{center}

\item[\namedlabel{itm:(d)}{(d)}](associativity) Consider the following filtration $C\supset B \supset A \supset 0$. Then the diagram (with morphisms given by \ref{itm:(a)})
\begin{center}
$\xymatrix{
[C] \ar[r] \ar[d] & [A]+[C/A] \ar[d] 
\\
[B]+[C/B] \ar[r] &[A]+[B/A]+[C/B]
}$
\end{center}
is commutative.

\item[\namedlabel{itm:(e)}{(e)}](\ref{itm:(a)} is additive) For any $A=A'\oplus A''$, the short exact sequences $\Sigma:A'\to A \to A''$ and $\Sigma':A''\to A \to A'$ give a commutative triangle
\begin{center}

$\xymatrix{
[A']+[A''] \ar[rr] \ar[rd]_{[\Sigma]} & & [A'']+[A'] \ar[ld]^{[\Sigma']} \\
 & [A].
 }$
\end{center}
\end{description}
For any commutative Picard category $\mathcal{P}$, the functors 
[$-]:\mathcal{A}\to \mathcal{P}$ satisfying the conditions \ref{itm:(a)},...,\ref{itm:(e)} define a category denoted by $[[-]]_\mathcal{P}$. One can prove that there is a (commutative) Picard category $\mathfrak{V}(A)$ with functors $[-]_{\mathcal{A}}:\mathcal{A}_{iso}\to \mathfrak{V}(A)$ satisfying the conditions \ref{itm:(a)},...,\ref{itm:(e)} which is universal in the sense that for any Picard category $\mathcal{P}$ with functors $[-]:\mathcal{A}_{iso}\to \mathcal{P}$ satisfying \ref{itm:(a)},...,\ref{itm:(e)}, the category $[[-]]_\mathcal{P}$ is equivalent to the category of additive functors $\mathfrak{V}(A)\to \mathcal{P}$. By definition, $\mathfrak{V}(A)$ is the category of virtual objects of $\mathcal{A}$.

\end{Par}
\begin{Par}

Let $T:\mathcal{A}\to \mathcal{B}$ be an exact functor between two exact categories (it respects short exact sequences). With the previous notations, the composition $[-]\circ T:\mathcal{A}_{iso}\to \mathfrak{V}(\mathcal{B})$ is a functor satisfying \ref{itm:(a)},...,\ref{itm:(e)} thus it induces an additive functor $\mathfrak{V}(\mathcal{A})\to \mathfrak{V}(\mathcal{B})$.
\end{Par}
\begin{Par}

Let $S$ be a scheme and denote by $\operatorname{Vect}(S)$ the category of vector bundle over $S$. We put $\mathfrak{V}(S)=\mathfrak{V}(\operatorname{Vect}(S))$ (also denoted by $\underline{K}(S)$ in \cite[§4]{Deligne87}). Any scheme morphism $f:X\to S$ defines a pullback $f^*
:\mathfrak{V}(S)\to \mathfrak{V}(X)$. 
\end{Par}
\begin{Par}
Let $S$ be a scheme and $V$ be a vector bundle over $S$. The rank $\rk(V)$ of $V$ is a locally constant integer on $S$. The determinant $\det(V)$ of $V$ is the line bundle $\Lambda^{\rk(V)}(V)$. We also call determinant of $V$ (and write $\Det(V)$) the graded line bundle $(\rk(V),\det(V))$. For any short exact sequence
\begin{center}

$V' \rightarrowtail V \twoheadrightarrow V''$,
\end{center}
we have an isomorphism
\begin{center}

$\det(V')\otimes \det(V'')\to \det(V)$
\end{center}
illustrated by the symbols
\begin{center}

$(e'_1\wedge\dots \wedge e'_n)\otimes (e''_1\wedge \dots \wedge e''_m) \mapsto e'_1\wedge\dots \wedge e'_n \wedge \tilde{e}_1''\wedge \dots \wedge \tilde{e}''_m$.
\end{center}
This induces an isomorphism between the associated line bundles which satisfies conditions \ref{itm:(a)},...,\ref{itm:(e)}. The universal property gives a factorization of $\Det:\operatorname{Vect}(S)\to \mathcal{L}(S,\ZZ)$ through a functor $\mathfrak{V}(S)\to \mathcal{L}(S,\ZZ)$ also denoted by $\Det$. 

\par Note in particular that for a vector bundle $V=V'\oplus V''$, the two short exact sequences give the following commutative diagram:
\begin{center}

$\xymatrix{
\det(V')\otimes \det(V'') \ar[rr]^{(-1)^{nm}} \ar[rd]^\simeq & & \det(V'')\otimes \det(V') \ar[ld]_\simeq \\
 & \det(V) & 
 }$
\end{center}
where $n,m$ are the rank of $V',V''$, respectively.
\par 
If $\mathcal{V}$ is a virtual vector bundle over $S$ (i.e. an object of $\mathfrak{V}(S)$), then we can write the graded line bundle $\Det(\mathcal{V})$ as
 $(\rk(\mathcal{V}),\det(\mathcal{V}))$ so that we can define the rank and the determinant of $\mathcal{V}$ in an obvious way.
 \par In this article, we are mostly interested in the case where $S=\Spec(A)$ is a local scheme. One can then prove that the functor $\Det:\mathfrak{V}(S)\to \mathcal{L}(S,\ZZ)$ is an equivalence. 
\end{Par}

%
%
%
%
%
%
%
%
%
%

  \bibliographystyle{alpha}
  \bibliography{BiblioFile3}


\end{document}